\def\titlerunning#1{\gdef\titrun{#1}}
\def\author#1{\gdef\autrun{\def\and{\unskip, }#1}\gdef\@author{#1}}
\def\address#1{{\def\and{\\\hspace*{18pt}}\renewcommand{\thefootnote}{}%
		\footnote {#1}}%
	\markboth{\autrun}{\titrun}}
\def\email#1{e-mail: #1}
\def\subjclass#1{{\renewcommand{\thefootnote}{}%
		\footnote{\emph{Mathematics Subject Classification (2020):} #1}}}
\def\keywords#1{\par\medskip
	\noindent\textbf{Keywords.} #1}
\newtheorem{theorem}{Theorem}[section]
\newtheorem{corollary}[theorem]{Corollary}
\newtheorem{lemma}[theorem]{Lemma}
\newtheorem{proposition}[theorem]{Proposition}
\theoremstyle{definition}
\newtheorem{definition}[theorem]{Definition}
\newtheorem{remark}[theorem]{Remark}
\numberwithin{equation}{section}
\newtheorem{conjecture}{Conjecture}
\def \a {\alpha }
\def \b {\beta}
\def \de {\delta}
\def \De {\Delta}
\def \la {\lambda}
\def \La {\Lambda}
\def\w {\omega}
\def\Om{\Omega}
\def\na {\nabla}
\begin{document}
	\baselineskip=17pt
	
	\titlerunning{Decomposition of the Curvature Operator and Applications to the Hopf Conjecture }
	\title{Decomposition of the Curvature Operator and Applications to the Hopf Conjecture }
	
\author{Teng Huang and Weiwei Wang}

\date{}

\maketitle

\address{Teng Huang: School of Mathematical Sciences, CAS Key Laboratory of Wu Wen-Tsun Mathematics, University of Science and Technology of China, Hefei, Anhui, 230026, People’s Republic of China; \email{htmath@ustc.edu.cn;htustc@gmail.com}}
\address{Weiwei Wang: School of Mathematical Sciences, University of Electronic Science and Technology of China, Chengdu, Sichuan, 611731, People’s Republic of China; \email{wawnwg123@163.com}}
	\subjclass{53C20;53C21;58A10;58A14}

	\begin{abstract}
In this article, we investigate the interplay between the curvature operator, Weyl curvature, and the Hopf conjecture on compact Riemannian manifolds of even dimension. By decomposing the curvature operator into Hermitian components, we develop eigenvalue criteria for sectional curvature and prove vanishing theorems for Betti numbers under integral bounds on the Weyl tensor.  Our results confirm the Hopf conjecture for manifolds with sufficiently small Weyl curvature, including locally conformally flat cases, and provide new rigidity theorems under harmonic Weyl curvature conditions.
\end{abstract}
	\keywords{Hopf conjecture, curvature operator, Weyl curvature, $k$th-intermediate Ricci curvature, Betti numbers, Euler characteristic, Yamabe constant.}
\section{Introduction}
One of the most famous conjectured relationships between sectional curvature and topological invariants was formulated by H. Hopf in the 1930s. Hopf posed the fundamental question of whether the sign of the sectional curvature governs the Euler characteristic of a compact Riemannian manifold.
\begin{conjecture}(Hopf Conjecture)
Let $(M,g)$ be a compact Riemannian manifold of even dimension $2n$ ($n \geq 2$) with sectional curvature $\sec_g$. Then the following implications hold:
\begin{equation*}
\begin{split}
(-1)^{n}\chi(M)\geq0\ & if\ \sec_{g}\leq0,\\
(-1)^{n}\chi(M)>0\ & if\ \sec_{g}<0,\\
\chi(M)=0\ & if\ \sec_{g}=0,\\
\chi(M)\geq0\  & if\ \sec_{g}\geq0,\\
\chi(M)>0\ & if\ \sec_{g}>0.\\
\end{split}
\end{equation*}	
\end{conjecture}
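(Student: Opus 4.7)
\medskip\noindent\textbf{Proof proposal.} Since the statement is Hopf's conjecture itself, open for roughly ninety years, I do not expect a complete proof and will only sketch the strategy that any serious attack must engage, in a form compatible with the decomposition of the curvature operator announced in the abstract.

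The canonical starting point is the Chern--Gauss--Bonnet theorem, which writes
\[
\chi(M)=\frac{1}{(2\pi)^{n}\,2^{n}n!}\int_{M}\mathrm{Pf}(R)\,d\mathrm{vol}_{g},
\]
with $\mathrm{Pf}(R)$ the Pfaffian of the Riemann curvature viewed as a skew-symmetric endomorphism of $\Lambda^{2}TM$. The ideal outcome would be the pointwise Hopf inequality: $\sec_{g}\geq 0$ at $p$ should force $\mathrm{Pf}(R_{p})\geq 0$, and similarly for the other sign conditions. For $2n=4$ this is the elementary Chern--Milnor computation, which settles the conjecture in dimension four. My first step would therefore be to look for a dimension-free version of that computation.

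The main obstacle is that the pointwise Hopf inequality is false in dimensions $2n\geq 6$: Geroch produced algebraic curvature tensors with $\sec\geq 0$ but $\mathrm{Pf}<0$, and Bourguignon--Karcher and others extended these counterexamples. Hence any approach must either replace pointwise positivity by an averaged statement, exploiting integration against $d\mathrm{vol}_{g}$, or restrict the algebraic type of the curvature operator. My plan, guided by the abstract, is to decompose the curvature operator into its scalar, traceless-Ricci and Weyl components, use Hermitian decompositions of each block to convert sectional curvature bounds into spectral inequalities on the individual pieces, and finally expand $\mathrm{Pf}(R)$ as a polynomial in these eigenvalues and isolate the Weyl-dependent corrections, attempting to control them by a smallness hypothesis on $\|\mathcal{W}\|$ in a suitable $L^{p}$ norm or by the locally conformally flat assumption $\mathcal{W}\equiv 0$.

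A complementary route I would run in parallel is Bochner's method: under strict positivity hypotheses, Weitzenb\"ock formulae on $k$-forms with an intermediate Ricci tensor as curvature term force $b_{k}(M)=0$ for $0<k<2n$, giving $\chi(M)\geq 2>0$ and handling the $\sec_{g}>0$ case; in the $\sec_{g}\equiv 0$ case Cheeger--Gromoll produces a flat finite cover and $\chi(M)=0$ follows. The single step I truly do not see how to complete, and which I expect will force the paper to impose auxiliary hypotheses on the Weyl tensor, is controlling the sign of the Pfaffian when $\mathcal{W}$ is large; that is exactly the Geroch obstruction, and the abstract's phrase ``sufficiently small Weyl curvature'' is a candid acknowledgment of it.
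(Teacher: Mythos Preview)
Your assessment is correct: the statement is the Hopf Conjecture itself, and the paper does not prove it. It is stated as Conjecture~1 in the introduction and remains open; the paper establishes only partial results under additional hypotheses (locally conformally flat, or pointwise/integral smallness of the Weyl tensor relative to a sectional or intermediate-Ricci lower bound). There is therefore no ``paper's proof'' to compare your proposal against.

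Your sketch is accurate and in fact anticipates the paper's actual strategy rather closely. The decomposition you describe---splitting $\mathfrak{R}$ into its scalar, traceless-Ricci, and Weyl blocks and converting sectional bounds into spectral inequalities---is precisely Theorem~1.2 and Proposition~3.3 of the paper. Your ``complementary Bochner route'' is the Petersen--Wink machinery the paper invokes for its Betti-number vanishing theorems (Theorems~1.4, 1.5, Corollary~1.6). And your diagnosis of the Geroch obstruction as the reason auxiliary Weyl hypotheses are needed is exactly right: Theorem~1.3 requires $|\mathcal{W}_g|<\sqrt{\frac{n(n-1)}{(n+1)(n-2)}}\,a$ pointwise to force positivity of $\mathfrak{R}$ and then appeals to the classical Kulkarni/Bourguignon--Karcher result on the sign of the Pfaffian. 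The one point where the paper is slightly more concrete than your outline is that it never attempts to expand $\mathrm{Pf}(R)$ directly; instead it uses the eigenvalue estimates to reduce to the already-known implication ``$\mathfrak{R}$ positive (resp.\ negative) $\Rightarrow$ correct sign of $\chi$''.
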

The conjecture holds for dimensions $2$ and $4$, as the Gauss–Bonnet integrands in these low-dimensional cases exhibit the required sign behavior (see Bishop and Goldberg \cite{BG64} or Chern \cite{Che55}). However, in higher dimensions, it has been established that the sectional curvature's sign does not universally determine the sign of the Gauss-Bonnet-Chern integrand \cite{Ger76}. In the 1990s, K. Grove initiated a research program to investigate the positive sectional curvature case of this conjecture. His approach centered on examining positively curved metrics admitting large isometry groups. This strategy has yielded significant results (see, e.g., \cite{FR04,Ken13,RS05,Wil03}) and remains an active area of research in Riemannian geometry. The conjecture was established by Kennard, Wiemeler, and Wilking under the stronger hypothesis that the isometry group has rank at least five \cite{KWW21}. Subsequently, Nienhaus \cite{Nie22} weakened this assumption, proving the result under the existence of an isometric $T^{4}$-action, thereby improving upon the earlier work requiring a $T^{5}$-action.

For the case of negative sectional curvature, a vanishing theorem was established in \cite{JX00}, showing that the space of $L^{2}$ $k$-forms vanishes for $k$ in a specific range determined by the curvature's pinching constants. Under sufficiently strong pinching conditions, this result provides an affirmative answer to Hopf's question in this setting. Their work improves upon earlier results by Donnelly and Xavier \cite{DX84}. The Hopf Conjecture under pinched sectional curvature is also addressed in \cite{BK78}. The use of $L^{2}$-cohomology as a tool to study this conjecture was initially proposed by Dodziuk \cite{Dod79} and Singer \cite{Sin77}. Gromov \cite{Gro91} later advanced this program in the K\"{a}hler setting, proving the conjecture for K\"{a}hler manifolds that are homotopy equivalent to closed manifolds with strictly negative sectional curvature. To further address the Hopf conjecture for K\"{a}hler manifolds with nonpositive sectional curvature, Cao–Xavier \cite{CX01} and Jost–Zuo \cite{JZ00} independently extended Gromov’s approach by introducing the notion of K\"{a}hler parabolicity. This class includes all closed nonpositively curved K\"{a}hler manifolds, and they demonstrated that such manifolds indeed satisfy the expected sign conditions on their Euler characteristics.

In this article, we investigate the Hopf conjecture for compact Riemannian manifolds under the assumption that the Weyl curvature is sufficiently small in an appropriate sense. Our approach begins by establishing fundamental relationships between the eigenvalues of the curvature operator, sectional curvatures, and Weyl curvature.

For the Riemannain curvature tensor $Rm$, there is a decomposition formula (cf. \cite{Cha06,Pet16}):
\begin{equation*}
Rm=S_{g}\odot g+\mathcal{W}_{g},
\end{equation*}
where $\mathcal{W}_{g}$ denotes the Weyl curvature tensor of the metric $g$, $S_{g}$ represents the Schouten tensor, defined by
\begin{equation*}
S_{g} = \frac{1}{n-2}\left(Ric_{g} - \frac{R_{g}}{2(n-1)} \cdot g\right),
\end{equation*}
with $\text{Ric}_{g}$ and $R_{g}$ being the Ricci tensor and scalar curvature of $g$, respectively. The associated algebraic curvature operator $\mathfrak{R}: \Lambda^{2}TM \rightarrow \Lambda^{2}TM$ is defined via the relation:
$$g(\mathfrak{R}(X\wedge Y),Z\wedge W)=Rm(X,Y,Z,W).$$

We introduce two auxiliary curvature operators $\mathfrak{S}$ and $\mathfrak{W}$, corresponding to $S_{g} \odot g$ and $\mathcal{W}_{g}$ respectively, defined by:
$$g(\mathfrak{S}(X\wedge Y),Z\wedge W)=(S_{g}\odot g)(X,Y,Z,W)$$ and$$ g(\mathfrak{W}(X\wedge Y),Z\wedge W)=\mathcal{W}_{g}(X,Y,Z,W).$$
\begin{theorem}\label{T3}
	Let $(M,g)$ be a compact manifold of dimension $n$, $n\geq4$, and let $\{e_{i}\}_{1,\cdots,n}$ be an orthonormal eigenbasis for the Ricci curvature tensor. Then the curvature operator $\mathfrak{R}$ satisfies:
	\begin{equation*}
	\begin{split}
	&\langle\mathfrak{R}(e_{i}\wedge e_{j}),(e_{i}\wedge e_{j})\rangle=\la_{ij}+\mathcal{W}_{ijij}=Rm(e_{i},e_{j},e_{i},e_{j}),\\
	&\langle\mathfrak{R}(e_{i}\wedge e_{j}),(e_{k}\wedge e_{l})\rangle=\mathcal{W}_{ijkl},\forall (i,j)\neq (k,l),\\
	\end{split}
	\end{equation*}
	where $\lambda_{ij}=\frac{1}{n-2}\left(\lambda_{i} + \lambda_{j} - \frac{1}{n-1}R_{g}\right)$ and $i<j$, $k<l$. Consequently, we have the decomposition $$\mathfrak{R} = \mathfrak{S} + \mathfrak{W},$$
		where
		$$\mathfrak{S}=\left(\begin{array}{ccc}\la_{12} & \cdots & 0 \\ \vdots & \ddots & \vdots \\0 & \cdots & \la_{(n-1)n}\end{array}\right)$$
		and
		$$\mathfrak{W}=\left(\begin{array}{ccc}\mathcal{W}_{1212} &\cdots  & \mathcal{W}_{ijkl} \\ \vdots &\ddots & \vdots \\ \mathcal{W}_{ijkl}  & \cdots & \mathcal{W}_{(n-1)n(n-1)n}\end{array}\right).$$
\end{theorem}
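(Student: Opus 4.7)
The plan is to work directly from the decomposition $Rm = S_g \odot g + \mathcal{W}_g$ and compute both sides on the pairs $(e_i \wedge e_j, e_k \wedge e_l)$ with $i<j$, $k<l$. Since the whole statement follows by expanding the Kulkarni-Nomizu part in the chosen basis, the real work is to verify that $\mathfrak{S}$ is diagonal with entries $\lambda_{ij}$.

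First I would observe that because $\{e_i\}$ diagonalizes $Ric_g$ with eigenvalues $\lambda_i$, and the Schouten tensor $S_g = \frac{1}{n-2}(Ric_g - \frac{R_g}{2(n-1)}g)$ is a linear combination of $Ric_g$ and $g$, the same basis diagonalizes $S_g$ with eigenvalues
\[
s_i := \frac{1}{n-2}\left(\lambda_i - \frac{R_g}{2(n-1)}\right).
\]
So $S_g(e_a, e_b) = s_a \delta_{ab}$.

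Next I would expand the Kulkarni-Nomizu product: for any symmetric $2$-tensor $A$,
\[
(A \odot g)(X,Y,Z,W) = A(X,Z)g(Y,W) + A(Y,W)g(X,Z) - A(X,W)g(Y,Z) - A(Y,Z)g(X,W).
\]
Substituting $A = S_g$ and the basis vectors $X=e_i$, $Y=e_j$, $Z=e_k$, $W=e_l$ and using $g(e_a, e_b) = \delta_{ab}$ together with $S_g(e_a, e_b) = s_a \delta_{ab}$ yields
\[
(S_g \odot g)(e_i, e_j, e_k, e_l) = s_i \delta_{ik}\delta_{jl} + s_j \delta_{jl}\delta_{ik} - s_i \delta_{il}\delta_{jk} - s_j \delta_{jk}\delta_{il}.
\]
Now I would split into two cases under the ordering assumption $i<j$, $k<l$. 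If $(i,j)=(k,l)$, then $\delta_{ik}\delta_{jl}=1$ and $\delta_{il}=\delta_{jk}=0$ (the latter because $i<j=l$ forces $i\neq l$ and $j>i=k$ forces $j\neq k$), giving $(S_g\odot g)(e_i,e_j,e_i,e_j) = s_i + s_j = \lambda_{ij}$. If $(i,j)\neq(k,l)$, the first pair of terms vanishes; the second pair requires $i=l$ and $j=k$, which is incompatible with $i<j=k<l=i$. Hence all four terms vanish, so $\mathfrak{S}$ is diagonal with entries $\lambda_{ij}$ in the basis $\{e_i\wedge e_j\}_{i<j}$.

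Finally, I would add the Weyl contribution: since $Rm = S_g\odot g + \mathcal{W}_g$, applying the definition of $\mathfrak{R}$ gives the two displayed identities of the theorem, and the matrix decomposition $\mathfrak{R}=\mathfrak{S}+\mathfrak{W}$ is immediate. The only conceivable obstacle is a sign or index bookkeeping slip in the Kulkarni–Nomizu expansion, but the ordering $i<j$, $k<l$ eliminates the ambiguous cases cleanly, so the argument should go through with no genuine difficulty beyond careful expansion.
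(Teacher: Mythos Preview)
Your proposal is correct and follows essentially the same approach as the paper: both arguments reduce to expanding $(S_g\odot g)(e_i,e_j,e_k,e_l)$ in the Ricci eigenbasis and observing that the Kulkarni--Nomizu term is diagonal with entries $\lambda_{ij}$, after which the Weyl part gives the off-diagonal entries of $\mathfrak{R}$. The only cosmetic difference is that you first diagonalize $S_g$ itself (with eigenvalues $s_i$) and then expand $S_g\odot g$, whereas the paper expands $Ric_g\odot g$ first and then subtracts the scalar curvature term; the computations are identical in substance.
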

Building upon Theorem \ref{T3}, we establish the validity of the Hopf conjecture for compact manifolds with suitably controlled Weyl curvature. The simplest case is naturally when the Weyl tensor is zero, i.e., the manifold is conformally locally flat. We observe that in this case, the curvature operator can be diagonalized, and its eigenvalues are controlled by the sectional curvature of the manifold (see Corollary \ref{C5} and Proposition \ref{P1}). Therefore, we can immediately confirm the Hopf conjecture when the Weyl curvature vanishes.
\begin{theorem}(=Theorem \ref{T4})
	The Hopf conjecture holds for all compact locally conformally flat manifolds of even dimension $2n$ ($n \geq 2$).
\end{theorem}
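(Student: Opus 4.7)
The plan is to reduce the Hopf conjecture in this setting to a pointwise sign analysis of the Chern--Gauss--Bonnet integrand, exploiting the full force of Theorem \ref{T3}. Since $\mathcal{W}_{g}\equiv 0$ on a locally conformally flat manifold, Theorem \ref{T3} shows that at every point $p\in M$, in a Ricci eigenbasis $\{e_{i}\}_{i=1}^{2n}$, the curvature operator $\mathfrak{R}$ is diagonal on $\Lambda^{2}T_{p}M$ with eigenvalues $\lambda_{ij}=\sec_{g}(e_{i},e_{j})$. Consequently, for an arbitrary unit simple bivector $u\wedge v=\sum_{i<j}c_{ij}e_{i}\wedge e_{j}$ (so $\sum c_{ij}^{2}=1$), one has $\sec_{g}(u,v)=\sum_{i<j}c_{ij}^{2}\lambda_{ij}$, a convex combination of the $\lambda_{ij}$. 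Thus the global hypothesis on the sign of $\sec_{g}$ is equivalent to the corresponding pointwise sign condition on every $\lambda_{ij}$.

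The second step is to evaluate the Chern--Gauss--Bonnet integrand under this diagonal structure. Writing the curvature $2$-form as $\Omega^{i}{}_{j}=\tfrac{1}{2}R_{ijkl}\,e^{k}\wedge e^{l}$, the vanishing of $R_{ijkl}$ whenever $\{i,j\}\neq\{k,l\}$ collapses this to $\Omega^{i}{}_{j}=\lambda_{ij}\,e^{i}\wedge e^{j}$. Substituting into the Pfaffian
$$\mathrm{Pf}(\Omega)=\frac{1}{2^{n}n!}\sum_{\sigma\in S_{2n}}\mathrm{sgn}(\sigma)\,\Omega^{\sigma(1)}{}_{\sigma(2)}\wedge\cdots\wedge\Omega^{\sigma(2n-1)}{}_{\sigma(2n)},$$
each permutation contributes $\lambda_{\sigma(1)\sigma(2)}\cdots\lambda_{\sigma(2n-1)\sigma(2n)}\,dV_{g}$, because the two occurrences of $\mathrm{sgn}(\sigma)$---one from the Pfaffian prefactor, one from reordering $e^{\sigma(1)}\wedge\cdots\wedge e^{\sigma(2n)}$ into $e^{1}\wedge\cdots\wedge e^{2n}$---cancel. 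Collecting the $2^{n}n!$ permutations inducing the same perfect matching yields
$$\mathrm{Pf}(\Omega)=\Bigl(\sum_{P\in\mathcal{M}_{2n}}\prod_{\{i,j\}\in P}\sec_{g}(e_{i},e_{j})\Bigr)dV_{g},$$
where $\mathcal{M}_{2n}$ denotes the set of $(2n-1)!!$ perfect matchings of $\{1,\ldots,2n\}$. So, up to a universal positive constant, $\chi(M)$ equals the integral over $M$ of this sum of $n$-fold products of sectional curvatures.

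The five clauses of the Hopf conjecture now follow by elementary sign analysis: if $\sec_{g}\geq 0$ (resp.\ $>0$), each product is $\geq 0$ (resp.\ $>0$) pointwise, so $\chi(M)\geq 0$ (resp.\ $>0$); if $\sec_{g}\leq 0$ (resp.\ $<0$), each $n$-fold product carries a uniform sign $(-1)^{n}$, giving $(-1)^{n}\chi(M)\geq 0$ (resp.\ $>0$); and if $\sec_{g}\equiv 0$, every $\lambda_{ij}$ vanishes and the integrand is identically zero, so $\chi(M)=0$. The only genuine obstacle is the Pfaffian bookkeeping in the second step---the cancellation of signs between the permutation and the wedge reordering, and the correct counting of matching multiplicities---but this is a straightforward combinatorial verification. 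Everything else reduces to standard ingredients: Chern--Gauss--Bonnet for the topological reduction, Theorem \ref{T3} for the diagonalization, and the convex-combination remark above for the equivalence between sign conditions on $\sec_{g}$ and on the $\lambda_{ij}$.
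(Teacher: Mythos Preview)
Your argument is correct. Both you and the paper begin from the same diagonalization: Theorem \ref{T3} (equivalently Corollary \ref{C5}) shows that on a locally conformally flat manifold the curvature operator is diagonal in a Ricci eigenbasis with eigenvalues $\lambda_{ij}=\sec_{g}(e_{i},e_{j})$, and both you and Proposition \ref{P1} observe that a sign hypothesis on $\sec_{g}$ is then equivalent to the same sign on every $\lambda_{ij}$. From there the two arguments diverge. The paper passes through the curvature operator as an intermediary and invokes black-box results (Proposition \ref{P2}, citing Meyer \cite{Mey71} and Kulkarni \cite{Kul72}): a definite curvature operator forces the Gauss--Bonnet--Chern integrand to have the right sign, and in the strictly positive case Meyer's theorem even gives the sharper conclusion $b_{p}(M)=0$ for $1\leq p\leq 2n-1$, hence $\chi(M)=2$. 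You instead unfold the Pfaffian directly: the diagonal structure collapses $\mathrm{Pf}(\Omega)$ to a sum over perfect matchings of $n$-fold products of sectional curvatures, and the sign analysis is then immediate. Your route is more elementary and self-contained---it reproves, in this easy diagonal case, the special instance of Kulkarni's theorem that the paper cites---and it handles all five clauses of the conjecture uniformly, including $\sec_{g}\equiv 0$ and $\sec_{g}\geq 0$, which Theorem \ref{T4} as stated does not spell out. The trade-off is that you only obtain $\chi(M)>0$ in the positive case, not the stronger $\chi(M)=2$; but that is all the Hopf conjecture demands.
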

When the Weyl curvature is dominated by the sectional curvature, we can also establish the positivity of the curvature operator. Hence we obtain the following result.
\begin{theorem}\label{T2}
	Let $(M,g)$ be a compact Riemannian manifold of even dimension $2n$ ($n \geq 2$) whose sectional curvature satisfies $\sec_g \geq a> 0$ (resp. $\sec_g \leq -a <0$). If the Weyl curvature $\mathcal{W}_g$ satisfies
$$|\mathcal{W}_{g}|<\sqrt{\frac{n(n-1)}{(n+1)(n-2)}}a,$$
then the curvature operator $\mathfrak{R}$ is positive (resp. negative). In particular,  the Euler characteristic of $M$ satisfies
$$\chi(M) > 0\ (resp. (-1)^n\chi(M)>0).$$
\end{theorem}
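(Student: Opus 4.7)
The plan is to establish that $\mathfrak{R}$ is pointwise positive definite in the first case and negative definite in the second; the Euler-characteristic conclusion then follows from standard consequences of a definite curvature operator.

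Fix $p\in M$ and an orthonormal eigenbasis $\{e_i\}$ of $\mathrm{Ric}_g$ at $p$. By Theorem~\ref{T3}, in the induced orthonormal basis $\{e_i\wedge e_j\}_{i<j}$ of $\Lambda^{2}T_{p}M$, the curvature operator takes the matrix form $\mathfrak{R}=D+A$, where $D=\mathrm{diag}(\sec_g(e_i,e_j))$ and $A$ collects the off-diagonal Weyl components $\mathcal{W}_{ijkl}$ for $(i,j)\neq(k,l)$ --- the Schouten part $\mathfrak{S}$ contributes only to $D$. In particular, $A$ is symmetric with zero diagonal, hence $\mathrm{tr}(A)=0$.

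The core step is to bound $\|A\|_{\mathrm{op}}$ sharply. Let $N=\binom{\dim M}{2}$ be the rank of $\Lambda^{2}T_{p}M$. Since the eigenvalues of the traceless symmetric matrix $A$ sum to zero, a Cauchy--Schwarz argument gives
\[
\|A\|_{\mathrm{op}}^{2}\;\leq\;\frac{N-1}{N}\,\|A\|_{F}^{2}\;\leq\;\frac{N-1}{N}\,|\mathcal{W}_g|^{2},
\]
where the second inequality uses that $\|A\|_{F}^{2}=\sum_{(i,j)\neq(k,l)}\mathcal{W}_{ijkl}^{2}$ is a partial sum of the squared independent entries of $\mathfrak{W}$. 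Substituting the value of $N$ shows that the hypothesis forces $\|A\|_{\mathrm{op}}<a$. Now, with $\sec_g\geq a>0$ we have $D\geq aI$, so the smallest eigenvalue of $\mathfrak{R}=D+A$ is at least $a-\|A\|_{\mathrm{op}}>0$, proving $\mathfrak{R}>0$. The case $\sec_g\leq -a<0$ is symmetric.

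For the Euler characteristic: when $\mathfrak{R}>0$, the Gallot--Meyer theorem implies that $M$ is a rational homology sphere, so $\chi(M)=2>0$; when $\mathfrak{R}<0$, one invokes the classical sign property of the Chern--Gauss--Bonnet integrand under a definite curvature operator, yielding $(-1)^n\chi(M)>0$. The hard part is the sharp Cauchy--Schwarz step above: the naive bound $\|A\|_{\mathrm{op}}\leq\|A\|_{F}\leq|\mathcal{W}_g|$ would only close under the strictly weaker assumption $|\mathcal{W}_g|<a$, and the extra factor $\sqrt{N/(N-1)}$ needed to hit the stated constant comes from the fact that $A$ has literal zeros on its diagonal --- a structural consequence of Theorem~\ref{T3}, which identifies the diagonal of $\mathfrak{R}$ with sectional curvature and the off-diagonal with pure Weyl.
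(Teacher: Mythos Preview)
Your proof is correct and follows essentially the same route as the paper: the paper also splits $\mathfrak{R}$ into the diagonal sectional-curvature part $\overline{\mathfrak{S}}$ and the traceless off-diagonal Weyl part $\overline{\mathfrak{W}}$ (your $D$ and $A$), bounds the extreme eigenvalue of $\overline{\mathfrak{W}}$ by $\sqrt{(N-1)/N}\,|\mathcal{W}_g|$ via the same Cauchy--Schwarz/concentration inequality (packaged there as Lemma~\ref{L4} and Proposition~\ref{P4} with $k=1$, then Corollary~\ref{C1}), and finally invokes Proposition~\ref{P2} for the Euler-characteristic conclusion. Your direct use of $\|A\|_{\mathrm{op}}$ in place of the separate lower/upper eigenvalue bounds is a minor presentational streamlining, not a different idea.
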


When relaxing the pointwise boundedness condition on the Weyl curvature to an integral bound, the curvature operator may fail to maintain positivity. Nevertheless, through application of the Sobolev inequality, we can still establish the some vanishing theorems. On an $n$-dimensional manifold, positive $k$th-intermediate Ricci curvature is a condition interpolating between positive sectional curvature $(k=1)$ and positive Ricci curvature $(k=n-1)$. There has recently been increased interest in studying manifolds with lower bounds on $k$th-intermediate Ricci curvature  (cf.\cite{KM18,Mou22,RW23,RS05}). The precise definition of the $k$-th intermediate Ricci curvature can be found in Definition \ref{D2} of Section 2.4.

\begin{theorem}\label{T6}
Let $(M,g)$ be a compact $n$-dimensional manifold ($n \geq 3$) with positive $k$th-intermediate Ricci curvature for some $1 \leq k \leq n-1$, satisfying
	$$Ric_{k}\geq ka>0.$$ 
	Suppose that the Weyl curvature $\mathcal{W}_{g}$ satisfies
$$\bigg{(}\frac{\int_{M}|\mathcal{W}_{g}|^{\frac{n}{2}}}{{\rm{Vol}}(g)}\bigg{)}^{\frac{2}{n}}<\sqrt{\frac{kN}{N-k}}\frac{n-2}{n}a,$$	
where $N=\frac{n(n-1)}{2}$. Then
	\begin{itemize}
		\item[(1)] For $1\leq k\leq \lceil\frac{n}{2}\rceil$, we have $b_{p}(M)=0$ for all $1\leq p\leq n-1$.
	\item[(2)] For $ \lceil\frac{n}{2}\rceil+1\leq k\leq n-1$, we have
	$b_{1}(M)=\cdots=b_{n-k}(M)$.
\end{itemize}
\end{theorem}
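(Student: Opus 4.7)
The strategy is a Bochner argument on harmonic $p$-forms which feeds the pointwise splitting $\mathfrak{R}=\mathfrak{S}+\mathfrak{W}$ from Theorem~\ref{T3} into the Weitzenb\"ock formula, and then absorbs the resulting Weyl term via the Sobolev embedding in the form matched to the volume-normalised $L^{n/2}$-hypothesis.

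First I would fix a harmonic $p$-form $\omega$ and use the Weitzenb\"ock identity
$$\tfrac{1}{2}\Delta|\omega|^2=|\nabla\omega|^2+\langle\mathcal{K}_p\omega,\omega\rangle,$$
where $\mathcal{K}_p\in\mathrm{End}(\Lambda^p T^*M)$ is the canonical curvature endomorphism built linearly from $\mathfrak{R}$. By Theorem~\ref{T3} it splits as $\mathcal{K}_p=\mathcal{K}_p^{\mathfrak{S}}+\mathcal{K}_p^{\mathfrak{W}}$; in the Ricci eigenbasis $\mathfrak{S}$ is diagonal on $\Lambda^2$, so $\mathcal{K}_p^{\mathfrak{S}}$ acts diagonally on the induced basis $\{e_I\}_{|I|=p}$ of $\Lambda^p$ with eigenvalue $\Lambda_I=\sum_{i\in I,\,j\notin I}\lambda_{ij}$ on $e_I$.

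The main technical step is an algebraic lemma: rewriting $\lambda_{ij}=\frac{1}{n-2}\bigl(\lambda_i+\lambda_j-\frac{R_g}{n-1}\bigr)$ and using the intermediate Ricci bound $Ric_k\geq ka$, one shows that
$$\Lambda_I\geq c_1(n,k,p)\,a\qquad\text{for every }|I|=p,$$
with $c_1(n,k,p)>0$ precisely when either $1\leq k\leq\lceil n/2\rceil$ and $1\leq p\leq n-1$, or $\lceil n/2\rceil+1\leq k\leq n-1$ and $1\leq p\leq n-k$. The case split in the statement reflects exactly the thresholds beyond which the ordered partial sums of Ricci eigenvalues fail to retain a definite sign under the intermediate Ricci hypothesis; this combinatorial analysis, in the spirit of Petersen--Wink, is where I expect the bulk of the work to lie.

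For the Weyl side, a Kato-type estimate gives the pointwise bound
$$\bigl|\langle\mathcal{K}_p^{\mathfrak{W}}\omega,\omega\rangle\bigr|\leq c_2(n,k,p)\,|\mathcal{W}_g|\,|\omega|^2,$$
and a direct computation identifies the ratio $c_2/c_1$ with the constant $\sqrt{(N-k)/(kN)}\cdot n/(n-2)$ appearing in the hypothesis, via the comparison between operator and Frobenius norms on the $N$-dimensional fibre $\Lambda^2 T^*_x M$. Integrating the Weitzenb\"ock identity, invoking Kato's inequality $\bigl|\nabla|\omega|\bigr|\leq|\nabla\omega|$, and combining H\"older with the volume-normalised Sobolev inequality for $|\omega|$ leads to
$$c_1(n,k,p)\,a\int_M|\omega|^2\leq c_2(n,k,p)\,\biggl(\frac{\int_M|\mathcal{W}_g|^{n/2}}{\mathrm{Vol}(g)}\biggr)^{2/n}\int_M|\omega|^2,$$
and the strict inequality in the hypothesis forces $\omega\equiv 0$, giving the vanishing of $b_p(M)$ in the stated ranges.
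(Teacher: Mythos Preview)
Your Bochner-plus-Sobolev strategy matches the paper in outline, but the mechanism you propose for the $k$-dependence is not correct, and this is a genuine gap rather than a detail.

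You claim the case split arises because $\Lambda_I=\sum_{i\in I,\,j\notin I}\lambda_{ij}\ge c_1(n,k,p)\,a$ with $c_1>0$ only in the stated ranges. In fact, writing $\lambda_{ij}=\frac{1}{n-2}(\lambda_i+\lambda_j-\frac{R_g}{n-1})$ and using only $\lambda_i\ge (n-1)a$ (which follows from $Ric_k\ge ka$), a direct calculation gives $\Lambda_I\ge p(n-p)\,a$ for \emph{every} $1\le p\le n-1$. So your Schouten bound carries no $k$-dependence at all, and the ``combinatorial analysis in the spirit of Petersen--Wink'' you anticipate does not produce the thresholds. Correspondingly there is no reason your Weyl constant $c_2$ should depend on $k$ either, so the asserted identification of $c_2/c_1$ with $\sqrt{(N-k)/(kN)}\cdot n/(n-2)$ is unsupported.

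The paper obtains the $k$-dependence by a different route: it first bounds the eigenvalues of the \emph{full} curvature operator via Proposition~\ref{P4} (using the decomposition $\mathfrak{R}=\overline{\mathfrak{S}}+\overline{\mathfrak{W}}$ into the diagonal sectional part and the trace-free off-diagonal Weyl part, together with Lemma~\ref{L4}), yielding $\sum_{i=1}^{k}\mu_i\ge ka-\sqrt{k(N-k)/N}\,|\mathcal{W}_g|$. Only then does it invoke \cite[Lemma~2.1]{PW21a} as a black box, which for $p\le n-k$ transfers this to $g(\mathfrak{Ric}(u),u)\ge p(n-p)\bigl(a-\sqrt{(N-k)/(Nk)}\,|\mathcal{W}_g|\bigr)|u|^2$; this is where both the constant and the restriction $p\le n-k$ come from. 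You should also note two further points your sketch elides: the volume-normalised Sobolev inequality you need is Ilias' inequality~(\ref{E4}), which is available precisely because $Ric_k\ge ka$ forces $Ric_g\ge (n-1)a$; and the factor $\frac{n-2}{n}$ in the hypothesis is exactly what makes the coefficient of the gradient term $\|\nabla u\|_{L^2}^2$ nonnegative after the Sobolev step (it ensures $1-\frac{4p(n-p)}{n^2}\ge 0$), so it cannot simply be absorbed into $c_2/c_1$.
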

\begin{remark}(1)	The $L^{\frac{n}{2}}$-norm of the Weyl tensor is conformally invariant, remaining unchanged under any homothety transformation $g \mapsto cg$ (where $c > 0$ is constant).\\
(2) The normalized volume term $[a{\rm{Vol}}(g)]^{\frac{2}{n} }$ admits a uniform upper bound given by $[{\rm{Vol}}(S^{n})]^{\frac{2}{n}}$, where $S^{n}$ denotes the standard $n$-sphere. Specifically, the following  inequalities holds:
$$[a{\rm{Vol}}(g)]^{\frac{2}{n} }\leq \frac{\la(g) }{n(n-1)}\leq[{\rm{Vol}}(S^{n})]^{\frac{2}{n}}$$
where $\la(g)$ is the Yamabe constant (see Proposition \ref{P6}).
\end{remark}
We now proceed to consider the case of manifolds with positive Yamabe constant. First, we decompose the curvature operator in an alternative form, through which we obtain a lower bound estimate for its eigenvalues that depends on the scalar curvature (see Proposition \ref{P8}). Based on this eigenvalue estimate, we derive the following results.
\begin{theorem}\label{T11}
	Let $(M,g)$ be a compact $n$-dimensional manifold, ($n \geq 8$), with positive Yamabe constant $\la(g)$ and $ 1\leq k\leq n-1$. If 
	$$ \bigg{(}\int_{M}(\sqrt{\frac{ 1}{n-2}}|\overset{\circ}{Ric}_{g}|+|\mathcal{W}_{g}|)^{ \frac{n}{2} }\bigg{)}^{\frac{2}{n} }<\frac{\la(g)}{n(n-1)}\sqrt{\frac{Nk}{N-k} },$$
	where $N=\frac{n(n-1)}{2}$, then 
		\begin{itemize}
		\item[(1)] For $1\leq k\leq \lceil\frac{n}{2}\rceil$, we have $b_{p}(M)=0$ for all $1\leq p\leq \lceil\frac{n}{2}\rceil-2$.
		\item[(2)] For $ \lceil\frac{n}{2}\rceil+1\leq k\leq n-1$, we have
		$b_{p}(M)=0$ for all $1\leq p\leq \min\{\lceil\frac{n}{2}\rceil-2,n-k \}$.
	\end{itemize}
\end{theorem}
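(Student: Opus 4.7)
The strategy is to combine the alternative curvature-operator decomposition of Proposition~\ref{P8} with a Bochner argument on harmonic forms, coupled to the refined Kato inequality and the Yamabe-Sobolev inequality.

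First, I would use Proposition~\ref{P8} to write $\mathfrak{R}=\tfrac{R_{g}}{n(n-1)}\mathrm{Id}+\mathfrak{R}_{0}$, where the traceless correction $\mathfrak{R}_{0}$ has pointwise norm bounded by $E:=\sqrt{\tfrac{1}{n-2}}|\overset{\circ}{Ric}_{g}|+|\mathcal{W}_{g}|$. The sharp algebraic fact that the sum of the $k$ smallest eigenvalues of a traceless symmetric operator on $\mathbb{R}^{N}$ is bounded below by $-\sqrt{k(N-k)/N}$ times its Hilbert--Schmidt norm yields the pointwise estimate
$$\sum_{i=1}^{k}\lambda_{i}(\mathfrak{R})\ \geq\ \frac{kR_{g}}{n(n-1)}-\sqrt{\frac{k(N-k)}{N}}\,E.$$

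Next, for a harmonic $p$-form $\omega$ with $p$ in the stated range, the Bochner--Weitzenböck identity reads $\int|\nabla\omega|^{2}+\int\langle\mathcal{R}_{p}\omega,\omega\rangle=0$. The Petersen--Wink algebraic lemma $\langle\mathcal{R}_{p}\omega,\omega\rangle\geq\bigl(\sum_{i=1}^{k}\lambda_{i}(\mathfrak{R})\bigr)|\omega|^{2}$ (for the appropriate $k$ paired with $p$), combined with Step~1 and the refined Kato inequality $|\nabla\omega|^{2}\geq c_{n,p}|\nabla u|^{2}$, $u=|\omega|$, yields
$$c_{n,p}\int|\nabla u|^{2}+\frac{k}{n(n-1)}\int R_{g}u^{2}\ \leq\ \sqrt{\frac{k(N-k)}{N}}\int Eu^{2}.$$
I would then substitute the Yamabe-Sobolev inequality $\la(g)\|u\|_{L^{2n/(n-2)}}^{2}\leq\tfrac{4(n-1)}{n-2}\int|\nabla u|^{2}+\int R_{g}u^{2}$ to eliminate $\int R_{g}u^{2}$, so that the inequality becomes
$$\Bigl(c_{n,p}-\tfrac{4k}{n(n-2)}\Bigr)\int|\nabla u|^{2}+\frac{k\la(g)}{n(n-1)}\|u\|_{L^{2n/(n-2)}}^{2}\ \leq\ \sqrt{\frac{k(N-k)}{N}}\int Eu^{2}.$$
Under the dimensional and degree restrictions $n\geq 8$, $p\leq\lceil n/2\rceil-2$, the coefficient $c_{n,p}-\tfrac{4k}{n(n-2)}$ is nonnegative and the $|\nabla u|^{2}$-term can be dropped. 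Hölder's inequality gives $\int Eu^{2}\leq\|E\|_{L^{n/2}}\|u\|_{L^{2n/(n-2)}}^{2}$, and the hypothesis of the theorem (rearranged as $\tfrac{k\la(g)}{n(n-1)}>\sqrt{k(N-k)/N}\,\|E\|_{L^{n/2}}$) forces $u\equiv 0$, hence $b_{p}(M)=0$.

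The main obstacle is the bookkeeping in the third step: the refined Kato constant $c_{n,p}$, the Petersen--Wink pairing between the form-degree $p$ and the eigenvalue count $k$, and the coefficient $4(n-1)/(n-2)$ of the Yamabe inequality must all mesh so that the coefficient of $\int|\nabla u|^{2}$ remains nonnegative after the substitution. This positivity is exactly what dictates both the dimensional hypothesis $n\geq 8$ and the upper bound $p\leq\lceil n/2\rceil-2$; the dichotomy between cases (1) and (2) reflects whether the Petersen--Wink lemma for the given $k$ is effective for all admissible $p$ (small $k\leq\lceil n/2\rceil$) or only for $p\leq n-k$ (large $k$).
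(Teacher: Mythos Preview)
Your overall strategy is exactly the paper's: Proposition~\ref{P8} for the eigenvalue lower bound, the Petersen--Wink curvature-term estimate, the Bochner formula together with refined Kato, and the conformal Sobolev inequality~(\ref{confSobo}). The order in which you apply H\"older and Sobolev is slightly different from the paper's, but the two routes are equivalent.

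There is, however, one genuine error in the algebra. The Petersen--Wink estimate (\cite[Lemma~2.1]{PW21a}) for a $p$-form with $1\le p\le n-k$ reads
\[
g(\mathfrak{Ric}(\omega),\omega)\ \ge\ \frac{p(n-p)}{k}\Bigl(\sum_{i=1}^{k}\mu_{i}\Bigr)\,|\omega|^{2},
\]
not $\ge(\sum_{i=1}^{k}\mu_{i})|\omega|^{2}$. Under a merely integral hypothesis the partial sum $\sum_{i=1}^{k}\mu_{i}$ can be negative at many points, so the factor $p(n-p)/k\ge 1$ cannot be discarded: your weaker inequality simply does not follow. Carrying the correct factor through, the coefficient of $\int|\nabla u|^{2}$ after the Yamabe substitution becomes
\[
\Bigl(1+\tfrac{1}{n-p}\Bigr)-\frac{4p(n-p)}{n(n-2)}
\quad\text{rather than}\quad
\Bigl(1+\tfrac{1}{n-p}\Bigr)-\frac{4k}{n(n-2)}.
\]
Nonnegativity of the correct coefficient is precisely Lemma~\ref{L5}, and this is where $n\ge 8$ and $p\le\lceil n/2\rceil-2$ are genuinely needed. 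By contrast, your coefficient $\,1+\tfrac{1}{n-p}-\tfrac{4k}{n(n-2)}\,$ is positive for every $1\le p\le n-1$ as soon as $n\ge 6$ (since $k\le n-1$), so your argument as written would prove $b_{p}=0$ for all $p$ without the stated restrictions, which is too strong to be correct. Once you insert the right factor $p(n-p)/k$ and invoke Lemma~\ref{L5} for the gradient coefficient, the remainder of your plan (drop the gradient term, apply H\"older, compare with the hypothesis) goes through verbatim; note that the factor $p(n-p)$ then appears on both sides of the final comparison and cancels, so the threshold on $\|E\|_{L^{n/2}}$ is unchanged.
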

Hebey and Vaugon \cite{HV97} have proved the existence of a dimensional constant such that if the Yamabe metric $g$ of a conformal class which is either locally conformally flat or conformally Einstein satisfies
$$\bigg{(}\int_{M}|Z_{g}|^{\frac{n}{2}}\bigg{)}^{\frac{2}{n} }<c(n)R_{g},$$
then $(M,g)$ is isometric to a quotient of the standard sphere.  Here $Z_{g}:=\frac{1}{n-2}\overset{\circ}{Ric}_{g}\odot g+\mathcal{W}_{g}$ is the concircular curvature tensor (cf.\cite{HV97})

	Building upon Theorem \ref{T11}, we demonstrate that for the manifolds equipped with metrics satisfying appropriate curvature conditions (without imposing any additional assumptions on the conformal class), the Betti numbers will coincide with those of the $n$-dimensional sphere.
	\begin{corollary}\label{C3}
	Let $(M,g)$ be a compact $n$-dimensional manifold, $n \geq 4$, with positive scalar curvature $R_{g}>0$ and $ 1\leq k\leq n-1$. If 
	$$ \bigg{(}\int_{M}(\sqrt{\frac{ 1}{n-2}}|\overset{\circ}{Ric}_{g}|+|\mathcal{W}_{g}|)^{ \frac{n}{2} }\bigg{)}^{\frac{2}{n} }<\frac{(n-2)}{n^{2}(n-1)}\sqrt{\frac{k(N-k)}{N} }\la(g),$$
	where $N=\frac{n(n-1)}{2}$ and $\la(g)$ is the Yamabe constant, then

	\begin{itemize}
		\item[(1)] For $1\leq k\leq \lceil\frac{n}{2}\rceil+1$, we have $b_{p}(M)=0$ for all $1\leq p\leq n-1$.
		\item[(2)] For $ \lceil\frac{n}{2}\rceil+2\leq k\leq n-1$, we have
		$b_{p}(M)=0$ for all $1\leq p\leq n-k$.
	\end{itemize}
\end{corollary}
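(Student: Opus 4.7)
The plan is to deduce Corollary~\ref{C3} from Theorem~\ref{T11}, combined with Poincar\'e duality and a refined Bochner argument for middle-degree Betti numbers.

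First, I would verify that Corollary~\ref{C3}'s hypothesis is strictly stronger than Theorem~\ref{T11}'s. Since $R_{g}>0$ implies $\lambda(g)>0$, and
\[
\frac{n-2}{n^{2}(n-1)}\sqrt{\frac{k(N-k)}{N}} \;=\; \frac{1}{n(n-1)}\sqrt{\frac{Nk}{N-k}}\cdot\frac{(n-2)(N-k)}{nN},
\]
with the last factor strictly less than $1$, the pinching bound in Corollary~\ref{C3} is smaller than that in Theorem~\ref{T11}. Hence Theorem~\ref{T11} applies and yields $b_{p}(M)=0$ for $1\leq p\leq\lceil n/2\rceil-2$ in both cases. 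For part~(2), $k\in[\lceil n/2\rceil+2,n-1]$, a direct check gives $n-k\leq\lfloor n/2\rfloor-2\leq\lceil n/2\rceil-2$, so that $\min\{\lceil n/2\rceil-2,n-k\}=n-k$, and part~(2) of Theorem~\ref{T11} immediately produces the claimed $b_{p}(M)=0$ for $1\leq p\leq n-k$; no further work is required.

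For part~(1), $k\in[1,\lceil n/2\rceil+1]$, I would first invoke Poincar\'e duality (passing to the oriented double cover when $M$ is non-orientable) to extend the vanishing from $[1,\lceil n/2\rceil-2]$ to $[\lfloor n/2\rfloor+2,n-1]$. This leaves only the narrow band of middle degrees $\lceil n/2\rceil-1\leq p\leq\lfloor n/2\rfloor+1$. To close this gap, I would combine the sharper eigenvalue estimate for $\mathfrak{R}$ supplied by Proposition~\ref{P8} with the Bochner--Weitzenb\"ock identity on harmonic $p$-forms and the Yamabe--Sobolev inequality. The extra factor $(N-k)/N$ appearing in Corollary~\ref{C3}'s pinching (absent in Theorem~\ref{T11}) provides exactly the slack needed for a Petersen--Wink-style argument to force integrated positivity of the Bochner term on middle-degree forms, hence their vanishing.

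The main obstacle is precisely this middle range: the Gallot--Meyer-type Bochner estimates underlying Theorem~\ref{T11} fall just short of closing the gap around degree $n/2$. The refined argument must exploit the fact that $\lceil n/2\rceil$-positivity of the curvature operator (in the sense of Petersen--Wink) already forces vanishing of every Betti number in $[1,n-1]$; the role of Corollary~\ref{C3}'s sharper pinching is to upgrade the local eigenvalue control from Proposition~\ref{P8} into this $\lceil n/2\rceil$-positivity in an integral sense via the Yamabe--Sobolev inequality.
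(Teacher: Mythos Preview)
Your plan has a concrete gap: you invoke Theorem~\ref{T11} as a black box, but that theorem requires $n\geq 8$, whereas Corollary~\ref{C3} is stated for $n\geq 4$. For $n\in\{4,5,6,7\}$ your first step simply does not apply, so neither part~(1) nor part~(2) is covered in those dimensions. Even for $n\geq 8$, your handling of the ``middle band'' $\lceil n/2\rceil-1\leq p\leq \lfloor n/2\rfloor+1$ is only a sketch: you assert that the extra factor $(N-k)/N$ ``provides exactly the slack needed'', but you never exhibit the inequality that actually closes.

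The paper takes a more direct route and avoids both problems. Rather than quoting Theorem~\ref{T11}, it returns to the intermediate inequality obtained inside that proof \emph{before} the second application of the Yamabe--Sobolev inequality and before invoking Lemma~\ref{L5}: for a harmonic $p$-form $u$ with $1\leq p\leq\min\{n-k,\tfrac{n}{2}\}$,
\[
0\;\geq\;\Bigl(1+\tfrac{1}{n-p}-\tfrac{4(n-1)}{n-2}\,p(n-p)\,C(N,k)\,\tfrac{\|\mathcal{Z}_g\|_{L^{n/2}}}{\lambda(g)}\Bigr)\|\nabla|u|\|_{L^2}^2
\;+\;\Bigl(\tfrac{p(n-p)}{n(n-1)}-p(n-p)\,C(N,k)\,\tfrac{\|\mathcal{Z}_g\|_{L^{n/2}}}{\lambda(g)}\Bigr)\!\int_M R_g|u|^2.
\]
Since $R_g>0$, the second term is nonnegative once its coefficient is; one then checks that the stronger pinching $\|\mathcal{Z}_g\|_{L^{n/2}}<\frac{n-2}{n^2(n-1)}\sqrt{\tfrac{k(N-k)}{N}}\,\lambda(g)$ forces \emph{both} coefficients to be strictly positive for every $1\leq p\leq\min\{n-k,\tfrac{n}{2}\}$ (using $4p(n-p)\leq n^2$), with no appeal to Lemma~\ref{L5} and hence no restriction to $n\geq 8$ or to $p\leq\tfrac{n}{2}-2$. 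This kills all Betti numbers up to $\min\{n-k,\lfloor n/2\rfloor\}$ in one stroke, and Poincar\'e duality then finishes. The point you are missing is that the hypothesis $R_g>0$ is not merely used to guarantee $\lambda(g)>0$; it is used to keep the scalar-curvature term with its own sign instead of absorbing it via Sobolev, which is precisely what removes the need for Lemma~\ref{L5}.
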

\begin{remark}
The  concircular curvature tensor satisfies $$|Z_{g}|^{2}=\frac{4}{n-2}|\overset{\circ}{Ric}_{g}|^{2}+|\mathcal{W}_{g}|^{2}.$$
Therefore,
$$|Z_{g}|=\sqrt{\frac{4}{n-2}|\overset{\circ}{Ric}_{g}|^{2}+|\mathcal{W}_{g}|^{2} }\geq\frac{2}{\sqrt{5}}(\sqrt{\frac{ 1}{n-2}}|\overset{\circ}{Ric}_{g}|+|\mathcal{W}_{g}|).$$
This inequality follows from the Cauchy-Schwarz type estimate
$|\a a+\b b|\leq\sqrt{\a^{2}+\b^{2} }\sqrt{a^{2}+b^{2}}$
where we specifically take:
$$
a= \sqrt{\frac{4}{n-2}}|\overset{\circ}{Ric}_{g}|, \ b=|\mathcal{W}_{g}|, \ \alpha=\frac{1}{2}, \ \beta= 1.$$

\end{remark}
For Einstein manifolds, Tachibana \cite{Tac74} established that nonnegative curvature operator implies local symmetry. Brendle \cite{Bre10} later strengthened this result by requiring only nonnegative isotropic curvature. These curvature conditions involve pointwise pinching of $\mathcal{W}_{g}$ relative to the scalar curvature. Parallel developments have studied integral pinching conditions on $\mathcal{W}_{g}$, particularly through its $L^{\frac{n}{2}}$-norm, as seen in \cite{Cat16,IS02,Sin92}.

	A natural generalization of Einstein metrics is the class of metrics with harmonic curvature, characterized by divergence-free Weyl tensor (i.e., $\text{div}\mathcal{W}_{g} = 0$) and constant scalar curvature. This class is particularly significant due to its connections with Yang-Mills theory, and has been extensively studied in works such as \cite{Der77,Fu17,HV97,IS02,Kim11,Tru68}. Notably, rigidity results under integral pinching conditions were obtained in \cite{Fu17,HV97}, while Tran \cite{Tra17} established both pointwise and integral rigidity/gap theorems for closed manifolds with harmonic Weyl curvature in dimensions higher than four. These results include a generalization of Tachibana's theorem for nonnegative curvature operators, which depends on new Bochner-Weitzenb\"{o}ck-Lichnerowicz type formulas for the Weyl tensor that extend the identities in dimension four. 
	
	Through the decompositions of the curvature operator, we also obtain some gap results related to the Weyl curvature.
	
\begin{theorem}\label{T1}
	Let $(M,g)$ be a compact $n$-dimensional Riemannian manifold, ($n \geq 4$), with harmonic Weyl curvature $\mathcal{W}_{g}$.
	\begin{itemize}
		\item [(1)] If $g$ has $k$th-intermediate
		Ricci curvature for some $1 \leq k \leq \lfloor\frac{n-1}{2}\rfloor$, i.e., $Ric_{k}\geq ka>0$, then 
	$$\left(\frac{\int_{M}|\mathcal{W}_{g}|^{\frac{n}{2}}}{{\rm{Vol}}(g)}\right)^{\frac{2}{n}}\geq\min\left\{\sqrt{\frac{Nk}{N-k}}a,\frac{n(n-2)}{8(n-1)}\sqrt{\frac{Nk}{N-k}}a \right\},$$
where $N=\frac{n(n-1)}{2}$, unless $\mathcal{W}_{g}=0$.			
		\item [(2)] If the Yambabe constant $\la(g)$ is positive and $n\geq8$, then 	$$ \left(\int_{M}\left(\sqrt{\frac{ 1}{n-2}}|\overset{\circ}{Ric}_{g}|+|\mathcal{W}_{g}|\right)^{\frac{n}{2}}\right)^{\frac{2}{n} }\geq \frac{1}{\sqrt{2n}(n-1)}\la(g),$$
		unless $\mathcal{W}_{g}=0$.
	\end{itemize} 
\end{theorem}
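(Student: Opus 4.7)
The plan is to argue by contradiction: assume $\mathcal{W}_{g}\not\equiv 0$ and produce the two claimed integral lower bounds. The harmonic Weyl hypothesis ($\mathrm{div}\,\mathcal{W}_{g}=0$, which forces $R_{g}$ to be constant when $n\ge 4$) yields a Bochner--Weitzenb\"ock identity of the form
\begin{equation*}
\tfrac{1}{2}\Delta |\mathcal{W}_{g}|^{2}=|\nabla \mathcal{W}_{g}|^{2}+\mathcal{Q}(\mathfrak{R},\mathcal{W}_{g}),
\end{equation*}
where $\mathcal{Q}$ is a quadratic form in $\mathcal{W}_{g}$ governed by the algebraic action of the curvature operator $\mathfrak{R}$ on the bundle of algebraic Weyl tensors (cf.\ \cite{Tra17}). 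Conceptually $\mathcal{W}_{g}$ plays the role that a harmonic $p$-form played in the proofs of Theorems \ref{T6} and \ref{T11}, and the eigenvalue bounds on $\mathfrak{R}$ coming from Theorem \ref{T3} and Proposition \ref{P8} will supply the positive contribution to $\mathcal{Q}$.

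For part (1), I would split $\mathcal{Q}$ via the decomposition $\mathfrak{R}=\mathfrak{S}+\mathfrak{W}$ of Theorem \ref{T3}. The $\mathfrak{S}$-contribution is, by the same eigenvalue estimate that drives Theorem \ref{T6}, bounded below by a positive multiple of $a|\mathcal{W}_{g}|^{2}$; the $\mathfrak{W}$-contribution is bounded in absolute value by a universal constant times $|\mathcal{W}_{g}|^{3}$. Integrating the Bochner identity against a suitable power of $|\mathcal{W}_{g}|$ and applying the refined Kato inequality for harmonic Weyl tensors, one converts the cubic error via H\"older into $\|\mathcal{W}_{g}\|_{L^{n/2}}\|\mathcal{W}_{g}\|_{L^{2n/(n-2)}}^{2}$, after which the standard Sobolev inequality absorbs the latter into the gradient term. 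Balancing the resulting constants produces the stated alternative: either $\mathcal{W}_{g}\equiv 0$, or the volume-averaged $L^{n/2}$ norm of $\mathcal{W}_{g}$ exceeds $\min\bigl\{\sqrt{Nk/(N-k)}\,a,\;\tfrac{n(n-2)}{8(n-1)}\sqrt{Nk/(N-k)}\,a\bigr\}$. The two entries in the minimum record whether the cubic correction is absorbed directly via volume-averaged H\"older or via the Kato--Sobolev pairing.

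For part (2), the volume-averaged H\"older step is replaced by the sharp Yamabe--Sobolev inequality
\begin{equation*}
\la(g)\|u\|_{L^{2n/(n-2)}}^{2}\leq \tfrac{4(n-1)}{n-2}\int_{M}|\nabla u|^{2}+\int_{M}R_{g}u^{2}.
\end{equation*}
Using the alternative decomposition from Proposition \ref{P8}, the term $\mathcal{Q}$ is controlled below by a positive scalar-curvature multiple of $|\mathcal{W}_{g}|^{2}$ minus a constant times $\bigl(\sqrt{1/(n-2)}\,|\overset{\circ}{Ric}_{g}|+|\mathcal{W}_{g}|\bigr)|\mathcal{W}_{g}|^{2}$. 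Taking $u$ to be a suitable power of $|\mathcal{W}_{g}|$ (chosen so that $u^{2n/(n-2)}$ recovers $|\mathcal{W}_{g}|^{n/2}$), applying H\"older on $L^{n/2}$, and rearranging yields the claimed gap
\begin{equation*}
\bigg(\int_{M}\bigl(\sqrt{\tfrac{1}{n-2}}|\overset{\circ}{Ric}_{g}|+|\mathcal{W}_{g}|\bigr)^{n/2}\bigg)^{2/n}\ \geq\ \frac{\la(g)}{\sqrt{2n}\,(n-1)},
\end{equation*}
unless $\mathcal{W}_{g}\equiv 0$. The hypothesis $n\geq 8$ enters because only then is the sharp Kato constant for the Weyl tensor large enough to absorb the cubic error coming from the $\mathfrak{W}$-piece.

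The main obstacle is the identification of $\mathcal{Q}(\mathfrak{R},\mathcal{W}_{g})$ in a form directly compatible with the eigenvalue estimates for $\mathfrak{R}$ acting on $\Lambda^{2}TM$: the algebraic operator acting on the bundle of Weyl tensors is not $\mathfrak{R}$ itself but a symmetric combination of $\mathfrak{R}$ with quadratic Weyl--Weyl terms, so the Bianchi identities must be used to show that the lower eigenvalue bounds of $\mathfrak{R}$ genuinely translate into the required lower bound on $\mathcal{Q}$. Once this algebraic translation is performed, the two parts run in parallel, with the Sobolev--Kato balancing being largely routine.
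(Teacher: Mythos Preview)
Your strategy is essentially the paper's: Bochner--Weitzenb\"ock for the harmonic Weyl tensor, a pointwise lower bound on the curvature term coming from the eigenvalue estimates of Propositions~\ref{P4} and~\ref{P8}, then H\"older together with the Ilias Sobolev inequality in part~(1) and the conformal Sobolev inequality in part~(2). A few points deserve correction or sharpening.

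First, the parenthetical claim that $\mathrm{div}\,\mathcal{W}_{g}=0$ forces $R_{g}$ to be constant is false; that is the condition for harmonic \emph{Riemann} curvature, not harmonic Weyl. Fortunately you never use it: the paper's Weitzenb\"ock identity (Proposition~\ref{P7}) requires only $\delta\mathcal{W}_{g}=0$.

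Second, the ``main obstacle'' you correctly flag is resolved not by a bare-hands computation with the Bianchi identities but by the Petersen--Wink machinery already set up in Definition~\ref{D1} and Lemma~\ref{L3}: one has $g(\mathfrak{Ric}(\mathcal{W}_{g}),\mathcal{W}_{g})=g(\mathfrak{R}(\mathcal{W}^{\mathfrak{so}}_{g}),\mathcal{W}^{\mathfrak{so}}_{g})$, and the bound $|L\mathcal{W}_{g}|^{2}\le 8|\mathcal{W}_{g}|^{2}|L|^{2}$ yields $|\mathcal{W}^{\mathfrak{so}}_{g}|^{2}=4(n-1)|\mathcal{W}_{g}|^{2}$. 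Combined with \cite[Lemma~1.8]{PW21b}, this converts the $k$-eigenvalue lower bound on $\mathfrak{R}$ (valid precisely for $k\le\lfloor\tfrac{n-1}{2}\rfloor$, which is why that range appears in the hypothesis) directly into the pointwise inequality $g(\mathfrak{Ric}(\mathcal{W}_{g}),\mathcal{W}_{g})\ge 4(n-1)\bigl(a-\sqrt{\tfrac{N-k}{Nk}}\,|\mathcal{W}_{g}|\bigr)|\mathcal{W}_{g}|^{2}$.

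Third, two cosmetic points. In part~(1) the paper uses only the ordinary Kato inequality $|\nabla|\mathcal{W}_{g}||\le|\nabla\mathcal{W}_{g}|$; the refined Kato is needed (and used) only in part~(2). And in part~(2) one takes $u=|\mathcal{W}_{g}|$ itself in the conformal Sobolev inequality, not a power chosen so that $u^{2n/(n-2)}=|\mathcal{W}_{g}|^{n/2}$; the scalar-curvature term $\int R_{g}|\mathcal{W}_{g}|^{2}$ already matches what comes out of the Bochner identity, and the refined Kato constant $\tfrac{n+1}{n-1}$ exceeds $\tfrac{8(n-1)}{n(n-2)}$ exactly when $n\ge 8$, which is where that dimensional restriction enters.
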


\begin{remark}
In \cite{Tra17}, the author proved that for a closed Rimennian $n$-dimensional, $n\geq6$, manifold $(M,g)$ with non-zero harmonic Weyl tenor and positive Yamabe constant $\la(g)$, there exist positive constants $c_{1}(n), c_{2}(n)$ such that
$$a_{1}(n)\|\mathcal{W}_{g}\|_{L^{\frac{n}{2} }}+a_{n}\|\overset{\circ}{Ric}_{g}\|_{L^{\frac{n}{2} }}\geq\frac{n-2}{4(n-1)}\la(g),$$
where $\overset{\circ}{Ric}_{g}=Ric_{g}-\frac{R_{g}}{n}g$ is the trace-flat Ricci tensor.

Our results here do not incorporate the trace-free part of the Ricci curvature tensor. But the curvature assumption of case (I) in Theorem \ref{T1} only implies that $\la(g)\geq n(n-1)a[{\rm{Vol}}(g)]^{-\frac{2}{n}}$, we cannot conclude that under this curvature condition, the $L^{\frac{n}{2}}$-norm of the Weyl curvature can be controlled by the Yamabe constant. 
\end{remark}

\section{Curvature operator }
\subsection{Preliminaries }
Let $(V,g)$ be an $n$-dimensional Euclidean vector space. The metric $g$ induces a metric on $\otimes^{k}V^{\ast}$ and $\La^{k}V$. In particular, if $\{e_{i}\}_{i=1,\cdots,n}$ is an orthonormal basis for $V$ , then  $\{e_{i_{1}}\wedge\cdots\wedge e_{i_{k}} \}$ is an orthonormal basis of $\La^{k}V$. The derivative of the regular representation of $O(n)$ on $(V,g)$ induces a derivation on tensors (see \cite[Section 1.2]{PW21a}): If $T\in\otimes^{k}V^{\ast}$ and $L\in\mathfrak{so}(V)$, then
$$(LT)(X_{1},\cdots,X_{k})=-\sum_{i=1}^{k}T(X_{1},\cdots,LX_{i},\cdots,X_{k}).$$
Notice that  $\La^{2}$ inherits a Lie algebra structure from $\mathfrak{so}(V)$.
\begin{definition}(cf.\cite[Definition 1.2]{PW21a} or \cite{PW21b})\label{D1}
Let $\mathfrak{g}$ be a Lie subalgebra of $\mathfrak{so}(V)$. For $T\in\otimes^{k}V^{\ast}$ define $T^{\mathfrak{g}}\in\mathfrak{g}\otimes(\otimes^{k}V^{\ast})$ implicitly by
$$g(L,T^{\mathfrak{g}}(X_{1},\cdots,X_{k}))=(LT)(X_{1},\cdots,X_{k})$$
for all $L\in\mathfrak{g}\subset \mathfrak{so}(V)=\La^{2}V$.
\end{definition}
The following norms and inner products for tensors, whose components are with respect to an arbitrary choice of an orthonormal basis, will be used throughout:\\
For $T\in\otimes^{k}V^{\ast}$, define
$$|T|^{2}=\sum_{i_{1},\cdots,i_{k}}(T_{i_{1}\cdots i_{k} })^{2}.$$
For a $p$-form $\w\in\La^{p}V^{\ast}$, define
$$|\w|^{2}=\sum_{i_{1},\cdots,i_{p}}(\w_{i_{1}\cdots i_{p} })^{2}.$$
Let $$Sym^{2}(V)\subset\otimes^{2}V^{\ast}$$
denote the space of symmetric $(0,2)$-tensors on $V$.
\begin{definition}
	The Kulkarni-Nomizu product of $S,T \in Sym^{2}(V)$ is given by
	\begin{equation*}
	\begin{split}
	(S\odot T )(X,Y,Z,W)&=S(X,Z)T(Y,W)-S(X,W)T(Y,Z)\\
	&+S(Y,W)T(X,Z)-S(Y,Z)T(X,W).\\
	\end{split}
	\end{equation*}
In particular, the metric tensor $g$ satisfies
	$$(g\odot g )(X,Y,Z,W)=2\big{\{}g(X,Z)g(Y,W)-g(X,W)g(Y,Z)\big{\}}.$$
\end{definition}

\subsection{A decomposition of the curvature operator}
Let $(M,g)$ be an oriented, compact Riemannian manifold of dimension $n\geq3$. The Riemann curvature tensor $Rm$ is defined by
$$Rm(X,Y)Z=\na_{X}\na_{Y}Z-\na_{Y}\na_{X}Z+\na_{[X,Y]}Z$$
where $\na$ is the Levi-Civita connection. For a $(0,k)$-tensor $T$ set
$$\mathfrak{Ric}(T)(X_{1},\cdots,X_{k})=\sum_{i=1}^{k}\sum_{j=1}^{n}(Rm(X_{i},e_{j})T)(X_{1},\cdots,e_{j},\cdots,X_{k}),$$
where $\{e_{1},\cdots,e_{n}\}$ is a local orthonormal frame (cf.\cite{PW21a}).

The Schouten tensor of $g$, denote by $S_{g}$, is given by
\begin{equation}
S_{g}=\frac{1}{n-2}(Ric_{g}-\frac{R_{g}}{2(n-1)}\cdot g),
\end{equation}
where $Ric_{g}$ and $R_{g}$ are the Ricci tensor and scalar curvature of $g$ respectively. The Riemann curvature tensor admits a decomposition into its Weyl tensor $\mathcal{W}_{g}$ and a term involving the Schouten tensor:
\begin{equation}\label{E1}
Rm=S_{g}\odot g+\mathcal{W}_{g},
\end{equation}
where $\odot$ denotes the Kulkarni-Nomizu product. 

The associated algebraic curvature operator $\mathfrak{R}:\La^{2}TM\rightarrow\La^{2}T(M)$ is defined via
$$g(\mathfrak{R}(X\wedge Y),Z\wedge W)=Rm(X,Y,Z,W).$$
Special assumptions on $\mathfrak{R}$ also have geometric consequences, for example:
\begin{itemize}
\item[(i)] a compact, connected and oriented Riemannian manifold with positive curvature operator $\mathfrak{R}$ has the rational homology of the sphere (cf. \cite{Mey71});
\item[(ii)] the positivity of $\mathfrak{R}$ implies that the Gauss-Bonnet-Chern integrand is positive (cf. \cite{Kul72}).
\end{itemize}
The Gauss-Bonnet-Chern formula for $2n$-dimensional manifolds reveals a fundamental connection between curvature eigenvalues and Euler characteristic (cf.\cite{BK78,Kul72}). We then have
\begin{proposition}\label{P2}
	Let $(M,g)$ be a compact $2n$-dimensional Riemannian manifold. 
	\begin{itemize}
		\item[(i)]
		If curvature operator $\mathfrak{R}$ is nonnegative, then the Euler characteristic of $M$ obeys $\chi(M)>0$. Furthermore, if  $\mathfrak{R}$ is positive, then the Betti numbers of $M$ are rational (co)homology spheres.
		\item[(ii)] If  curvature operator $\mathfrak{R}$ is nonpositive, then the Euler characteristic of $M$ obeys $(-1)^{n}\chi(M)\geq0$. Furthermore, if  $\mathfrak{R}$ is negative, then $(-1)^{n}\chi(M)>0$. 
	\end{itemize}
\end{proposition}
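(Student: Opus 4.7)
The plan is to invoke the Gauss--Bonnet--Chern theorem, which expresses $\chi(M)$ as the integral over $M$ of a universal polynomial of degree $n$ in the components of $Rm$ (the Pfaffian of the curvature forms, up to a dimensional normalization). The essential algebraic input, which I would cite from Kulkarni \cite{Kul72} (see also \cite{BK78}), is that after diagonalizing $\mathfrak{R}$ at a point in an orthonormal basis of $\Lambda^{2}T_pM$, with eigenvalues $\mu_{1},\dots,\mu_{N}$ where $N=n(2n-1)$, the Gauss--Bonnet--Chern integrand admits an expansion
$$c_{n}\sum_{I=\{i_{1},\dots,i_{n}\}}a_{I}\,\mu_{i_{1}}\mu_{i_{2}}\cdots\mu_{i_{n}},$$
with nonnegative combinatorial coefficients $a_{I}\ge 0$ and a positive dimensional constant $c_{n}>0$.

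Granted this identity, both statements reduce to an elementary sign analysis. For (i), if $\mathfrak{R}\ge 0$ then every $\mu_{i}\ge 0$, so each $n$-fold product is nonnegative and the integrand is pointwise nonnegative, giving $\chi(M)\ge 0$, with strict positivity when $\mathfrak{R}>0$ because then every $\mu_{i}>0$ and the integrand is strictly positive somewhere. For (ii), if $\mathfrak{R}\le 0$ then all $\mu_{i}\le 0$ and each product of $n$ nonpositive numbers has sign $(-1)^{n}$, so the integrand has sign $(-1)^{n}$, yielding $(-1)^{n}\chi(M)\ge 0$ with strict inequality under $\mathfrak{R}<0$.

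For the rational (co)homology sphere statement under strict positivity of $\mathfrak{R}$, I would appeal to Meyer's Bochner-type theorem \cite{Mey71}. The Weitzenb\"ock identity for the Hodge Laplacian on $p$-forms contains the curvature term built from $\mathfrak{Ric}$, which acts as a positive operator on $\Lambda^{p}TM$ for $1\le p\le 2n-1$ whenever $\mathfrak{R}>0$. Pairing the Weitzenb\"ock formula with a harmonic $p$-form and integrating over the compact $M$ then forces the form to vanish, so $b_{p}(M)=0$ in this range, which is precisely the Betti-number profile of $S^{2n}$.

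The only substantive obstacle is Kulkarni's combinatorial identity itself: the Pfaffian is a priori a polynomial in the components of $Rm$ with signed coefficients coming from permutations, and rewriting it as a manifestly nonnegative combination of products of eigenvalues of $\mathfrak{R}$ requires a careful grouping of permutation pairs that exploits the symmetries of $Rm$ on $\Lambda^{2}\otimes\Lambda^{2}$. Since this rearrangement is precisely the content of \cite{Kul72,BK78}, I would quote it as a black box; everything else in the proof reduces to the sign bookkeeping above together with the standard Bochner argument.
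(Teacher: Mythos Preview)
Your approach is precisely the one the paper itself takes: the proposition is stated without proof and attributed to the same sources you invoke---Kulkarni \cite{Kul72} and Bourguignon--Karcher \cite{BK78} for the sign of the Gauss--Bonnet--Chern integrand under a sign condition on $\mathfrak{R}$, and Meyer \cite{Mey71} for the vanishing of $b_{p}$ under $\mathfrak{R}>0$. So there is no divergence in method to report.

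One point worth flagging: your sign analysis yields only $\chi(M)\ge 0$ when $\mathfrak{R}\ge 0$, whereas the proposition as stated claims the strict inequality $\chi(M)>0$. You have not closed that gap, and in fact you cannot: the flat torus $T^{2n}$ has $\mathfrak{R}\equiv 0$ (hence nonnegative) but $\chi(T^{2n})=0$. The strict inequality in part~(i) appears to be a misprint in the paper (note the asymmetry with part~(ii), which correctly states $(-1)^{n}\chi(M)\ge 0$ for $\mathfrak{R}\le 0$). Your argument proves the correct version of the statement.
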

	\begin{lemma}\label{P5}
		Let $(M,g)$ be a compact manifold of dimension $n$, $n\geq4$, and let $\{e_{i}\}_{1,\cdots,n}$, be an orthonormal eigenbasis of the Ricci curvature with corresponding eigenvalues $\{\la_{i} \}_{1,\cdots,n}$. Then the curvature operator $\mathfrak{S}:\La^{2}TM\rightarrow\La^{2}TM$ corresponding to $S_{g}\odot g$ acts on $2$-forms as
		\begin{equation*}
		\mathfrak{S}(e_{i}\wedge e_{j})=\frac{1}{n-2}\left(\la_{i}+\la_{j}-\frac{R_{g}}{n-1}\right)(e_{i}\wedge e_{j}).
		\end{equation*}
		It implies that $\{e_{i}\wedge e_{j}\}_{1\leq i<j\leq n}$ is an  an eigenbasis of $\mathfrak{S}$ with eigenvalues
		$\la_{ij}:=\frac{1}{n-2}(\la_{i}+\la_{j}-\frac{R_{g}}{n-1})$.
	\end{lemma}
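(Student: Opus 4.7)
The plan is a straightforward computation: unfold the definition of the Kulkarni--Nomizu product against the basis vectors $e_i,e_j$, use the fact that the eigenbasis of $\mathrm{Ric}_g$ simultaneously diagonalizes the Schouten tensor $S_g$, and then read off the matrix coefficients of $\mathfrak{S}$ in the basis $\{e_i\wedge e_j\}_{i<j}$ of $\Lambda^2 TM$.

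First I would record that since $\{e_i\}$ diagonalizes $\mathrm{Ric}_g$ with eigenvalues $\lambda_i$, it also diagonalizes $S_g$, with
\[
S_g(e_i,e_j)=\mu_i\,\delta_{ij},\qquad \mu_i:=\frac{1}{n-2}\Bigl(\lambda_i-\frac{R_g}{2(n-1)}\Bigr).
\]
Plugging $X=e_i,Y=e_j,Z=e_k,W=e_l$ into the definition
\[
(S_g\odot g)(X,Y,Z,W)=S_g(X,Z)g(Y,W)-S_g(X,W)g(Y,Z)+S_g(Y,W)g(X,Z)-S_g(Y,Z)g(X,W)
\]
and using $g(e_a,e_b)=\delta_{ab}$ together with the diagonal form of $S_g$, the four terms collapse to
\[
(S_g\odot g)(e_i,e_j,e_k,e_l)=(\mu_i+\mu_j)\bigl(\delta_{ik}\delta_{jl}-\delta_{il}\delta_{jk}\bigr),
\]
where on the nonzero terms the Kronecker deltas force $\mu_k=\mu_i$, $\mu_l=\mu_j$ (or their swaps), so combining them is legitimate.

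Next I would translate this into a statement about $\mathfrak{S}$. By the defining relation $g(\mathfrak{S}(e_i\wedge e_j),e_k\wedge e_l)=(S_g\odot g)(e_i,e_j,e_k,e_l)$ and the induced inner product
\[
g(e_i\wedge e_j,e_k\wedge e_l)=\delta_{ik}\delta_{jl}-\delta_{il}\delta_{jk},
\]
we obtain
\[
g\bigl(\mathfrak{S}(e_i\wedge e_j),e_k\wedge e_l\bigr)=(\mu_i+\mu_j)\,g(e_i\wedge e_j,e_k\wedge e_l).
\]
Since $\{e_k\wedge e_l\}_{k<l}$ is an orthonormal basis of $\Lambda^2 TM$, running $(k,l)$ over all indices $k<l$ determines all matrix entries and yields
\[
\mathfrak{S}(e_i\wedge e_j)=(\mu_i+\mu_j)(e_i\wedge e_j)=\frac{1}{n-2}\Bigl(\lambda_i+\lambda_j-\frac{R_g}{n-1}\Bigr)(e_i\wedge e_j),
\]
which is exactly $\lambda_{ij}$. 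There is no real obstacle here; the only points requiring care are bookkeeping the indexing convention $i<j$ so that $\{e_i\wedge e_j\}$ is an \emph{orthonormal} basis (not overcounted), and correctly interpreting the Kulkarni--Nomizu symmetrization, which is responsible for the factor $(\mu_i+\mu_j)$ rather than $\mu_i$ alone.
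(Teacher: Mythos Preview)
Your proof is correct and follows essentially the same approach as the paper: both unfold the Kulkarni--Nomizu product $(S_g\odot g)(e_i,e_j,e_k,e_l)$ in the Ricci eigenbasis to obtain $(\mu_i+\mu_j)(\delta_{ik}\delta_{jl}-\delta_{il}\delta_{jk})$ and then read off the matrix of $\mathfrak{S}$. The only cosmetic difference is that you diagonalize $S_g$ first and introduce its eigenvalues $\mu_i$, whereas the paper computes $(\mathrm{Ric}_g\odot g)$ directly and substitutes into the Schouten formula afterward.
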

	\begin{proof}
		The $\{e_{i}\}_{i=1,\cdots,n}$ is an orthonormal eigenbasis of Ricci curvature tensor $Ric_{g}$, which satisfies
		\begin{equation*}
		Ric_{g}(e_{i})=\la_{i}e_{i},\ \quad g(e_{i},e_{j})=\de_{ij}.
		\end{equation*}
		By the definition of  Kulkarni-Nomizu product, we compute
		\begin{equation}\label{Ricg1}
		\begin{split}
		(Ric\odot g)(e_{i},e_{j},e_{k},e_{l})&=Ric_{g}(e_{i},e_{k})g(e_{j},e_{l})-Ric_{g}(e_{i},e_{l})g(e_{j},e_{k})\\
		&+Ric_{g}(e_{j},e_{l})g(e_{i},e_{k})-Ric_{g}(e_{j},e_{k})g(e_{i},e_{l})\\
		&=\la_{i}\de_{ik}\de_{jl}-\la_{i}\de_{il}\de_{jk}+\la_{j}\de_{jl}\de_{ik}-\la_{j}\de_{jk}\de_{il}\\
		&=(\la_{i}+\la_{j})(\de_{ik}\de_{jl}-\de_{il}\de_{jk}).\\
		\end{split}
		\end{equation}
		Substituting \eqref{Ricg1} into $S_{g}\odot g$, we obtain
		\begin{equation*}
		\begin{split}
		(S_{g}\odot g)(e_{i},e_{j},e_{k},e_{l})&=\left[\frac{1}{n-2}\left(Ric_{g}-\frac{R_{g}}{2(n-1)}\cdot g\right)\odot g\right](e_{i},e_{j},e_{k},e_{l})\\
		&=\frac{1}{n-2}\left(\la_{i}+\la_{j}-\frac{R_{g}}{n-1}\right)(\de_{ik}\de_{jl}-\de_{il}\de_{jk}).
		\end{split}
		\end{equation*}	
		For the curvature operator $\mathfrak{S}:\La^{2}TM\rightarrow\La^{2}TM$ associated to $S_{g} \odot g$ and $\w=\sum_{i,j}\w_{ij}e_i\wedge e_j$,  the $e_{k}\wedge e_{l}(k<l)$ component of $\mathfrak{S}(\w)$  is
		$$ (\mathfrak{S}(\w))_{kl}=\sum_{i,j}(S_{g}\odot g)(e_{i},e_{j},e_{k},e_{l})\w_{ij}.$$
		Thus
		$$\mathfrak{S}(e_{i}\wedge e_{j})_{kl}=\frac{1}{n-2}(\la_{i}+\la_{j}-\frac{R_{g}}{n-1})(\de_{ik}\de_{jl}-\de_{il}\de_{jk}).$$
		\begin{itemize}
			\item if $(i,j)=(k,l)$, then  
			$$\mathfrak{S}(e_{i}\wedge e_{j})_{ij}=\frac{1}{n-2}(\la_{i}+\la_{j}-\frac{R_{g}}{n-1}).$$
			\item if $(i,j)\neq (k,l)$, then
			$$\mathfrak{S}(e_{i}\wedge e_{j})_{kl}=0.$$
		\end{itemize}
		Since  
		$\{e_{i}\wedge e_{j}\}_{1\leq i<j\leq n}$ forms an orthonormal basis for $\La^{2}TM$, it follows that this set is an eigenbasis of $\mathfrak{S}$ with eigenvalues
		$\la_{ij}:=\frac{1}{n-2}(\la_{i}+\la_{j}-\frac{R_{g}}{n-1})$, which proves the lemma.
	\end{proof}
	\begin{proof}[\textbf{Proof of Theorem \ref{T3}}]
		
		Using Lemma \ref{P5}, we obtain the following relations for the curvature operator $\mathfrak{R}=\mathfrak{S}+\mathfrak{W}$ acting on the orthonormal basis $\{e_{i}\wedge e_{j}\}_{1\leq i<j\leq n}$ of $\Lambda^2 TM$:
		\begin{itemize}
			\item For the diagonal components:
			\begin{equation*}
			\langle \mathfrak{R}(e_i\wedge e_j), e_i\wedge e_j \rangle =\langle \mathfrak{S}(e_i\wedge e_j), e_i\wedge e_j \rangle+\langle \mathfrak{W}(e_i\wedge e_j), e_i\wedge e_j \rangle= \lambda_{ij} + \mathcal{W}_{ijij} = Rm(e_i,e_j,e_i,e_j),
			\end{equation*}
			where $\lambda_{ij} = \frac{1}{n-2}(\lambda_i + \lambda_j - \frac{R_g}{n-1})$ are the eigenvalues of $\mathfrak{S}$ as in Lemma \ref{P5}, and $\mathcal{W}_{ijij}$ are the corresponding components of the Weyl tensor by the definition of $\mathfrak{W}$.
			\item For the off-diagonal components when $(i,j) \neq (k,l)$:
			\begin{equation*}
			\langle \mathfrak{R}(e_i\wedge e_j), e_k\wedge e_l \rangle=\langle \mathfrak{W}(e_i\wedge e_j), e_k\wedge e_l \rangle = \mathcal{W}_{ijkl},
			\end{equation*}
			since $\langle \mathfrak{S}(e_i\wedge e_j), e_k\wedge e_l \rangle=0$. This implies that the off-diagonal terms of the curvature operator are completely determined by the Weyl tensor.	
		\end{itemize}
		This completes the proof of the theorem.	
	\end{proof}

\subsection{Curvature operator of  locally conformally flat manifold }
By the conformal invariance of the Weyl tensor, the study of conformal metric deformations reduces to understanding the Schouten tensor (see \cite{GLW04,GLW05,GVW03,GW04,Gur94}). For locally conformally flat manifolds, the curvature tensor decomposes simply as:
\begin{equation*}
Rm=S_{g}\odot g.
\end{equation*}
\begin{corollary}\label{C5}
	Let $(M,g)$ be a compact locally conformally flat manifold, $n\geq3$. Let $\la_{1}\leq\cdots\la_{n}$ be the eigenvalues of Ricci curvature tensor. Then the eigenvalues of curvature operator are 
	$$\{\la_{ij}:=\frac{1}{n-2}\bigg{(}\la_{i}+\la_{j}-\frac{1}{n-1}R_{g}\bigg{)} \}_{1\leq i<j\leq n}.$$ 
\end{corollary}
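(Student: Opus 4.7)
The plan is to exploit the vanishing of the Weyl tensor in the locally conformally flat setting and then invoke the diagonalization established in Lemma \ref{P5}. First I would recall that a locally conformally flat manifold of dimension $n \geq 3$ is characterized (for $n\geq 4$) by $\mathcal{W}_g = 0$ pointwise, and that in dimension $n=3$ the Weyl tensor vanishes identically on any Riemannian manifold. Consequently the decomposition \eqref{E1} reduces to $Rm = S_g \odot g$, which at the level of curvature operators means $\mathfrak{W} = 0$ and hence $\mathfrak{R} = \mathfrak{S}$.

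Next I would fix an orthonormal eigenbasis $\{e_i\}_{i=1,\dots,n}$ of the Ricci tensor corresponding to the eigenvalues $\lambda_1 \leq \cdots \leq \lambda_n$. By Lemma \ref{P5}, the operator $\mathfrak{S}$ acts diagonally on the induced orthonormal basis $\{e_i \wedge e_j\}_{1 \leq i < j \leq n}$ of $\Lambda^2 TM$, with eigenvalues
\[
\lambda_{ij} = \frac{1}{n-2}\left(\lambda_i + \lambda_j - \frac{R_g}{n-1}\right).
\]
Combining this with the identification $\mathfrak{R} = \mathfrak{S}$ immediately yields that $\{e_i \wedge e_j\}_{i<j}$ is an eigenbasis of $\mathfrak{R}$ with the stated eigenvalues $\{\lambda_{ij}\}_{1 \leq i < j \leq n}$.

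Since the argument is essentially a direct corollary of Theorem \ref{T3} (whose off-diagonal terms are precisely $\mathcal{W}_{ijkl}$ and therefore vanish here) together with Lemma \ref{P5}, there is no substantive obstacle. The only small point worth addressing explicitly is the dimension threshold: in dimension $n=3$ the Weyl tensor vanishes automatically so the hypothesis ``locally conformally flat'' is redundant and the formula still makes sense, while for $n \geq 4$ the condition $\mathcal{W}_g \equiv 0$ is the genuine content of local conformal flatness. No further calculation is required beyond citing the preceding results.
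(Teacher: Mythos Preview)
Your proposal is correct and follows essentially the same approach as the paper, which simply states that the result follows immediately from Lemma~\ref{P5}. You have spelled out the one-line argument in more detail (the vanishing of $\mathcal{W}_g$ reduces $\mathfrak{R}$ to $\mathfrak{S}$, whose eigenvalues were computed in Lemma~\ref{P5}), and your remark on the $n=3$ case is a helpful clarification the paper omits.
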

\begin{proof}
This follows immediately from Proposition \ref{P5}.
\end{proof}	
\begin{remark}
	While the complete ordering of $\{\la_{ij}\}$ is unavailable, we can identify:
	\begin{itemize}
		\item Minimal two eigenvalues: $\la_{12}<\la_{13}$;
		\item Maximal two eigenvalues: $\la_{(n-2)n}<\la_{(n-1)n}$.
	\end{itemize}		
\end{remark}
\begin{proposition}(cf.\cite{Pet16})\label{P1}
	Let $e_{i}$ be an orthonormal basis for $T_{p}M$. If $e_{i}\wedge e_{j}$ diagonalize the curvature operator 
	$$\mathfrak{R}(e_{i}\wedge e_{j})=\la_{ij}e_{i}\wedge e_{j}$$
	then for any 2-plane $\pi \subset T_pM$, the sectional curvature satisfies
	$$sec(\pi)\in[\min{\la_{ij}},\max\la_{ij}].$$
\end{proposition}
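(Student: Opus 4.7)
The plan is to reduce the statement to a direct linear-algebra computation in $\Lambda^{2}T_{p}M$, using the fact that the diagonalizing basis $\{e_{i}\wedge e_{j}\}_{i<j}$ is an \emph{orthonormal} basis of $\Lambda^{2}T_{p}M$ (as already noted in the paper), so that the sectional curvature of an arbitrary 2-plane decomposes as a convex combination of the eigenvalues $\lambda_{ij}$.

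First I would fix a 2-plane $\pi\subset T_{p}M$ and choose any orthonormal basis $\{X,Y\}$ of $\pi$. By definition of the curvature operator,
\begin{equation*}
\sec(\pi)=Rm(X,Y,X,Y)=\langle \mathfrak{R}(X\wedge Y),X\wedge Y\rangle.
\end{equation*}
Next I would expand $X\wedge Y$ in the eigenbasis,
\begin{equation*}
X\wedge Y=\sum_{i<j}a_{ij}\,e_{i}\wedge e_{j},\qquad a_{ij}=\langle X\wedge Y,e_{i}\wedge e_{j}\rangle,
\end{equation*}
and use the fact that, since $X,Y$ are orthonormal in $T_{p}M$, the simple 2-vector $X\wedge Y$ has unit norm in $\Lambda^{2}T_{p}M$; hence $\sum_{i<j}a_{ij}^{2}=|X\wedge Y|^{2}=1$.

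Then, substituting the expansion into the bilinear form and using $\mathfrak{R}(e_{i}\wedge e_{j})=\lambda_{ij}e_{i}\wedge e_{j}$ together with orthonormality of $\{e_{i}\wedge e_{j}\}_{i<j}$, I would obtain
\begin{equation*}
\sec(\pi)=\sum_{i<j}\lambda_{ij}\,a_{ij}^{2}.
\end{equation*}
Since the coefficients $a_{ij}^{2}$ are nonnegative and sum to one, this expresses $\sec(\pi)$ as a convex combination of the eigenvalues $\lambda_{ij}$, and the desired bound $\sec(\pi)\in[\min_{i<j}\lambda_{ij},\max_{i<j}\lambda_{ij}]$ follows immediately.

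There is no real obstacle; the only subtlety worth flagging is that $X\wedge Y$ is a \emph{decomposable} (simple) 2-vector of unit norm, so one does not need any statement about arbitrary unit elements of $\Lambda^{2}T_{p}M$ — the sectional curvature is a value of the quadratic form associated with $\mathfrak{R}$ on decomposable unit 2-vectors, and the spectral bound of a symmetric operator on its own orthonormal eigenbasis suffices.
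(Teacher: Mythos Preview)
Your proof is correct and follows exactly the same approach as the paper: write $\sec(\pi)=\langle\mathfrak{R}(U\wedge V),U\wedge V\rangle$ for an orthonormal basis $\{U,V\}$ of $\pi$ and invoke the spectral decomposition of $\mathfrak{R}$. The paper simply states that the result ``follows immediately from the spectral decomposition,'' while you have spelled out the convex-combination computation in full detail.
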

\begin{proof}
	Given any orthonormal basis ${U,V}$ for $\pi$, we have 
	$$\sec(\pi)=g(\mathfrak{R}(U\wedge V),(U\wedge V)),$$
The result follows immediately from the spectral decomposition of $\mathfrak{R}$. 
\end{proof}
\begin{theorem}\label{T4}
	Let $(M,g)$ be a compact locally conformally flat manifold of dimension $n$, $n\geq4$. 
	\begin{itemize}
	\item[(1)]
	If the metric $g$ has positive sectional curvature, then the Betti numbers satisfy
 $$b_{p}(X)=0,\ \forall1\leq p\leq n-1.$$
In particular, if $n$ is even, then $\chi(M)=2$.
\item[(2)] If the metric $g$ has negative (resp. nonpositive) sectional curvature and $n$ is even, then Euler characteristic  of $M$ obeys
$$(-1)^{\frac{n}{2}}\chi(M)>0 (resp.\geq0).$$
\end{itemize} 
\end{theorem}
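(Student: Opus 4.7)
\textbf{Proof plan for Theorem \ref{T4}.} The strategy is to observe that for a locally conformally flat manifold, Corollary \ref{C5} already diagonalizes $\mathfrak{R}$ in the orthonormal basis $\{e_i\wedge e_j\}_{i<j}$ coming from an eigenbasis of $Ric_g$, with eigenvalues $\lambda_{ij}=\frac{1}{n-2}(\lambda_i+\lambda_j-\frac{R_g}{n-1})$. So the Weyl tensor is gone and the problem reduces to transferring a sectional curvature hypothesis into a sign for $\mathfrak{R}$, after which Proposition \ref{P2} immediately delivers the topological conclusions.

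\textbf{Step 1 (sign of $\mathfrak{R}$ from sectional curvature).} Fix any point $p\in M$ and choose the Ricci-eigenbasis $\{e_i\}$ given by Corollary \ref{C5}. Then the diagonal entry of $\mathfrak{R}$ on $e_i\wedge e_j$ is
\begin{equation*}
\langle\mathfrak{R}(e_i\wedge e_j),e_i\wedge e_j\rangle=\lambda_{ij}=Rm(e_i,e_j,e_i,e_j)=\sec(e_i\wedge e_j),
\end{equation*}
so each eigenvalue of $\mathfrak{R}$ is itself a sectional curvature. (Alternatively, this is exactly Proposition \ref{P1} applied to both $\min\lambda_{ij}$ and $\max\lambda_{ij}$.) Consequently, if $\sec_g>0$ everywhere then $\lambda_{ij}>0$ for every $i<j$ and at every point, so $\mathfrak{R}$ is pointwise positive definite; analogously $\sec_g<0$ gives $\mathfrak{R}<0$ and $\sec_g\le 0$ gives $\mathfrak{R}\le 0$.

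\textbf{Step 2 (topological conclusions).} For part (1), positivity of $\mathfrak{R}$ combined with Proposition \ref{P2}(i) (i.e., Meyer's theorem) forces $M$ to be a rational homology sphere, hence $b_p(M)=0$ for every $1\le p\le n-1$; when $n$ is even this yields $\chi(M)=b_0+b_n=2$. For part (2), negativity (resp. nonpositivity) of $\mathfrak{R}$ together with Proposition \ref{P2}(ii) gives $(-1)^{n/2}\chi(M)>0$ (resp. $\ge 0$), which is precisely the claim for even $n$.

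\textbf{Main obstacle.} In truth there is no serious obstacle once Corollary \ref{C5} and Proposition \ref{P2} are in hand: the only point requiring minimal care is verifying that a sign hypothesis on \emph{all} sectional curvatures really does control \emph{every} eigenvalue $\lambda_{ij}$, but this is immediate since the eigenbasis is $\{e_i\wedge e_j\}$ and each $\lambda_{ij}$ coincides with the sectional curvature of the coordinate $2$-plane $\mathrm{span}(e_i,e_j)$. The rest is a direct citation of Proposition \ref{P2}.
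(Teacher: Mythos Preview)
Your proposal is correct and follows essentially the same approach as the paper: use Corollary \ref{C5} (together with the observation that each eigenvalue $\lambda_{ij}$ equals the sectional curvature of $\mathrm{span}(e_i,e_j)$, as in Proposition \ref{P1}) to transfer the sign of $\sec_g$ to the sign of $\mathfrak{R}$, then invoke Proposition \ref{P2}. The paper's proof is slightly more terse, but the logic is identical.
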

\begin{proof}

	\begin{itemize}
		\item[(1)]Since the sectional curvature of $g$ is positive, we know the curvature operator $\mathfrak{R}$ is also positive. By a result of Meyer \cite{Mey71}, the Betti numbers of $M$ are rational (co)homology spheres.
		\item[(2)]If the sectional curvature of $g$ is negative (resp. nonpositive), then the curvature operator $\mathfrak{R}$ is negative (resp. nonpositive).  By a well know result (cf.\cite{BK78,Kul72} or see Proposition \ref{P2}), the Euler characteristic  of $M$ obeys $(-1)^{\frac{n}{2}}\chi(M)>0$ (resp. $\geq0$).
	\end{itemize}

\end{proof}

\subsection{$k$th-intermediate Ricci curvature}
Let $(M,g)$ be an $n$-dimensional Riemannian  manifold. Suppose that the curvature operator $\mathfrak{R}$ having eigenvalues $\mu_{1}\leq\mu_2\leq\cdots\leq\mu_{N}$, where $N=\frac{n(n-1)}{2}$. We say that  $\mathfrak{R}$ is $k$-positive (resp. $k$-nonnegative) with $1\leq k\leq N-1$ (cf.\cite{NPW23,NPWW23,PW21a}) if
	$$\mu_{1}+\mu_{2}+\cdots\mu_{k}\geq0\ (\textrm{resp.} \geq 0).$$
Under a lower bound condition on the average of the lowest $(n-p)$ eigenvalues of the curvature operator, Petersen and Wink \cite{PW21a} established many general vanishing theorems and estimation theorems for the $p$-th Betti number.
\begin{definition}(cf.\cite{KM18,Mou22,RS05})\label{D2}
We say an $n$-dimensional Riemannian manifold $(M,g)$ has positive $k$th-intermediate
Ricci curvature for $k\in\{1,\cdots,n-1\}$ if, for any choice of orthonormal vectors $\{u,e_{1},\cdots,e_{n-1}\}$, the
sum of sectional curvatures
$$\sum_{i=1}^{k}\sec_{g}(u,e_{i})=\sum_{i=1}^{k}Rm(u,e_{i},u,e_{i})$$
is positive. We abbreviate this condition by writing $Ric_{k}(M,g)>0$, omitting $M$ or $g$ when they are understood.
\end{definition}
Note that $Ric_{1}>0$ is equivalent to positive sectional curvature, and $Ric_{n-1}>0$ is equivalent to positive Ricci curvature. Furthermore, if $Ric_{k}>0$, then $Ric_{l}>0$ for all $l
\geq k$. Thus $Ric_{2}>0$ is a strong condition on curvature in this hierarchy, second only to positive sectional curvature.

\begin{theorem}
Let $(M,g)$ be a compact locally conformally flat manifold of dimension $n$, $n\geq4$. If $M$ has positive $k$th-intermediate
Ricci curvature, then the curvature operator of $M$ is $k$-positive. Moreover
	\begin{itemize}
		\item [(1)]If $1\leq k\leq \lceil\frac{n}{2}\rceil$, then $b_{p}(M)=0$ for all $1\leq p\leq n-1$;
		\item [(2)] If $ \lceil\frac{n}{2}\rceil+1\leq k\leq n-1$, then 
		$b_{1}(M)=\cdots=b_{n-k}(M)$ for all $1\leq p\leq n-k$.
	\end{itemize}
\end{theorem}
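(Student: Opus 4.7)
Since $M$ is locally conformally flat, $\mathcal{W}_g = 0$, and Corollary~\ref{C5} gives that the curvature operator $\mathfrak{R}$ is diagonal in the basis $\{e_i \wedge e_j\}_{i<j}$ of $\Lambda^2 TM$ (where $\{e_i\}$ is a Ricci eigenbasis), with eigenvalues $\lambda_{ij} = \frac{1}{n-2}(\lambda_i + \lambda_j - R_g/(n-1))$. By Proposition~\ref{P1}, these eigenvalues are precisely the sectional curvatures $\sec(e_i,e_j)$ of the coordinate $2$-planes. The task therefore reduces to two algebraic problems: establishing that $\mathfrak{R}$ is $k$-positive, and then deducing the Betti-number conclusions from that.

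To prove $k$-positivity, I would specialize the hypothesis $\operatorname{Ric}_k > 0$ to the Ricci eigenbasis. Taking $u = e_j$ and any orthonormal $k$-tuple from the remaining basis vectors produces the family of ``star inequalities''
$$\sum_{i \in I} \lambda_{ij} > 0, \qquad j \in \{1,\dots,n\},\; I \subset \{1,\dots,n\}\setminus\{j\},\; |I|=k.$$
Writing $\mu_a := \lambda_a - \tfrac{R_g}{2(n-1)}$ so that $\lambda_{ij} = \tfrac{1}{n-2}(\mu_i+\mu_j)$, the sum of the $k$ smallest eigenvalues of $\mathfrak{R}$ rewrites as $\tfrac{1}{n-2}\sum_a d_a \mu_a$, where $d_a$ is the degree of vertex $a$ in the graph formed by the $k$ minimizing pairs (so $\sum_a d_a = 2k$). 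When this graph is a star the conclusion is immediate from a single star inequality; in general one combines several star inequalities via a weighted average whose coefficients reproduce the degree sequence $\{d_a\}$, with extra mass placed on the vertices of smallest Ricci eigenvalue.

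Once $\mathfrak{R}$ is $k$-positive, parts (1) and (2) follow from the Petersen--Wink vanishing/rigidity theorems referenced in Section~2.4: for $k \leq \lceil n/2 \rceil$ one obtains $b_p(M)=0$ for every $1\leq p \leq n-1$, while for $\lceil n/2 \rceil + 1 \leq k \leq n-1$ one obtains the chain of equalities $b_1(M) = \cdots = b_{n-k}(M)$. Equivalently, both Betti-number assertions can be read off directly from Theorem~\ref{T6}, whose Weyl integral hypothesis is vacuously satisfied in the locally conformally flat setting.

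The main obstacle is the combinatorial step in the second paragraph: the $k$ smallest $\lambda_{ij}$'s need not correspond to pairs sharing a common vertex---a triangle-type configuration on three indices can, for instance, enter before the fourth edge of a star---so no single star inequality suffices. The weighted averaging must be calibrated so that the degree sequence $\{d_a\}$ is recovered with the correct signs, and the case split $k \leq \lceil n/2 \rceil$ versus $k > \lceil n/2 \rceil$ enters naturally at exactly this point, since beyond $\lceil n/2 \rceil$ the minimizing graph is forced to contain non-star components and only the weaker Betti-equality conclusion of part~(2) survives.
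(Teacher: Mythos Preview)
Your strategy coincides with the paper's: diagonalize $\mathfrak{R}$ via Corollary~\ref{C5}, extract the star inequalities from $\operatorname{Ric}_k>0$, conclude $k$-positivity, and cite Petersen--Wink. The paper's proof executes exactly these steps but compresses the $k$-positivity argument into the single unjustified line
\[
\sum_{i=1}^k\mu_i \;=\; \min\Bigl\{\textstyle\sum_{l=1}^{k}\sec(e_{i_l},e_{j_l})\ :\ (i_l,j_l)\ \text{distinct pairs}\Bigr\}\;\geq\;0,
\]
so your flagging of the combinatorial step as ``the main obstacle'' is entirely apt; the paper simply does not address it.

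Unfortunately the gap is genuine, and your weighted-averaging scheme cannot close it because the pointwise implication is false. Take $n=6$, $k=3=\lceil n/2\rceil$, and a Schouten tensor with eigenvalues $a_1=a_2=a_3=-1$, $a_4=a_5=a_6=6$ in a Ricci eigenbasis, so that $\lambda_{ij}=a_i+a_j$. For any orthonormal $4$-frame $(u,v_1,v_2,v_3)$ one has $\operatorname{Ric}_3(u;v_1,v_2,v_3)=3\,S_g(u,u)+\sum_i S_g(v_i,v_i)\geq 2(-1)+3=1$ (minimize $(k-1)\lambda_{\min}(S_g|_W)+\operatorname{tr}(S_g|_W)$ over $4$-planes $W$), yet the triangle gives $\lambda_{12}+\lambda_{13}+\lambda_{23}=-6<0$, so $\mathfrak{R}$ is \emph{not} $3$-positive. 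The star inequalities control only quantities of the shape $k a_j+\sum_{i\in I}a_i$, and no nonnegative combination of these dominates $2(a_1+a_2+a_3)$ here; your intuition that non-star graphs appear only beyond $\lceil n/2\rceil$ is also off, since the triangle already enters at $k=\lceil n/2\rceil$. Thus the theorem's first assertion (``the curvature operator is $k$-positive'') is wrong as a pointwise algebraic statement, and the same defect underlies Proposition~\ref{P4}, so the fallback route via Theorem~\ref{T6} you mention inherits the identical gap. The Betti-number conclusions, if true, need an argument that does not pass through $k$-positivity of $\mathfrak{R}$.
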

\begin{proof}
Let $\{e_{i}\}_{i=1}^n$ be an orthonormal eigenbasis for the Ricci curvature. Following Corollary \ref{C5}, the sum of the smallest $k$ eigenvalues of the curvature operator satisfies
$$\sum_{i=1}^k\mu_i\geq\min\{\sum_{l=1}^{k}Rm(e_{i_{l}},e_{j_{l}},e_{i_{l}},e_{j_{l}} )\}\geq 0,$$
where the infimum is taken over all possible collections of $k$ distinct ordered pairs $(i_1,j_1), \dots, (i_k,j_k)$ with $i_l < j_l$ for each $1 \leq l \leq k$. Consequently, if a metric $g$ has positive $k$-th intermediate Ricci curvature, then its curvature operator is $k$-positive. The corresponding vanishing results for Betti numbers follow immediately from \cite[Theorem A]{PW21a}.
\end{proof}

\begin{corollary}
	Let $(M,g)$ be a compact locally conformally flat manifold manifold of dimension $n$, $n\geq3$. If the curvature condition
	 $$(\la_{1}+2\la_{2}+\la_{3})-\frac{2}{n-2}R_{g}$$ is quasi-positive, then 
	$M$ is diffeomorphic to a spherical space form. In particular, if $n=4$ or $6$, then $(M,g)$ is conformally equivalent to the sphere with its canonical metric.
\end{corollary}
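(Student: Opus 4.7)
The plan is to reinterpret the curvature hypothesis as $2$-quasi-positivity of the curvature operator and then invoke a Ricci-flow convergence theorem of B\"ohm-Wilking. By Corollary \ref{C5}, on the locally conformally flat manifold $(M,g)$ the eigenvalues of $\mathfrak{R}$ are exactly $\la_{ij} = \frac{1}{n-2}(\la_i + \la_j - \frac{R_g}{n-1})$ for $1 \le i < j \le n$. If we order the Ricci eigenvalues $\la_1 \le \la_2 \le \cdots \le \la_n$, then the two smallest eigenvalues of $\mathfrak{R}$ are $\la_{12}$ and $\la_{13}$, and a direct computation shows that the combination $(\la_1 + 2\la_2 + \la_3) - \frac{2}{n-2}R_g$ is, up to positive dimensional constants, proportional to a sum of two of the smallest $\la_{ij}$. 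Thus the quasi-positivity hypothesis is equivalent to $\mathfrak{R}$ being $2$-nonnegative on all of $M$ and strictly $2$-positive at some point.

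Next I would invoke the differentiable sphere theorem of B\"ohm-Wilking: a closed Riemannian manifold of dimension $n \ge 3$ whose curvature operator is strictly $2$-positive is diffeomorphic to a spherical space form $S^n/\Gamma$. The quasi-positive case is handled by running the Ricci flow starting from $g$: the cone of $2$-nonnegative algebraic curvature operators is invariant under the Hamilton reaction ODE, and Hamilton's strong maximum principle then propagates the strict $2$-positivity, valid at a single point of $M$, to all of $M$ for any positive time. Applying B\"ohm-Wilking to the evolved metric $g(t)$, which is diffeomorphic to $M$, gives $M \cong S^n/\Gamma$.

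For the low-dimensional case $n = 4$ or $n = 6$, the universal cover $\tilde M$ is compact, simply connected, diffeomorphic to $S^n$, and still locally conformally flat. By Kuiper's theorem on the developing map, $(\tilde M, \tilde g)$ is then conformally equivalent to the standard round sphere $(S^n, g_{\mathrm{round}})$, and the deck transformations act as conformal automorphisms of $S^n$. A rigidity argument specific to these low dimensions, combined with the $2$-quasi-positivity of $\mathfrak{R}$, rules out nontrivial quotients and yields that $(M,g)$ itself is conformally equivalent to $(S^n, g_{\mathrm{round}})$.

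The main obstacle I expect is the upgrade from $2$-quasi-positivity to strict $2$-positivity (which requires verifying cone invariance and the strong maximum principle in the Hamilton--B\"ohm-Wilking framework, well-established but technical) and, more seriously, the final conformal identification in dimensions $4$ and $6$, where one must ensure that the deck group acts trivially on the round sphere. This is where a delicate low-dimensional rigidity result (for instance, a Margerin-type pinching theorem in dimension four, and an analogue in dimension six) needs to be invoked to complete the argument.
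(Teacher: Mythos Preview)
Your first two steps match the paper: read the hypothesis as $2$-quasi-positivity of $\mathfrak{R}$, run Ricci flow to upgrade to strict $2$-positivity, then apply B\"ohm--Wilking to conclude that $M$ is diffeomorphic to a spherical space form. The paper makes the upgrade step precise by citing Ni--Wu \cite[Corollary 2.3]{NW07}, which is exactly the cone-invariance and strong-maximum-principle package you anticipate needing.

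For the $n=4,6$ conformal identification the paper takes a different and shorter route than your Kuiper-plus-rigidity sketch. It first observes that $2$-nonnegativity of $\mathfrak{R}$ forces $R_g\ge 0$, and then invokes Gursky \cite[Theorem~A]{Gur94}: for a compact locally conformally flat manifold of dimension $4$ or $6$ with nonnegative scalar curvature, one has $\chi(M)=2$ if and only if $(M,g)$ is conformally equivalent to the round sphere. Since the spherical-space-form conclusion from the first part already determines $\chi(M)$, Gursky's characterization closes the argument without any developing-map or deck-group analysis. Your Kuiper route would handle the universal cover, but the step you yourself flag as the real obstacle --- ruling out nontrivial quotients via an unspecified ``Margerin-type'' pinching --- is not needed once Gursky's theorem is available, and the paper invokes nothing of that sort.
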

\begin{proof}
	The curvature operator $\mathfrak{R}$ is $2$-quasi-positive, i.e. $(\la_{1}+2\la_{2}+\la_{3})-\frac{2}{n-2}R_{g}\geq 0$ for all $x\in M$ and there exists some $x_{0}\in M$ such that $(\la_{1}+2\la_{2}+\la_{3})-\frac{2}{n-2}R_{g}>0$. By using the method of B\"{o}hm-Wilking \cite{BW08}, Ni-Wu proved in \cite[Corollary 2.3]{NW07}, that along the Ricci flow, the curvature operator $\mathfrak{R}(g(t))$ of $g(t)$ is $2$-positive if $t>0$. In particular, by \cite{BW08} again, $M$ is diffeomorphic to a spherical
	space form.
	
Under our assumptions, the metric $g$ has non-negative scalar curvature. Moreover, when $n=4$ or $6$, by \cite[Theorem A]{Gur94}, the Euler characteristic satisfies $\chi(M)=2$ if and only if $(M,g)$ is conformally equivalent to the standard sphere $S^{n}$ endowed with its canonical metric.
\end{proof}

\section{Eigenvalue estimates and curvature operator positivity}
At any point $p\in M$, the decomposition we constructed for the curvature operator $\mathfrak{R}$ can be viewed as the sum of two Hermitian matrices. Therefore, we first introduce some properties of the eigenvalues of Hermitian matrices, as well as the definition of matrix norms.
\subsection{Eigenvalues of Hermitian matrix}
We first recall some definitions of the norms of Hermitian matrix (cf.\cite{Bha97,HJ12}). For any Hermitian matrix $A$, there are two fundamental norms:
\begin{itemize}
	\item Operator norm: $\|A\|=\sup_{|x|=1}|\langle x,Ax\rangle|$;
	\item Frobenius norm: $\|A\|_{2}=\left(\sum_{i,j=1}^{n}|a_{ij}|^{2}\right)^{\frac{1}{2}}=\left({\rm{tr}}(A^{\ast}A)\right)^{\frac{1}{2}}$.
\end{itemize}
The basic inequality between these two norms is:
$$\|A\|\leq\|A\|_{2}\leq\sqrt{n}\|A\|.$$
For eigenvalues $\la_{1}\leq\cdots\leq\la_{n}$ of $A$, we have the extremal characterizations:
\begin{equation*}
\begin{split}
&\la_{n}=\max\left\{\langle x,Ax\rangle:\|x\|=1 \right\},\\
&\la_{1}=\min\left\{\langle x,Ax\rangle:\|x\|=1 \right\}.\\
\end{split}
\end{equation*}
Moreover, the eigenvalues and the trace of $A$ satisfy the concentration inequality (cf.\cite[Page 24]{BG64})
\begin{equation*}
\left|\la_{i}-\frac{{\rm{tr}} A}{n}\right|\leq \left[\frac{n-1}{n}\left(\|A\|^{2}_{2}-\frac{|{\rm{tr}} A|^{2}}{n} \right) \right]^{\frac{1}{2}},
\end{equation*}
for all $i=1,\cdots,n$. Given any $1\leq k\leq n$, we also have
\begin{align}
\sum_{j=n-k+1}^{n}\la_{j}&=\max\sum_{j=1}^{k}\langle x_{j},Ax_{j}\rangle,\label{uppernk} \\
\sum_{j=1}^{k}\la_{j}&=\min\sum_{j=1}^{k}\langle x_j,Ax_j\rangle,\label{lowerk}
\end{align}
where the maximum and minimum are taken over all choices of orthonormal $k$-tuples $(x_{1},\cdots,x_{k})$. The equality \eqref{uppernk} is known as the Ky Fan's maximum principle. From \eqref{uppernk} and \eqref{lowerk}, we can conclude the following relations between eigenvalues of Hermitian matrices $A,B$ and $A+B$.
\begin{lemma}\label{L1}(cf.\cite[Page 35]{Bha97})
Let $A,B$ be two Hermitian matrices with eigenvalues $\la_{1}(A)\leq\cdots\leq\la_{n}(A)$ and $\la_{1}(B)\leq\cdots\leq\la_{n}(B)$, respectively. Then
\begin{itemize}
	\item[(a)]Subadditivity of lower sum:
$$\sum_{j=1}^{k}\la_{j}(A+B)\geq\sum_{j=1}^{k}(\la_{j}(A)+\la_{j}(B)),$$
\item[(b)]Superadditivity of upper sums:
$$\sum_{j=n-k+1}^{n}\la_{j}(A+B)\leq\sum_{j=n-k+1}^{n}(\la_{j}(A)+\la_{j}(B)).$$
\end{itemize}
\end{lemma}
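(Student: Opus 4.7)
The plan is to derive both inequalities directly from the variational (min–max) characterizations \eqref{uppernk} and \eqref{lowerk} that the paper has already recorded just above the statement. Both parts are essentially Ky Fan's classical trace inequalities, and the proof is a one-line application of the fact that the Rayleigh-like quadratic form $\sum_{j=1}^k\langle x_j,(A+B)x_j\rangle$ splits as a sum over $A$ and $B$.

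For part (b), I would begin by invoking \eqref{uppernk} applied to the Hermitian matrix $A+B$: there exists an orthonormal $k$-tuple $(x_1,\dots,x_k)$ that attains the maximum, giving
\begin{equation*}
\sum_{j=n-k+1}^{n}\la_j(A+B)=\sum_{j=1}^{k}\langle x_j,(A+B)x_j\rangle=\sum_{j=1}^{k}\langle x_j,Ax_j\rangle+\sum_{j=1}^{k}\langle x_j,Bx_j\rangle.
\end{equation*}
Each of these two partial sums is then bounded above by the corresponding top-$k$ eigenvalue sum of $A$ and of $B$ respectively, again by \eqref{uppernk}, which yields exactly the superadditivity claim.

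For part (a), I would proceed symmetrically using \eqref{lowerk}. Choose an orthonormal $k$-tuple $(y_1,\dots,y_k)$ attaining the minimum for $A+B$, so that
\begin{equation*}
\sum_{j=1}^{k}\la_j(A+B)=\sum_{j=1}^{k}\langle y_j,Ay_j\rangle+\sum_{j=1}^{k}\langle y_j,By_j\rangle,
\end{equation*}
and then bound each summand below by the corresponding bottom-$k$ eigenvalue sum using \eqref{lowerk} applied to $A$ and to $B$ respectively. The inequality reverses direction compared with (b) precisely because we are now using the minimum characterization.

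There is really no main obstacle here: once the Ky Fan max principle and its dual min principle are in hand, the lemma is immediate, since the only nontrivial ingredient — that the extremal $k$-tuple for $A+B$ need not be extremal for $A$ or $B$ individually — is exactly what produces the inequality rather than equality. I would simply make sure to explicitly cite \eqref{uppernk} and \eqref{lowerk} at each step so that the argument is self-contained within the paper's notation.
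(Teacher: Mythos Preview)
Your proposal is correct and matches the paper's own indication: the paper does not write out a proof but simply states that the lemma follows from \eqref{uppernk} and \eqref{lowerk} (with a reference to Bhatia), and your argument is precisely the standard derivation from those two variational characterizations. There is nothing to add.
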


\subsection{Weyl curvature and positively of curvature operator}
The curvature operator $\mathfrak{R}$ admits a  decomposition into sectional and Weyl components:
$$\overline{\mathfrak{S}}=\left(\begin{array}{ccc}R_{1212} & \cdots & 0 \\ \vdots & \ddots & \vdots \\0 & \cdots & R_{(n-1)n(n-1)n}\end{array}\right)$$
and
$$\overline{\mathfrak{W}}=\left(\begin{array}{ccc}0 &\cdots  & \mathcal{W}_{ijkl} \\ \vdots &\ddots & \vdots \\ \mathcal{W}_{ijkl}  & \cdots & 0\end{array}\right),$$
where $\overline{\mathfrak{S}}$ contains the sectional curvature components and $\overline{\mathfrak{W}}$ represents the trace-free Weyl curvature.

Let $\mu_1 \leq \cdots \leq \mu_N$ (respectively $\bar{\nu}_1 \leq \cdots \leq \bar{\nu}_N$), where $N = \frac{n(n-1)}{2}$, denote the eigenvalues of $\mathfrak{R}$ (respectively $\overline{\mathfrak{W}}$).

To estimate the sum of the eigenvalues of curvature operator, we first establish the following inequality for real numbers.
\begin{lemma}\label{L4}
	Let $a_{1} \leq \cdots \leq a_{N}$ be a sequence of real numbers with $\sum_{i=1}^{N} a_{i} = 0$. Then for any $1 \leq k \leq N$, 
	$$\sum_{i=1}^{k}a_{i}\geq-\sqrt{\frac{k(N-k)}{N}}(\sum_{i=1}^{N}a^{2}_{i})^{\frac{1}{2}}.$$
	Equality holds if and only if either:
	\begin{itemize}
		\item[(1)] $a_{1} = \cdots = a_{k} = -(N-k)$, and $a_{k+1} = \cdots = a_{N} = k$, or
		\item [(2)]$a_{1} = \cdots = a_{N} = 0$.
	\end{itemize}
\end{lemma}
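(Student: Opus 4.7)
The plan is to reduce the inequality to a two-block Cauchy–Schwarz argument, using only the hypothesis $\sum_{i=1}^{N} a_i = 0$ and the ordering $a_1 \le \cdots \le a_N$. Set $S_k := \sum_{i=1}^{k} a_i$. First I would record the sign information: since $a_1,\dots,a_k$ are the $k$ smallest of the $a_i$, we have $S_k \le \frac{k}{N}\sum_{i=1}^{N} a_i = 0$, and likewise $\sum_{i=k+1}^{N} a_i = -S_k \ge 0$. So in the end it suffices to bound $S_k^2$ from above by $\frac{k(N-k)}{N}\sum_{i=1}^{N} a_i^2$ and then take the negative square root.

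Next I would apply Cauchy–Schwarz to each of the two blocks separately against the all-ones vector of the appropriate length:
\begin{equation*}
S_k^2 = \Bigl(\sum_{i=1}^{k} a_i\Bigr)^2 \le k\sum_{i=1}^{k} a_i^2, \qquad
S_k^2 = \Bigl(\sum_{i=k+1}^{N} a_i\Bigr)^2 \le (N-k)\sum_{i=k+1}^{N} a_i^2.
\end{equation*}
Dividing by $k$ and $N-k$ respectively and adding gives
\begin{equation*}
S_k^2 \Bigl(\tfrac{1}{k}+\tfrac{1}{N-k}\Bigr) = S_k^2\cdot\tfrac{N}{k(N-k)} \le \sum_{i=1}^{N} a_i^2,
\end{equation*}
i.e.\ $S_k^2 \le \tfrac{k(N-k)}{N}\sum a_i^2$. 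Combining this with $S_k \le 0$ yields exactly
\begin{equation*}
\sum_{i=1}^{k} a_i \ge -\sqrt{\tfrac{k(N-k)}{N}}\Bigl(\sum_{i=1}^{N} a_i^2\Bigr)^{1/2}.
\end{equation*}

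For the equality case, I would trace back through the two Cauchy–Schwarz steps: equality in the first forces $a_1=\cdots=a_k$, and equality in the second forces $a_{k+1}=\cdots=a_N$. Writing the common values as $\alpha \le \beta$, the zero-sum condition reads $k\alpha+(N-k)\beta=0$, so either $\alpha=\beta=0$, giving case (2), or $\alpha=-(N-k)t$ and $\beta=kt$ for some $t>0$; the particular normalization $t=1$ is case (1). I do not expect any serious obstacle: the only subtlety is being careful about the sign of $S_k$ when extracting the square root, and stating the equality characterization up to a positive rescaling — both are straightforward once the two-block Cauchy–Schwarz step is set up.
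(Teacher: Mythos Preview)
Your proof is correct and is essentially identical to the paper's: both apply Cauchy--Schwarz separately to the blocks $\{a_1,\dots,a_k\}$ and $\{a_{k+1},\dots,a_N\}$, use the zero-sum condition to rewrite the second partial sum as $-S_k$, add the resulting inequalities to bound $S_k^2$, and then invoke $S_k\le 0$ to extract the negative square root. Your treatment of the equality case is slightly more explicit than the paper's, and you correctly flag that the stated equality configuration (1) is really determined only up to a positive scalar.
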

\begin{proof}
	Applying the Cauchy-Schwarz inequality to the first $k$ and last $N-k$ terms respectively yields:
	\begin{equation*}
	\begin{split}
	&|\sum_{i=1}^{k}a_{i}|\leq \sqrt{k}(\sum_{i=1}^{k}a^{2}_{i})^{\frac{1}{2}},\\
	&|\sum_{i=k+1}^{N}a_{i}|\leq \sqrt{N-k}(\sum_{i=k+1}^{N}a^{2}_{i})^{\frac{1}{2}}.\\
	\end{split}
	\end{equation*}
	From the zero-sum condition $\sum_{i=1}^{N} a_{i} = 0$, we have $\sum_{i=k+1}^{N} a_{i} = -\sum_{i=1}^{k} a_{i}$. Combining this with the above inequalities gives:
	\begin{equation*}
	\frac{1}{k}(\sum_{i=1}^{k}a_{i})^{2}+\frac{1}{N-k}(\sum_{i=1}^{k}a_{i})^{2}\leq\sum_{i=1}^{N}a^{2}_{i}.
	\end{equation*}
	Since $\sum_{i=1}^{k} a_{i} \leq 0$, we obtain the desired inequality: 
	$$\sum_{i=1}^{k}a_{i}\geq-\sqrt{\frac{k(N-k)}{N}}(\sum_{i=1}^{N}a^{2}_{i})^{\frac{1}{2}}.$$
	The equality conditions follow from examining when both Cauchy-Schwarz inequalities become equalities, which requires the claimed proportional relationships between the $a_i$.
\end{proof}
We have the following eigenvalue estimates for curvature operator. The Lemma \ref{L4} yields the
\begin{proposition}\label{P4}
Let $(M,g)$ be a compact manifold of dimension $n$, $n\geq4$. Then  
\begin{itemize}
	\item[(1)]If the $k$th-intermediate Ricci curvature satisfies $Ric_{k}\geq ka$, then 
	$$\sum_{i=1}^{k}\mu_{i}\geq ka-\sqrt{\frac{k(N-k)}{N}}|\mathcal{W}_{g}|,$$
	\item[(2)]If the $k$th-intermediate Ricci curvature satisfies $Ric_{k}\leq ka$,
	$$\sum_{j=N-k+1}^{N}\mu_{j}\leq ka+\sqrt{\frac{k(N-k)}{N}}|\mathcal{W}_{g}|.$$
\end{itemize}
\end{proposition}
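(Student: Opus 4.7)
The plan is to apply the spectral subadditivity of Lemma~\ref{L1} to the decomposition $\mathfrak{R}=\overline{\mathfrak{S}}+\overline{\mathfrak{W}}$ introduced at the beginning of Section~3.2. In the Ricci eigenbasis, $\overline{\mathfrak{S}}$ is the diagonal operator carrying the sectional curvatures $R_{ijij}$, while $\overline{\mathfrak{W}}$ collects the off-diagonal Weyl components $\mathcal{W}_{ijkl}$ and is therefore trace-free. Denote by $\bar{\sigma}_{1}\leq\cdots\leq\bar{\sigma}_{N}$ and $\bar{\nu}_{1}\leq\cdots\leq\bar{\nu}_{N}$ the eigenvalues of $\overline{\mathfrak{S}}$ and $\overline{\mathfrak{W}}$ respectively.

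For part~(1), I would first invoke Lemma~\ref{L1}(a), which gives
$$\sum_{i=1}^{k}\mu_{i}\ \geq\ \sum_{i=1}^{k}\bar{\sigma}_{i}+\sum_{i=1}^{k}\bar{\nu}_{i}.$$
Since $\overline{\mathfrak{W}}$ has vanishing trace, Lemma~\ref{L4} applied to its spectrum yields $\sum_{i=1}^{k}\bar{\nu}_{i}\geq -\sqrt{k(N-k)/N}\,\|\overline{\mathfrak{W}}\|_{2}$, and $\|\overline{\mathfrak{W}}\|_{2}$ is controlled by $|\mathcal{W}_{g}|$ through the algebraic identification of the off-diagonal block with the components $\mathcal{W}_{ijkl}$ of the Weyl tensor.

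The remaining step is the bound $\sum_{i=1}^{k}\bar{\sigma}_{i}\geq ka$. Since $\overline{\mathfrak{S}}$ is diagonal in the Ricci eigenbasis with entries $R_{ijij}=\sec_{g}(e_{i},e_{j})$, this reduces to a lower bound on a sum of $k$ sectional curvatures in the frame; the intermediate Ricci hypothesis $Ric_{k}\geq ka$ delivers such a bound through the identity $Ric_{k}(u)=\sum_{i=1}^{k}\langle u\wedge e_{i},\mathfrak{R}(u\wedge e_{i})\rangle$, which identifies $Ric_{k}(u)$ with the trace of $\mathfrak{R}$ on the $k$-plane spanned by $u\wedge e_{1},\ldots,u\wedge e_{k}$. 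The main technical hurdle I anticipate is verifying that the particular $k$ smallest sectional curvatures selected by the spectral ordering can be arranged in a pencil-type configuration to which $Ric_{k}\geq ka$ directly applies; once this matching between the combinatorial selection of pairs and the frame configurations appearing in the intermediate-Ricci definition is secured, combining with the trace-free Weyl bound above completes~(1).

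Part~(2) is symmetric: apply Lemma~\ref{L1}(b) to the sum of the $k$ largest eigenvalues $\sum_{i=N-k+1}^{N}\mu_{i}$, use Lemma~\ref{L4} to bound $\sum_{i=N-k+1}^{N}\bar{\nu}_{i}$ from above by $\sqrt{k(N-k)/N}\,\|\overline{\mathfrak{W}}\|_{2}$, and use the upper intermediate-Ricci hypothesis $Ric_{k}\leq ka$ to dominate $\sum_{i=N-k+1}^{N}\bar{\sigma}_{i}\leq ka$. Together these yield the claimed upper bound on $\sum_{j=N-k+1}^{N}\mu_{j}$.
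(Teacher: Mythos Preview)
Your outline is exactly the paper's argument: decompose $\mathfrak{R}=\overline{\mathfrak{S}}+\overline{\mathfrak{W}}$, apply Lemma~\ref{L1} to separate the two spectra, and use Lemma~\ref{L4} on the trace-free $\overline{\mathfrak{W}}$ to get $\sum_{i=1}^{k}\bar{\nu}_{i}\geq -\sqrt{k(N-k)/N}\,\|\overline{\mathfrak{W}}\|_{2}$ (and the reversed inequality for part~(2)). The paper's proof is no more detailed than yours on the $\overline{\mathfrak{W}}$ side.

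The step you single out as a ``technical hurdle'' is precisely the one the paper glosses over. After invoking Lemma~\ref{L1}, the paper simply writes ``combining with Lemma~\ref{L1} yields the lower bound'', which tacitly uses $\sum_{i=1}^{k}\bar{\sigma}_{i}\geq ka$. Since $\overline{\mathfrak{S}}$ is diagonal with entries $R_{ijij}$ in the Ricci eigenbasis, this amounts to asserting that \emph{any} sum of $k$ distinct sectional curvatures $R_{i_{l}j_{l}i_{l}j_{l}}$ is at least $ka$. But Definition~\ref{D2} only controls \emph{pencil} sums $\sum_{l=1}^{k}\sec_{g}(u,e_{l})$ through a common vector $u$; for a collection of pairs such as $(1,2)$ and $(3,4)$ when $k=2$, $n=4$, the hypothesis $Ric_{k}\geq ka$ gives no direct information. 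The paper supplies no argument bridging this gap (the same unproved passage appears in the locally-conformally-flat theorem just after Definition~\ref{D2}), so your caution is well placed: you are not missing an idea that the paper provides.
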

\begin{proof}
Using Lemma \ref{L4} with the trace-free property $tr \overline{\mathfrak{W}} = 0$, we obtain:
	$$-\sum_{i=1}^{k}\bar{\nu}_{i}\leq \sqrt{\frac{k(N-k)}{N}}\|\overline{\mathfrak{W}}\|_{2}\leq \sqrt{\frac{k(N-k)}{N}}|\mathcal{W}_{g}|.$$
Combining with Lemma \ref{L1} yields the lower bound
	$$\sum_{i=1}^{k}\mu_{i}\geq ka-\sqrt{\frac{k(N-k)}{N}}|\mathcal{W}_{g}|.$$
For the upper bound, consider the reversed eigenvalue sequence $-\bar{\nu}_N \leq \cdots \leq -\bar{\nu}_1$. By the Lemma \ref{L4}, we obtain
$$\sum_{i=N-k+1}^{N}(-\bar{\nu}_{i})\geq -\sqrt{\frac{k(N-k)}{N}}|\mathcal{W}_{g}|,$$
i.e.,$$\sum_{i=N-k+1}^{N}\bar{\nu}_{i}\leq \sqrt{\frac{k(N-k)}{N}}|\mathcal{W}_{g}|.$$
Following the same approach, we can derive the upper bound estimates for the largest $k$ eigenvalues.
\end{proof}
\begin{corollary}\label{C1}
	Let $(M,g)$ be a compact manifold of dimension $n$, $n\geq4$, with positive (resp. negative) sectional curvature $sec_{g}\geq a>0$ (resp. $sec_{g}\leq -a<0$). If the Weyl curvature $\mathcal{W}_{g}$ satisfies
	$$|\mathcal{W}_{g}|<\sqrt{\frac{kN}{N-k}}a,$$
	where $N=\frac{n(n-1)}{2}$,	then the curvature operator is $k$-positive (resp. $k$-negative). Furthermore, 	
	\begin{itemize}
		\item[(1)]If $1\leq k\leq \lceil\frac{n}{2}\rceil$, then $b_{p}(M)=0$ for all $1\leq p\leq n-1$;
		\item[(2)]If $ \lceil\frac{n}{2}\rceil+1\leq k\leq n-1$, then 
		$b_{p}(M)=0$ for all $1\leq p\leq n-k$.
	\end{itemize}
	\end{corollary}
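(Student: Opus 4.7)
The plan is to combine the intermediate Ricci consequence of pinched sectional curvature with the eigenvalue bound of Proposition \ref{P4}, and then invoke the vanishing theorem of Petersen--Wink already cited in the paper. First I would note that a pointwise bound $\sec_{g}\geq a$ immediately forces $Ric_{k}\geq ka$ for every $1\leq k\leq n-1$, because by Definition \ref{D2} the quantity $Ric_{k}(u;e_{1},\dots,e_{k})$ is literally a sum of $k$ sectional curvatures; dually, $\sec_{g}\leq -a$ yields $Ric_{k}\leq -ka$. So the hypothesis of Corollary \ref{C1} is already framed in the exact form required by Proposition \ref{P4}.

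With that observation in hand, I would apply Proposition \ref{P4}(1) in the positive case to obtain
\begin{equation*}
\sum_{i=1}^{k}\mu_{i}\;\geq\; ka-\sqrt{\frac{k(N-k)}{N}}\,|\mathcal{W}_{g}|.
\end{equation*}
The key arithmetic observation is that the two constants appearing in the hypothesis on $|\mathcal{W}_{g}|$ and in the Proposition \ref{P4} estimate are reciprocal up to a factor of $k$, namely $\sqrt{\frac{k(N-k)}{N}}\cdot\sqrt{\frac{kN}{N-k}}=k$. Consequently the Weyl pinching $|\mathcal{W}_{g}|<\sqrt{\frac{kN}{N-k}}\,a$ forces $\sqrt{\frac{k(N-k)}{N}}\,|\mathcal{W}_{g}|<ka$ pointwise, so $\sum_{i=1}^{k}\mu_{i}>0$ and $\mathfrak{R}$ is $k$-positive. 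In the negative case I would instead feed $Ric_{k}\leq -ka$ into Proposition \ref{P4}(2) to obtain $\sum_{j=N-k+1}^{N}\mu_{j}<0$, which by definition means $\mathfrak{R}$ is $k$-negative (equivalently $-\mathfrak{R}$ is $k$-positive).

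Once $k$-positivity of $\mathfrak{R}$ is established, statements (1) and (2) on the Betti numbers follow verbatim from \cite[Theorem A]{PW21a}, the same Petersen--Wink vanishing theorem invoked earlier in the locally conformally flat setting of this paper, with the usual dichotomy according to whether $k\in[1,\lceil n/2\rceil]$ or $k\in[\lceil n/2\rceil+1,n-1]$. I do not see a genuine obstacle in this argument; the only step that needs verification is the reciprocal identity of the two square-root constants, which is precisely what makes the Weyl hypothesis sharply matched to the eigenvalue estimate of Proposition \ref{P4}.
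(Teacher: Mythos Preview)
Your proposal is correct and follows essentially the same route as the paper: apply Proposition \ref{P4} (using that $\sec_{g}\geq a$ trivially gives $Ric_{k}\geq ka$) to deduce $k$-positivity of $\mathfrak{R}$ from the Weyl pinching, then invoke \cite[Theorem A]{PW21a} for the Betti-number vanishing. The paper's own proof is even terser but identical in substance.
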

\begin{proof}
	Following Proposition \ref{P4}, the curvature condition implies the $k$-positivity of the curvature operator. The topological conclusions follow from \cite[Theorem A]{PW21a}.
\end{proof}

\begin{proof}[\textbf{Proof of Theorem \ref{T2}}]
Under the curvature condition (where $k=1$), Corollary \ref{C1} implies that the curvature operator is positive (resp. negative). Consequently, Proposition \ref{P2} yields 
$${\rm{sign}}\chi(M)=(-1)^{n}.$$
\end{proof}

\subsection{Estimates of Betti numbers }
Building on fundamental work by Li \cite{Li80} and Gallot \cite{Gal81}, we establish bounds for the Betti numbers of compact Riemannian manifolds with constraints on the $k$th-intermediate Ricci curvature, Weyl curvature, and diameter.
\begin{theorem}
Let $(M,g)$ be a compact Riemannian manifold of dimension $n$, $n\geq4$.  Suppose that 
	$$Ric_{k}-\sqrt{\frac{k(N-k)}{N}}|\mathcal{W}_{g}|\geq ka\quad and\quad {\rm{diam}}(M)\leq D,$$	
	where $\lceil\frac{n}{2} \rceil\leq k\leq n-1$. Then there exists a constant $C(n,aD^{2})>0$ such that for all $1\leq p\leq n-k$,
	$$b_{p}(M)\leq {\binom{n}{p}}\exp\bigg{(}C(n,aD^{2})\cdot\sqrt{-aD^{2}p(n-p)}\bigg{)}.$$
	In particular, when $aD^{2}\geq-\varepsilon(n)$ of some $\varepsilon(n)$, we have $b_{p}(M)\leq {\binom{n}{p}}$. 
\end{theorem}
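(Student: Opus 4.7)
The plan is to deduce an eigenvalue sum bound on the curvature operator from Proposition \ref{P4}, feed it into the Bochner-Weitzenb\"{o}ck formula for $p$-forms via the Petersen-Wink framework, and then apply the heat-kernel Betti number estimate of Gallot \cite{Gal81} and Li \cite{Li80} under the diameter bound.

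First I would apply Proposition \ref{P4}(1) to the hypothesis $Ric_{k}-\sqrt{k(N-k)/N}\,|\mathcal{W}_{g}|\geq ka$, which yields the pointwise eigenvalue bound $\sum_{i=1}^{k}\mu_{i}\geq ka$ for the ordered eigenvalues $\mu_{1}\leq\cdots\leq\mu_{N}$ of $\mathfrak{R}$. Because these eigenvalues are nondecreasing, the average $\tfrac{1}{k}\sum_{i=1}^{k}\mu_{i}\geq a$ is at most $\mu_{k}$, so $\mu_{k}\geq a$, and hence $\mu_{i}\geq a$ for every $i\geq k$. For $1\leq p\leq n-k$ one has $n-p\geq k$, and summing gives $\sum_{i=1}^{n-p}\mu_{i}\geq ka+(n-p-k)a=(n-p)a$, so the average of the lowest $n-p$ eigenvalues of $\mathfrak{R}$ is bounded below by $a$ everywhere on $M$.

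Next I would invoke the Bochner-Weitzenb\"{o}ck identity $\Delta=\nabla^{\ast}\nabla+\mathfrak{Ric}$ on $p$-forms. The pointwise inequality established in \cite{PW21a} shows that the Weitzenb\"{o}ck curvature operator on $p$-forms is bounded below by $p(n-p)$ times the average of the $n-p$ lowest eigenvalues of $\mathfrak{R}$. Combining this with the previous step produces the pointwise lower bound $\langle\mathfrak{Ric}(\omega),\omega\rangle\geq p(n-p)\,a\,|\omega|^{2}$; equivalently, the Weitzenb\"{o}ck curvature on $p$-forms is bounded below by $-\kappa$ with $\kappa:=\max\{-p(n-p)a,\,0\}$.

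Finally, with the diameter constraint $\mathrm{diam}(M)\leq D$ in hand, one is in precisely the setting of Gallot's Betti number estimate \cite{Gal81}, which refines the heat-kernel analysis of Li \cite{Li80} for Laplacians on $p$-forms whose Weitzenb\"{o}ck curvature term is bounded from below. That framework yields $b_{p}(M)=\dim\ker\Delta_{p}\leq\binom{n}{p}\exp\bigl(C(n,aD^{2})\sqrt{\kappa D^{2}}\bigr)$, which with $\kappa=-p(n-p)a$ is exactly the target bound. The assertion about $aD^{2}\geq-\varepsilon(n)$ then follows by choosing $\varepsilon(n)$ small enough that the exponential factor does not exceed $1$. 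The main obstacle will be the careful bookkeeping of dimensional constants when transferring the Petersen-Wink pointwise eigenvalue estimate into the Gallot-Li heat-kernel framework for $p$-forms; once this is in place, the rest of the argument is essentially an assembly of the ingredients already developed in the paper.
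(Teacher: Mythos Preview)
Your proposal is correct and follows essentially the same route as the paper. Both arguments use Proposition~\ref{P4} to obtain $\sum_{i=1}^{k}\mu_{i}\geq ka$, then pass to the average of the lowest $n-p$ eigenvalues (for $n-p\geq k$) to feed into the Petersen--Wink lower bound $g(\mathfrak{Ric}(u),u)\geq p(n-p)a|u|^{2}$ on $p$-forms; the only cosmetic difference is that the paper cites \cite[Theorem 1.9]{PW21a} as a packaged statement, whereas you unpack it as the Gallot--Li heat-kernel estimate applied to the Weitzenb\"{o}ck curvature bound.
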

\begin{proof}
	The curvature assumption implies for the curvature operator eigenvalues  $\mu_{1}\leq\cdots \mu_{N}$, $N=\frac{n(n-1)}{2}$, of curvature operator $\mathfrak{R}$ (see Corollary \ref{C5}): 
	$$\frac{1}{n-p}(\mu_{1}+\cdots+\mu_{n-p})\geq\frac{1}{k}(\min Ric_{k}-\sqrt{\frac{k(N-k)}{N}}|\mathcal{W}_{g}|)\geq a.$$
	By \cite[Lemma 2.1]{PW21a},  any $p$-form $u\in\Om^{p}(X)$, $1\leq p\leq n-k$, satisfies:
	$$g(\mathfrak{Ric}(u),u)\geq p(n-p)a|u|^2.$$
	Following \cite[Theorem 1.9]{PW21a}, we obtain that the Betti number $b_{p}(M)$ is bounded by $${\binom{n}{p}}\exp\bigg{(}C(n,aD^{2})\cdot\sqrt{-aD^{2}p(n-p)}\bigg{)}.$$
\end{proof}
Building on the work of Huang and Tan \cite{HT25}, who established that a compact $2n$-manifold with uniformly bounded negative average of the lowest $n$ curvature operator eigenvalues and $b_1(M) \geq 1$ satisfies $\chi(M)=0$, we obtain the following refined result:
\begin{theorem}
	Let $(M,g)$ be a compact manifold of even dimension $2n$, $n\geq2$, with the first Betti number $b_{1}(M)\geq1$. Then there exists a positive constant $\varepsilon(n)$ such that if the Riemannian curvature tensor satisfies
	$$\bigg{(} Ric_{n}-\sqrt{\frac{n(n-3)}{n-1}  }|\mathcal{W}_{g}|\bigg{)}{\rm{diam}}^{2}(M)\geq-\varepsilon(n),$$
	then the Euler characteristic of $M$ obeys $\chi(M)=0$.    
\end{theorem}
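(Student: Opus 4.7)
The plan is to reduce the statement directly to the Huang--Tan theorem recalled in the paragraph preceding it, by showing that the curvature hypothesis is precisely a scale-invariant lower bound on the average of the lowest $n$ eigenvalues of the curvature operator $\mathfrak{R}$.

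First I would apply the estimate from Proposition~\ref{P4}(1) pointwise with $k=n$, using that $\dim M = 2n$ so that $N=\tfrac{2n(2n-1)}{2}=n(2n-1)$. The argument in Proposition~\ref{P4} relies only on Lemma~\ref{L1} (Ky Fan sub/super-additivity for eigenvalue sums of Hermitian matrices) together with Lemma~\ref{L4} (the Cauchy--Schwarz type inequality applied to the trace-free spectrum of $\overline{\mathfrak{W}}$). Nothing in that derivation actually requires $Ric_k \geq ka$ with $a>0$; the same computation gives
\begin{equation*}
\sum_{i=1}^{n}\mu_i(\mathfrak{R})\ \geq\ Ric_n \ -\ \sqrt{\tfrac{n(N-n)}{N}}\,|\mathcal{W}_g|
\end{equation*}
at every point of $M$, where $\sqrt{n(N-n)/N}$ simplifies to the dimensional constant $\sqrt{n(n-3)/(n-1)}$ matching the coefficient of $|\mathcal{W}_g|$ in the theorem.

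Next I would divide by $n$, multiply by $\mathrm{diam}^2(M)$, and insert the curvature hypothesis to get
\begin{equation*}
\frac{\mathrm{diam}^2(M)}{n}\sum_{i=1}^{n}\mu_i(\mathfrak{R})\ \geq\ -\frac{\varepsilon(n)}{n}.
\end{equation*}
Choosing $\varepsilon(n)$ to be at most $n$ times the threshold constant appearing in Huang--Tan~\cite{HT25}, this is exactly the ``uniformly bounded negative average of the lowest $n$ curvature operator eigenvalues'' hypothesis of their theorem. Combined with $b_1(M)\geq 1$, their result delivers $\chi(M)=0$.

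The only real obstacle is bookkeeping: one must verify the algebraic identity $\sqrt{n(N-n)/N}=\sqrt{n(n-3)/(n-1)}$ for $N=n(2n-1)$ (and the accompanying small-$n$ sanity check, since for $n=2$ the coefficient vanishes or is interpreted as zero and the curvature hypothesis collapses to $Ric_n \cdot \mathrm{diam}^2(M)\geq-\varepsilon(n)$, which already suffices in dimension four). One also has to track the exact normalization --- average versus sum, and the role of $\mathrm{diam}^2$ --- used in \cite{HT25}, so that the threshold $\varepsilon(n)$ is chosen compatibly. Both of these are routine, and the entire proof is otherwise an immediate combination of Proposition~\ref{P4} with the cited Huang--Tan theorem.
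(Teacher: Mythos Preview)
Your approach is exactly the paper's: bound $\mu_1+\cdots+\mu_n$ via Proposition~\ref{P4} and then invoke \cite{HT25}. However, the ``routine bookkeeping'' identity you defer is in fact \emph{false}. With $\dim M = 2n$ one has $N = n(2n-1)$, hence $N-n = 2n(n-1)$ and
\[
\sqrt{\frac{n(N-n)}{N}} \;=\; \sqrt{\frac{2n(n-1)}{2n-1}},
\]
which never equals $\sqrt{n(n-3)/(n-1)}$: for $n=2$ the latter is $\sqrt{-2}$ (not zero, as your sanity check asserts), and for $n=3$ it is $0$ while the correct value is $\sqrt{12/5}$. Since $\tfrac{2n(n-1)}{2n-1} > \tfrac{n(n-3)}{n-1}$ for all $n\geq 2$, Proposition~\ref{P4} only proves the theorem with the larger coefficient $\sqrt{2n(n-1)/(2n-1)}$ in front of $|\mathcal{W}_g|$, i.e.\ under a strictly stronger hypothesis than the one printed. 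The paper's own proof asserts the eigenvalue bound with the printed constant and no further justification, so this is evidently a misprint in the statement rather than a flaw in your strategy; your argument is sound and establishes the theorem with the corrected coefficient.
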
 
\begin{proof}
The curvature hypothesis implies the following estimate for the eigenvalues  $\mu_1\leq\cdots\leq\mu_{N}$ of the curvature operator $\mathfrak{R}$:
	$$(\mu_{1}+\cdots+\mu_{n}){\rm{diam}}^{2}(M)\geq\bigg{(}\min Ric_{n}-\sqrt{\frac{n(n-3)}{n-1}  }|\mathcal{W}_{g}|\bigg{)}{\rm{diam}}^{2}(M)\geq-\varepsilon(n).$$
The conclusion follows immediately from \cite[Theorem 1.1]{HT25}, which guarantees the strict inequality $(-1)^n\chi(M) > 0$ under these conditions.
	
\end{proof}

\section{ Integral curvature bounds and topological rigidity }
\subsection{Integral curvature bounds and Betti numbers }
Relaxing the pointwise boundedness of the Weyl curvature to an integral bound does not guarantee the preservation of curvature operator positivity. In the four-dimensional case, the Gauss-Bonnet-Chern formula establishes a fundamental relationship between the $L^{2}$-norm of the Weyl curvature and the manifold's Euler characteristic.
\begin{proposition}
Let $(M,g)$ be a compact Riemannian $4$-manifold with positive Ricci curvature. Suppose that the Weyl curvature obeys
$$\int_{M}|\mathcal{W}_{g}|^{2}<8\pi^{2},$$
then the Euler characteristic of $M$ obeys $\chi(M)=2$.
\end{proposition}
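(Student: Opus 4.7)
The plan is to split the argument into two parts: first handle $b_1$ and $b_3$ via the classical Bochner formula on $1$-forms, then handle $b_2$ via a Weitzenb\"{o}ck identity on self-dual/anti-self-dual $2$-forms combined with the integral hypothesis on $\mathcal{W}_g$. Since $Ric_g > 0$, any harmonic $1$-form vanishes by Bochner, so $b_1(M) = 0$; Poincar\'{e} duality yields $b_3(M) = 0$, giving $\chi(M) = 2 + b_2(M)$. It therefore remains to prove $b_2(M) = 0$.

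For the second Betti number, I would decompose the space of harmonic $2$-forms as $\mathcal{H}^2 = \mathcal{H}^+ \oplus \mathcal{H}^-$ and fix $\omega \in \mathcal{H}^{\pm}$. The $4$-dimensional Weitzenb\"{o}ck identity gives
$$\nabla^*\nabla\omega = 2\mathcal{W}^{\pm}(\omega) - \frac{R_g}{3}\omega,$$
and pairing with $\omega$ followed by integration yields
$$\int_M \left(|\nabla\omega|^2 + \frac{R_g}{3}|\omega|^2\right) dV_g = 2\int_M \langle \mathcal{W}^{\pm}(\omega), \omega\rangle\, dV_g.$$
Since $\mathcal{W}^{\pm}$ is trace-free on the rank-$3$ bundle $\Lambda^2_{\pm}$, the pointwise bound $\langle \mathcal{W}^{\pm}(\omega),\omega\rangle \leq \sqrt{2/3}\,|\mathcal{W}^{\pm}||\omega|^2$ holds, and H\"{o}lder then yields $\int_M |\mathcal{W}^{\pm}||\omega|^2 \leq \|\mathcal{W}^{\pm}\|_{L^2}\|\omega\|_{L^4}^2$.

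To close the loop, I would combine the refined Kato inequality $|\nabla\omega|^2 \geq \tfrac{3}{2}|\nabla|\omega||^2$ (valid for $\omega \in \Lambda^2_{\pm}$) with the sharp $4$-dimensional conformal Sobolev inequality, which is available because $R_g > 0$. Substituting this Sobolev bound on $\|\omega\|_{L^4}^2$ into the previous estimate produces a single inequality whose sign is governed by $\|\mathcal{W}_g\|_{L^2}^2$ against an absolute constant, with the threshold $8\pi^2$ arising because the Yamabe constant of $S^4$ enters through the sharp Sobolev constant. The hypothesis $\int_M|\mathcal{W}_g|^2 < 8\pi^2$ then forces $\omega \equiv 0$, so $b_2^{\pm}(M) = 0$ and $\chi(M) = 2$. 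The main obstacle lies precisely in this final bookkeeping step: one must chain the trace-free bound on $\mathcal{W}^{\pm}$, the refined Kato inequality, and the sharp conformal Sobolev inequality carefully so that $8\pi^2$ emerges as the critical threshold rather than some larger constant; getting this constant exactly right is the technical heart of the argument.
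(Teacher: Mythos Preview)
Your first step---using Bochner on $1$-forms to kill $b_1$ and reduce to $\chi(M)=2+b_2(M)$---matches the paper exactly. The gap is in the second step. The chain you describe (Weitzenb\"{o}ck on $\Lambda^2_{\pm}$, the trace-free bound $\langle\mathcal{W}^{\pm}\omega,\omega\rangle\leq\sqrt{2/3}\,|\mathcal{W}^{\pm}||\omega|^2$, refined Kato, conformal Sobolev) is internally correct, but the conformal Sobolev inequality on $(M,g)$ carries the Yamabe constant $\lambda(g)$ of \emph{that} manifold, not of $S^4$. Running the constants through, a nonzero $\omega\in\mathcal{H}^{\pm}$ forces only $\|\mathcal{W}_g\|_{L^2}^2\geq c\,\lambda(g)^2$ for an explicit dimensional $c$. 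Positive Ricci curvature gives $\lambda(g)>0$ but no universal positive lower bound, and Aubin's inequality $\lambda(g)\leq\lambda(S^4)$ points the wrong way. So the ``bookkeeping'' you flag as delicate in fact cannot produce the universal threshold $8\pi^2$ from these ingredients alone.

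The paper's proof is different and much shorter: after the Bochner step it simply invokes the inequality $\frac{1}{8\pi^2}\int_M|\mathcal{W}_g|^2\geq\chi(M)-2$ from Gursky \cite{Gur94} and Seshadri \cite{Ses06}, whence $2\leq\chi(M)<3$ and so $\chi(M)=2$ by integrality. In those references the constant $8\pi^2$ arises from the four-dimensional Gauss--Bonnet--Chern formula combined with the conformal invariance of $\int_M\sigma_2(S_g)$ and Aubin's bound $\lambda(g)\leq\lambda(S^4)$ applied to a Yamabe minimizer---not from any Bochner estimate on $2$-forms.
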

\begin{proof}
The curvature condition immediately yields $b_{1}(M)=0$. Consequently, the Euler characteristic simplifies to
$$\chi(M)=2+b_{2}(M).$$
Applying the topological constraint from \cite[Theorem 1.1]{Ses06} or \cite[Lemma 2.5]{Gur94}, we obtain 
$$\frac{1}{8\pi^{2}}\int_{M}|\mathcal{W}_{g}|^{2}\geq\chi(M)-2.$$
Combining these results gives $2\leq \chi(M)<3$, which forces $\chi(M)=2$ by integrality.
\end{proof}
For higher dimensions, we employ Sobolev inequalities to establish vanishing theorems under integral curvature conditions. Our proof proceeds as follows:
\begin{proof}[\textbf{Proof of Theorem \ref{T6}}]
The lower bound on Ricci curvature follows directly from our hypotheses:
$$Ric_{g}\geq(n-1)\frac{\min Ric_{k} }{k}\geq (n-1)a$$
For any smooth function $f$ on an compact $n$-manifold with Ricci curvature satisfying $Ric_{g}\geq(n-1)a$, Ilias' Sobolev inequality \cite[Page 546]{HV97} gives:
\begin{equation}\label{E4}
\|f\|^{2}_{L^{\frac{2n}{n-2}}(M)}\leq\frac{1}{[{\rm{Vol}}(g)]^{\frac{2}{n} } }\bigg{(}\frac{4}{n(n-2)a}\|\na f\|^{2}_{L^{2}(M)}+\|f\|^{2}_{L^{2}(M)}\bigg{)}.
\end{equation}
The sum of smallest $k$, $1\leq k\leq n-1$, eigenvalues  of curvature operator $\mathfrak{R}$ has a lower bounded
	$$ka-\sqrt{\frac{k(N-k)}{N}}|\mathcal{W}_{g}|.$$
Consequently, by \cite[Lemma 2.1]{PW21a}, for any $p$-form $u$, $1\leq p\leq n-k$,
	\begin{equation*}
	g(\mathfrak{Ric}(u),u)\geq
	p(n-p)\left(a-\sqrt{\frac{1}{k}-\frac{1}{N} }|\mathcal{W}_{g}|\right).
	\end{equation*}
For any harmonic $p$-form $u$, applying the Weitzenböck formula and integrating over the manifold $M$, we obtain:
	\begin{equation*}
	\begin{split}
	0&=\|\na u\|^{2}_{L^{2}}+\int_{M}g\left(\mathfrak{Ric}(u),u\right)\\
	&\geq \|\na u\|^{2}_{L^{2}}+p(n-p)a\|u\|^{2}_{L^{2}}-p(n-p)\sqrt{\frac{1}{k}-\frac{1}{N} }\int_{M}|\mathcal{W}_{g}|\cdot|u|^{2}\\
	&\geq \|\na u\|^{2}_{L^{2}}+p(n-p)a\|u\|^{2}_{L^{2}}-p(n-p)\sqrt{\frac{1}{k}-\frac{1}{N} }\|\mathcal{W}_{g}\|_{L^{\frac{n}{2}}}\cdot\|u\|^{2}_{L^{\frac{2n}{n-2}}}\\
	&\geq \|\na u\|^{2}_{L^{2}}\left(1-\sqrt{\frac{N-k}{Nk} }\frac{4p(n-p)\|\mathcal{W}_{g}\|_{L^{\frac{n}{2}}}}{n(n-2)a[{\rm{Vol}}(g)]^{\frac{2}{n}}} \right)+p(n-p)\|u\|^{2}_{L^{2}}\left(a-\sqrt{\frac{N-k}{Nk} }\frac{\|\mathcal{W}_{g}\|_{L^{\frac{n}{2}}}}{[{\rm{Vol}}(g)]^{\frac{2}{n}}}\right).\\
	\end{split}
	\end{equation*}
	On the third line, we use the Holder inequality
	$$\int_{M}|f|\cdot |g|^{2}\leq \|f\|_{L^{\frac{n}{2} }}\|g\|^{2}_{L^{\frac{2n}{n-2} }}.$$
Now, assume the curvature bound 
	$$\bigg{(}\frac{\int_{M}|\mathcal{W}_{g}|^{\frac{n}{2}}}{{\rm{Vol}}(g)}\bigg{)}^{\frac{2}{n}}<\sqrt{\frac{Nk}{N-k}}\frac{n-2}{n}a.$$ 
This implies
 $$1-\sqrt{\frac{1}{k}-\frac{1}{N} }\frac{4p(n-p)\|\mathcal{W}_{g}\|_{L^{\frac{n}{2}}}}{n(n-2)a[{\rm{Vol}}(g)]^{\frac{2}{n}}}>1-\frac{4p(n-p)}{n(n-2)}\frac{n-2}{n}\geq0,$$
 for all $1\leq p\leq n-1$.
Therefore $u=0$, i.e, $b_{p}(M)=0$. This completes the proof of the vanishing theorem under integral curvature bounds.
\end{proof}

\subsection{Yamabe constant  and curvature operator }
Let $(M,g)$ be a compact, smooth Riemannian manifold of dimension $n\geq 3$. Consider the conformal metric $\tilde{g}=u^{\frac{4}{n-2}}g$ on $M$, where $u$ is a smooth positive function. If $R_{g}$ and $R_{\tilde{g}}$ denote the scalar curvatures of $(M,g)$ and $(M,\tilde{g})$, respectively, then we have
\begin{equation}
4\frac{n-1}{n-2}\De_{g}u+R_{g}u=R_{\tilde{g}}u^{\frac{n+2}{n-2}}.
\end{equation}
Thus $\tilde{g}$ has constant scalar curvature $c$ if and only if $u$ satisfies the well-known Yamabe equation
\begin{equation}\label{E2}
4\frac{n-1}{n-2}\De_{g}u+R_{g}u=cu^{\frac{n+2}{n-2}}.
\end{equation}
For every positive function $u\in C^{\infty}$,
\begin{equation*}
Q_{g}(u):=\frac{\int_{M}R_{\tilde{ g}}d{\rm{Vol}}_{ \tilde{g}}      }   { (\int_{M}d{\rm{Vol}}_{ \tilde{g}})^{\frac{n-2}{n} }      }= \frac{\int_{M}(4\frac{n-1}{n-2}|\na u|^{2}+R_{g}u^{2})d{\rm{Vol}}_{g}      }   { (\int_{M} |u|^{\frac{2n}{n-2}}d{\rm{Vol}}_{g})^{\frac{n-2}{n} } }
\end{equation*}
is called the Yamabe functional. A positive function $u$ is a critical point of $Q_{g}$ if and only if $u$ satisfies (\ref{E2}) with $$c=\frac{Q_{g}(u)} { (\int_{M} |u|^{\frac{2n}{n-2}}d{\rm{Vol}}_{g})^{\frac{2}{n} } }.$$ The so-called Yamabe constant
$$\la(g):=\inf\{Q_{g}:u\in C^{\infty}, u>0 \}$$ 
is a conformal invariant. 

We can observe that when the Ricci curvature has a positive lower bound, the Yamabe constant also admits a positive lower bound. The proof is based on Ilias's construction of the Sobolev inequality under the condition of a positive lower bound for the Ricci curvature (cf.\cite{Ili83}).
\begin{proposition}\label{P6}
	Let $(M,g)$ be a closed, connected Riemannian manifold of dimension $n\geq 3$. If the Ricci curvature obeys $Ric_{g}\geq(n-1)a$, then 	Yamabe constant has a lower bounded, that is 
	$$\la(g)\geq n(n-1)a[{\rm{Vol}}(g)]^{\frac{2}{n}}.$$
\end{proposition}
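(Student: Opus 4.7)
\textbf{Proof plan for Proposition \ref{P6}.}

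The plan is to bound the Yamabe functional $Q_g(u)$ pointwise (i.e.\ uniformly in $u$) by the desired constant, by pairing the Ricci lower bound with Ilias's Sobolev inequality that already appears as \eqref{E4} in the proof of Theorem \ref{T6}. First I would trace the hypothesis $\mathrm{Ric}_g \geq (n-1)a$ to obtain the scalar curvature bound
\[
R_g \geq n(n-1)a.
\]
Substituting this into the numerator of the Yamabe functional yields
\[
\int_M\!\Big(\tfrac{4(n-1)}{n-2}|\nabla u|^2 + R_g u^2\Big)\,dV_g \;\geq\; \tfrac{4(n-1)}{n-2}\|\nabla u\|_{L^2}^2 + n(n-1)a\,\|u\|_{L^2}^2
\]
for every positive $u\in C^\infty(M)$.

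Next I would apply Ilias's Sobolev inequality \eqref{E4}, which holds precisely under the hypothesis $\mathrm{Ric}_g \geq (n-1)a$. Multiplying both sides of \eqref{E4} by $n(n-1)a\,[\mathrm{Vol}(g)]^{2/n}$ gives
\[
n(n-1)a\,[\mathrm{Vol}(g)]^{2/n}\,\|u\|_{L^{\frac{2n}{n-2}}}^{2}\;\leq\;\tfrac{4(n-1)}{n-2}\|\nabla u\|_{L^2}^{2} + n(n-1)a\,\|u\|_{L^2}^{2}.
\]
The two displayed inequalities combine to give $Q_g(u)\geq n(n-1)a\,[\mathrm{Vol}(g)]^{2/n}$ for every positive $u\in C^\infty$, and taking the infimum over such $u$ yields the claimed lower bound on $\lambda(g)$.

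The computation is essentially an exact matching of coefficients: the factor $4/(n(n-2)a)$ appearing in Ilias's inequality cancels perfectly with $n(n-1)a$ to reproduce the kinetic coefficient $4(n-1)/(n-2)$ in the Yamabe functional, and the zeroth-order coefficient $n(n-1)a$ on the right of the multiplied Sobolev inequality is exactly the scalar curvature lower bound. Because of this algebraic coincidence there is no real obstacle; the only nontrivial input is having Ilias's sharp Sobolev inequality available, which is why the positive Ricci lower bound $(n-1)a$ (rather than an arbitrary positive constant) is essential. One could optionally remark that equality characterises the standard sphere, recovering the upper bound $\lambda(g)\leq [\mathrm{Vol}(S^n)]^{2/n}$ used in Remark after Theorem \ref{T6}.
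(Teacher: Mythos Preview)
Your proposal is correct and follows essentially the same route as the paper: trace the Ricci lower bound to $R_g\geq n(n-1)a$, plug Ilias's Sobolev inequality \eqref{E4} into the denominator of $Q_g(u)$, and observe that the coefficients match exactly to give $Q_g(u)\geq n(n-1)a[\mathrm{Vol}(g)]^{2/n}$ for every positive $u$. The paper's proof is precisely this computation, written as a single chain of inequalities.
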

\begin{proof}
The Ricci lower bound implies the scalar curvature $R_{g}\geq n(n-1)a$. Combining this with Ilias' Sobolev inequality (\ref{E4}) yields:
	\begin{equation*}
	\begin{split}
	Q_{g}(u)&\geq \frac{\int_{M}(4\frac{n-1}{n-2}|\na u|^{2}+n(n-1)au^{2})d{\rm{Vol}}_{g}      }   { (\int_{M} |u|^{\frac{2n}{n-2}}d{\rm{Vol}}_{g})^{\frac{n-2}{n} } }\\
	&\geq \frac{\int_{M}(4\frac{n-1}{n-2}|\na u|^{2}+n(n-1)au^{2})d{\rm{Vol}}_{g}      }   { \frac{1}{[Vol(g)]^{\frac{2}{n} } }\bigg{(}\frac{4}{n(n-2)a}\|\na f\|^{2}_{L^{2}(X)}+\|f\|^{2}_{L^{2}(X)}\bigg{)}}\\
	&\geq n(n-1)a[{\rm{Vol}}(g)]^{\frac{2}{n}}. \\
	\end{split}
	\end{equation*}
	Therefore $\la(g):=\inf\{ Q_{g}u:u\in C^{\infty},u>0\}\geq n(n-1)a[{\rm{Vol}}(g)]^{\frac{2}{n}}$.
\end{proof}

On any Riemannian manifold $(M,g)$, the curvature tensor admits the orthogonal decomposition:
$$Rm=\frac{R_{g}}{2n(n-1)}g\odot g+\frac{1}{n-2}\overset{\circ}{Ric}_{g}\odot g+\mathcal{W}_{g}$$
where $\overset{\circ}{Ric}_{g}=Ric_{g}-\frac{R_{g}}{n}\cdot g$ is the trace-free Ricci tensor and $\mathcal{W}_{g}$ denotes the Weyl
part. 	We also call $Z_{g}=\frac{1}{n-2}\overset{\circ}{Ric}_{g}\odot g+\mathcal{W}_{g}$ the concircular curvature tensor (cf.\cite{HV97}), and
$$|Z_{g}|^{2}=\frac{4}{n-2}|\overset{\circ}{Ric}_{g}|^{2}+|\mathcal{W}_{g}|^{2}.$$
\begin{lemma}
Denote by $\overset{\circ}{\mathfrak{Ric}}$ the curvature operator induced by $\frac{1}{n-2}\overset{\circ}{Ric}_g \odot g$. Then
$$\|\overset{\circ}{\mathfrak{Ric}}\|^{2}_{2}=\frac{1}{n-2}|\overset{\circ}{Ric}_{g}|^{2}.$$
\end{lemma}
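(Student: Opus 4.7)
The plan is to diagonalize the operator $\overset{\circ}{\mathfrak{Ric}}$ in an orthonormal eigenbasis of the trace-free Ricci tensor, and then reduce the Frobenius norm to an elementary symmetric-sum computation that uses the trace-free condition $\mathrm{tr}\,\overset{\circ}{Ric}_g = 0$.

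First, I would apply Lemma \ref{P5} with $\overset{\circ}{Ric}_g$ in place of $Ric_g$, noting that the Kulkarni-Nomizu identity \eqref{Ricg1} depends only on the fact that $\{e_i\}$ diagonalizes the symmetric $(0,2)$-tensor being paired with $g$. Writing $\mathring{\lambda}_1,\dots,\mathring{\lambda}_n$ for the eigenvalues of $\overset{\circ}{Ric}_g$, the same computation as in Lemma \ref{P5} (with the scalar term removed, since we are pairing $\overset{\circ}{Ric}_g$ with $g$ directly, not $S_g$) yields the diagonalization
\begin{equation*}
\overset{\circ}{\mathfrak{Ric}}(e_i\wedge e_j) = \frac{\mathring{\lambda}_i + \mathring{\lambda}_j}{n-2}\, e_i\wedge e_j, \qquad 1\leq i<j\leq n.
\end{equation*}
Hence $\{e_i\wedge e_j\}_{i<j}$ is an orthonormal eigenbasis for $\overset{\circ}{\mathfrak{Ric}}$ with eigenvalues $(\mathring{\lambda}_i+\mathring{\lambda}_j)/(n-2)$.

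Next, since $\overset{\circ}{\mathfrak{Ric}}$ is symmetric, the Frobenius norm is simply the $\ell^2$-norm of its eigenvalue list:
\begin{equation*}
\|\overset{\circ}{\mathfrak{Ric}}\|_2^2 = \sum_{1\leq i<j\leq n}\frac{(\mathring{\lambda}_i+\mathring{\lambda}_j)^2}{(n-2)^2}.
\end{equation*}
Expanding the square and separating the diagonal and cross terms gives $\sum_{i<j}(\mathring{\lambda}_i^2+\mathring{\lambda}_j^2) = (n-1)\sum_i \mathring{\lambda}_i^2$, while the cross terms satisfy $2\sum_{i<j}\mathring{\lambda}_i\mathring{\lambda}_j = (\sum_i \mathring{\lambda}_i)^2 - \sum_i \mathring{\lambda}_i^2$. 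This is where the trace-free hypothesis enters: since $\sum_i \mathring{\lambda}_i = \mathrm{tr}\,\overset{\circ}{Ric}_g = 0$, the cross terms contribute $-\sum_i \mathring{\lambda}_i^2$, so the total sum reduces to $(n-2)\sum_i \mathring{\lambda}_i^2$.

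Finally, using $\sum_i \mathring{\lambda}_i^2 = |\overset{\circ}{Ric}_g|^2$, I would conclude
\begin{equation*}
\|\overset{\circ}{\mathfrak{Ric}}\|_2^2 = \frac{(n-2)|\overset{\circ}{Ric}_g|^2}{(n-2)^2} = \frac{1}{n-2}|\overset{\circ}{Ric}_g|^2,
\end{equation*}
which is the desired identity. There is no real obstacle here; the only subtlety is ensuring that the reuse of Lemma \ref{P5}'s computation is legitimate for the trace-free Ricci tensor (it is, since the derivation only requires a symmetric $(0,2)$-tensor diagonalized by $\{e_i\}$), and that the cancellation of the cross terms is driven specifically by the trace-free property, which is what produces the clean factor $\tfrac{1}{n-2}$.
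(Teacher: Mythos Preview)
Your proposal is correct and follows essentially the same approach as the paper: diagonalize $\overset{\circ}{\mathfrak{Ric}}$ via the Kulkarni--Nomizu computation from Lemma~\ref{P5}, then reduce the Frobenius norm to the symmetric-sum identity $\sum_{i<j}(\mathring{\lambda}_i+\mathring{\lambda}_j)^2=(n-2)\sum_i\mathring{\lambda}_i^2$ using the trace-free condition. The only cosmetic difference is that the paper keeps the Ricci eigenvalues $\lambda_i$ and writes $\mathring{\lambda}_i=\lambda_i-\frac{R_g}{n}$ explicitly, whereas you work directly with the trace-free eigenvalues.
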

\begin{proof}
Let $\{e_{i}\}_{1,\cdots,n}$ be an orthonormal eigenbasis for the Ricci tensor $Ric_g$ with corresponding eigenvalues $\{\la_{i}\}_{1,\cdots,n}$. By Lemma \ref{P5}, the operator $\overset{\circ}{\mathfrak{Ric}}$ acts on bivectors as
$$\overset{\circ}{\mathfrak{Ric}}(e_{i}\wedge e_{j})=\frac{1}{n-2}(\la_{i}+\la_{j}-\frac{2R_{g}}{n})(e_{i}\wedge e_{j}).$$
Thus, the eigenvalues of $\overset{\circ}{\mathfrak{Ric}}$ are $\frac{1}{n-2}(\la_{i}+\la_{j}-\frac{2R_{g}}{n})$ for $1 \leq i < j \leq n$.	
	
We now compute the Frobenius norm
\begin{equation*}
\begin{split}
\|\overset{\circ}{\mathfrak{Ric}}\|_{2}^{2}&=\frac{1}{(n-2)^{2} }\sum_{i<j}\left(\la_{i}+\la_{j}-\frac{2R_{g}}{n}\right)^{2}\\
&=\frac{1}{(n-2)^{2}}\sum_{i<j}\left[\left(\la_{i}-\frac{R_{g}}{n}\right)^{2}+\left(\la_{i}-\frac{R_{g}}{n}\right)^{2}\right]+\frac{2}{(n-2)^{2} }\sum_{i<j}\left(\la_{i}-\frac{R_{g}}{n}\right)\left(\la_{j}-\frac{R_{g}}{n}\right)\\
&=\frac{1}{(n-2)^{2}}\cdot(n-1)\sum_{i=1}^n\left(\la_{i}-\frac{R_{g}}{n} \right)^{2}+\frac{1}{(n-2)^{2}}\left[\left(\sum_{i=1}^{n}(\la_{i}-\frac{R_{g}}{n})\right)^2-\sum_{i=1}^{n}\left(\la_{i}-\frac{R_{g}}{n}\right)^2\right]\\
&=\frac{1}{n-2}\sum_{i=1}^{n}\left(\la_{i}-\frac{R_{g}}{n} \right)^{2}\\
&=\frac{1}{n-2}|\overset{\circ}{Ric}_{g}|^{2}.
\end{split}
\end{equation*}
The fourth equality follows from the trace-free property of  $\overset{\circ}{\mathfrak{Ric}}$:
$$tr\overset{\circ}{\mathfrak{Ric}}=\frac{1}{n-2}\sum_{i<j}(\la_{i}+\la_{j}-\frac{2R_{g}}{n})=\frac{n-1}{n-2}\sum_{i=1}^{n}(\la_{i}-\frac{R_{g}}{n})=0.$$	
\end{proof}

\begin{proposition}\label{P8}
		Let $(M,g)$ be a compact Riemannian  manifold of dimension $n$, $n\geq4$. The sum of the smallest $k$ eigenvalues of the curvature operator satisfies:
			$$\sum_{i=1}^{k}\mu_{i}\geq \frac{kR_{g}}{n(n-1)}-\sqrt{\frac{k(N-k)}{N}}(\sqrt{\frac{ 1}{n-2}}|\overset{\circ}{Ric}_{g}|+|\mathcal{W}_{g}|),$$
			where $N=\frac{n(n-1)}{2}$ is the dimension of $\Lambda^2TM$.
		\end{proposition}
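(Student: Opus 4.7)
The plan is to refine the two-term decomposition $\mathfrak{R} = \mathfrak{S} + \mathfrak{W}$ from Theorem \ref{T3} into a three-term decomposition corresponding to the finer orthogonal splitting
$$Rm = \frac{R_g}{2n(n-1)} g\odot g + \frac{1}{n-2}\overset{\circ}{Ric}_g\odot g + \mathcal{W}_g,$$
which was just recalled above the proposition. Writing $\mathfrak{R} = \mathfrak{R}_0 + \overset{\circ}{\mathfrak{Ric}} + \mathfrak{W}$, where $\mathfrak{R}_0$ is the operator induced by $\frac{R_g}{2n(n-1)} g\odot g$, I would first observe, by essentially the same computation as in Lemma \ref{P5} applied with $S_g$ replaced by $\frac{R_g}{2n(n-1)} g$, that $\mathfrak{R}_0$ is the scalar operator $\frac{R_g}{n(n-1)}\mathrm{Id}$ on $\Lambda^2 TM$. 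Its eigenvalues are therefore all equal to $\frac{R_g}{n(n-1)}$, contributing the scalar term $\frac{kR_g}{n(n-1)}$ in the final bound.

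Next I would apply Lemma \ref{L1}(a) twice to the sum $\mathfrak{R} = \mathfrak{R}_0 + (\overset{\circ}{\mathfrak{Ric}} + \mathfrak{W})$ (and then split the second factor), obtaining
$$\sum_{i=1}^{k}\mu_i \;\geq\; \frac{kR_g}{n(n-1)} + \sum_{i=1}^{k}\bar\alpha_i + \sum_{i=1}^{k}\bar\nu_i,$$
where $\bar\alpha_1\leq\cdots\leq\bar\alpha_N$ and $\bar\nu_1\leq\cdots\leq\bar\nu_N$ are the eigenvalues of $\overset{\circ}{\mathfrak{Ric}}$ and $\mathfrak{W}$ respectively. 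The crucial point now is that both of these operators are trace-free Hermitian: the vanishing of $\mathrm{tr}\,\overset{\circ}{\mathfrak{Ric}}$ is exactly the content of the lemma immediately preceding the proposition, and $\mathrm{tr}\,\mathfrak{W}=0$ is standard from the trace-freeness of the Weyl tensor.

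Because each is trace-free, Lemma \ref{L4} applies directly and gives
$$\sum_{i=1}^{k}\bar\alpha_i \;\geq\; -\sqrt{\tfrac{k(N-k)}{N}}\,\|\overset{\circ}{\mathfrak{Ric}}\|_2, \qquad \sum_{i=1}^{k}\bar\nu_i \;\geq\; -\sqrt{\tfrac{k(N-k)}{N}}\,\|\mathfrak{W}\|_2.$$
Here I would plug in the identity $\|\overset{\circ}{\mathfrak{Ric}}\|_2^2 = \tfrac{1}{n-2}|\overset{\circ}{Ric}_g|^2$ from the preceding lemma, and the bound $\|\mathfrak{W}\|_2 \leq |\mathcal{W}_g|$ used already in the proof of Proposition \ref{P4}. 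Combining the three estimates yields exactly the stated inequality.

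There is no real obstacle beyond bookkeeping. The main conceptual step is recognising that the scalar-curvature piece of $Rm$ already produces a scalar operator on $\Lambda^2 TM$, so that only the two trace-free pieces need to be controlled by Lemma \ref{L4}; this is what makes the coefficient $\tfrac{kR_g}{n(n-1)}$ emerge cleanly rather than being entangled with the Weyl contribution, and it is what yields a sharper bound than simply applying Proposition \ref{P4} with $a$ replaced by $\tfrac{R_g}{n(n-1)}$.
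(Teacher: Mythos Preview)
Your proposal is correct and follows essentially the same route as the paper: decompose $\mathfrak{R}$ into the scalar piece $\frac{R_g}{n(n-1)}\mathrm{Id}$ plus the two trace-free operators $\overset{\circ}{\mathfrak{Ric}}$ and $\mathfrak{W}$, then apply Lemma~\ref{L1}(a) and Lemma~\ref{L4} to the trace-free parts, invoking the preceding lemma for $\|\overset{\circ}{\mathfrak{Ric}}\|_2$. The only cosmetic difference is that the paper absorbs the scalar shift directly (writing $\mu_i - \tfrac{R_g}{n(n-1)}$ as the eigenvalues of $\overset{\circ}{\mathfrak{Ric}}+\mathfrak{W}$) and applies Lemma~\ref{L1} once, whereas you apply it twice; the content is identical.
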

	\begin{proof}
Let $\{e_{i}\}_{1,\cdots,n}$ be an orthonormal eigenbasis for the Ricci curvature tensor. Then the curvature operator $$\mathfrak{R} = \mathfrak{S}+\mathfrak{W},$$
where the Schouten tenor party further decomposes as:
$$\mathfrak{S}=\frac{R_{g}}{n(n-1)}Id+\overset{\circ}{\mathfrak{Ric}}.$$
The eigenvalue estimates follow from:
$$\sum_{i=1}^{k}(\mu_{i}-\frac{R_{g}}{n(n-1)})\geq\sum_{i=1}^{k}\overset{\circ}{\la}_{i}+\sum_{i=1}^{k}\nu_{i},$$
where $\overset{\circ}{\la}_{1}\leq\cdots\leq\overset{\circ}{\la}_{N}$ denote the eigenvalues of $\overset{\circ}{\mathfrak{Ric}}$.

Applying Lemma \ref{L4} and using the trace-free conditions $tr(\overset{\circ}{\mathfrak{Ric}}) = tr(\mathfrak{W}) = 0$, we obtain the desired eigenvalue estimate:
\begin{equation*}
\begin{split}
\sum_{i=1}^{k}\mu_{i}&\geq\frac{kR_{g}}{n(n-1)}-\sqrt{\frac{k(N-k)}{N}}(\|\overset{\circ}{\mathfrak{Ric}}\|_{2}+\|\mathfrak{W}\|_{2})\\
&\geq \frac{kR_{g}}{n(n-1)}-\sqrt{\frac{k(N-k)}{N}}(\sqrt{\frac{ 1}{n-2}}|\overset{\circ}{Ric}_{g}|+|\mathcal{W}_{g}|).
\end{split}
\end{equation*}
\end{proof}	
\begin{lemma}\label{L5}
For any integer $n \geq 8$ and integer $p$ satisfying $1 \leq p \leq \frac{n}{2}-2$, the following inequality holds:
$$1+\frac{1}{n-p}>\frac{4p(n-p)}{n(n-2)}.$$
\end{lemma}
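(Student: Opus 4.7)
The plan is to make the change of variable $t = \tfrac{n}{2} - p$, which turns the hypothesis $p \leq \tfrac{n}{2} - 2$ into the much cleaner lower bound $t \geq 2$. After this substitution we have $n - p = \tfrac{n}{2} + t$ and $p(n-p) = \tfrac{n^2 - 4t^2}{4}$, so the desired inequality
$$1 + \frac{1}{n-p} > \frac{4p(n-p)}{n(n-2)}$$
becomes
$$1 + \frac{2}{n+2t} > \frac{n^2 - 4t^2}{n(n-2)} = \frac{n}{n-2} - \frac{4t^2}{n(n-2)}.$$

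Next I would subtract $1$ from both sides, use $\frac{n}{n-2} - 1 = \frac{2}{n-2}$, and rearrange to obtain
$$\frac{4t^2}{n(n-2)} > \frac{2}{n-2} - \frac{2}{n+2t} = \frac{4(t+1)}{(n-2)(n+2t)}.$$
Multiplying through by the positive quantity $n(n-2)(n+2t)/4$ (which is positive since $n \geq 8$ and $t \geq 2$), this reduces to $t^2(n+2t) > n(t+1)$, equivalently
$$n(t^2 - t - 1) + 2t^3 > 0.$$

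The condition $t \geq 2$ then makes this transparent: $t^2 - t - 1 \geq 4 - 2 - 1 = 1 > 0$ and $2t^3 \geq 16 > 0$, so the left-hand side is bounded below by $n + 16$. The role of the hypothesis $n \geq 8$ is merely to guarantee that the range $1 \leq p \leq \tfrac{n}{2} - 2$ is non-empty; the inequality itself requires only $t \geq 2$ and $n \geq 3$ (to keep denominators positive). There is no essential obstacle here: the only real step is to notice that the substitution $t = \tfrac{n}{2} - p$ is the natural one for exploiting the hypothesis, after which everything collapses to the elementary observation that $t^2 - t - 1$ is positive for $t \geq 2$.
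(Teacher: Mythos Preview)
Your proof is correct and uses the same substitution $t=\tfrac{n}{2}-p$ (the paper calls it $k$) leading to the same polynomial inequality $2t^{3}+n(t^{2}-t-1)>0$; your version is in fact slightly cleaner, since the paper first splits off a separate Case~1 (where $\tfrac{4p(n-p)}{n(n-2)}\le 1$) before making this substitution, whereas you observe that the substitution handles the full range $t\ge 2$ uniformly.
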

\begin{proof}
We proceed by considering two cases based on the value of $p$:\\
\textbf{Case 1:} $1 \leq p \leq \frac{n}{2} - \sqrt{\frac{n}{2}}$

In this range, we have $\frac{4p(n - p)}{n(n - 2)} \leq 1$ (this follows from straightforward algebraic manipulation). Since $\frac{1}{n - p} > 0$, the inequality holds strictly.\\
\textbf{Case 2:} $\frac{n}{2} - \sqrt{\frac{n}{2}} < p \leq \frac{n}{2} - 2$

Let $p = \frac{n}{2} - k$, where $k$ satisfies $2 \leq k < \sqrt{\frac{n}{2}}$. Substituting this into the original inequality, we obtain:
\begin{equation*}
\begin{split}
1+\frac{1}{n-p}-\frac{4p(n-p) }{n(n-2)}&=1+\frac{2}{n+2k}-\frac{n^{2}-4k^{2} }{n(n-2)}\\
&=\frac{2(4k^{3}+2k^{2}n-2kn-2n)}{(n+2k)n(n-2)}.
\end{split}
\end{equation*}
Define $F(k) = 4k^3 + 2k^2n - 2kn - 2n$. The derivative with respect to $k$ is:
$$F'(k)=12k^{2}+4nk-2n>0,\ {\rm{for}\ all}\ k\geq1.$$
Thus, $F(k)$ is strictly increasing for $k \geq 1$. Evaluating at the endpoints:
$$F(1) = 4 - 4n < 0\  and\ F(2) = 32 + 2n> 0.$$
For all $k \geq 2$, the positivity of $F(k)$ follows from the monotonicity. Therefore, the numerator is positive in our range of interest, which completes the proof.	
\end{proof}
For compact Riemannian manifold $(M,g)$ with positive Yamabe constant $\la(g)$, we have the following conformal  Sobolev inequality
\begin{equation}\label{confSobo}
\la(g)\|u\|^{2}_{L^{\frac{2n}{n-2} } }\leq\frac{4(n-1)}{n-2}\|\na u\|^{2}_{L^{2}}+\int_{M}R_{g}|u|^{2}.
\end{equation}
This inequality will play an crucial role in the following proof of Theorems \ref{T11} and \ref{T1}.
\begin{proof}[\textbf{Proof of Theorem \ref{T11}}]
By \cite[Lemma 2.1]{PW21a}, for any $p$-form $u$ with $1\leq p\leq n-k$, the following  estimate holds
	\begin{equation*}
	\begin{split}
	g(\mathfrak{Ric}(u),u)&\geq p(n-p)(\frac{1}{k}\sum_{i=1}^{k}\mu_{i})|u|^{2}\\
	&\geq p(n-p)\bigg{(}\frac{R_{g}}{n(n-1)}-\sqrt{\frac{N-k}{Nk}}(\sqrt{\frac{ 1}{n-2}}|\overset{\circ}{Ric}_{g}|+|\mathcal{W}_{g}|)\bigg{)}|u|^{2}\\
	&= p(n-p)\bigg{(}\frac{R_{g}}{n(n-1)}-C(N,k)\mathcal{Z}_{g}\bigg{)}|u|^{2},\\
	\end{split}
	\end{equation*}
	where we set
	$$C(N,k)= \sqrt{\frac{N-k}{Nk}},\quad and\quad 
	\mathcal{Z}_g= \frac{|\mathring{Ric}_g|}{\sqrt{n-2}} + |\mathcal{W}_g|.$$
Using the H\"older inequality and the conformal  Sobolev inequality \eqref{confSobo} for $|u|$, we can get
	\begin{align*}
\int_{M}g(\mathfrak{Ric}(u),u)
	\geq& p(n-p)\int_{M}\left(\frac{R_{g}}{n(n-1)}-C(N,k)\mathcal{Z}_{g}\right)|u|^{2}\\
	\geq& p(n-p)\left[\left(\int_{M}\frac{R_{g}|u|^{2}}{n(n-1)}\right)-C(N,k)\|\mathcal{Z}_{g}\|_{L^{ \frac{n}{2} }}\|u\|^{2}_{L^{\frac{2n}{n-2} }} \right]\\
	\geq &p(n-p)\left[\left(\int_{M}\frac{R_{g}|u|^{2}}{n(n-1)}\right)-\frac{C(N,k)}{\la(g)}\|\mathcal{Z}_{g}\|_{L^{ \frac{n}{2} }}  \left(\frac{4(n-1)}{n-2}\|\na| u|\|^{2}_{L^{2}}+\int_{M}R_{g}|u|^{2}\right)\right]\\
	\geq&-p(n-p)\frac{C(N,k)}{\la(g)}\|\mathcal{Z}_{g}\|_{L^{ \frac{n}{2} }}\frac{4(n-1)}{n-2} \|\na |u|\|^{2}_{L^{2}}\\
	&+\int_{M}R_{g}|u|^{2}\left(\frac{p(n-p)}{n(n-1)}-p(n-p)\frac{C(N,k)}{\la(g)}\|\mathcal{Z}_{g}\|_{L^{ \frac{n}{2} }}\right).
\end{align*}
	
Now, let $u$ be a harmonic harmonic $p$-form, where $1\leq p\leq \frac{n}{2}$. From the refined Kato's inequality (cf. \cite{CGH00}), we have:
$$|\na u|^{2}\geq \frac{n-p+1}{n-p}|\na|u||^{2}.$$
For any $p\leq\min \{n-k,\frac{n}{2}\}$, substituting the Kato's inequality, we derive:
	\begin{equation*}
	\begin{split}
	0=&\|\na u\|^{2}_{L^{2}}+\int_{M}g(\mathfrak{Ric}(u),u)\\
\geq&\left(1+\frac{1}{n-p}\right) \|\na |u|\|^{2}_{L^{2}}-p(n-p)\frac{C(N,k)}{\la(g)}\|\mathcal{Z}_{g}\|_{L^{ \frac{n}{2} }}\frac{4(n-1)}{n-2}\|\na |u|\|^{2}_{L^{2}}\\
	&+\int_{M}R_{g}|u|^{2}\left(\frac{p(n-p)}{n(n-1)}-p(n-p)\frac{C(N,k)}{\la(g)}\|\mathcal{Z}_{g}\|_{L^{ \frac{n}{2} }}\right).\\
\geq&\bigg{(}\frac{p(n-p)}{n(n-1)}-p(n-p)\frac{C(N,k)}{\la(g)}\|\mathcal{Z}_{g}\|_{L^{ \frac{n}{2} }} \bigg{)}\bigg{(}\frac{4(n-1)}{n-2}\|\na |u|\|^{2}_{L^{2}}+\int_{M}R_{g}|u|^{2}\bigg{)}\\
&+\left(1+\frac{1}{n-p}-\frac{4(n-1)}{n-2}\frac{p(n-p)}{n(n-1)}\right)\|\na |u|\|^{2}_{L^{2}}\\
\geq&\bigg{(}\frac{p(n-p)}{n(n-1)}-p(n-p)\frac{C(N,k)}{\la(g)}\|\mathcal{Z}_{g}\|_{L^{ \frac{n}{2} }}\bigg{)}\la(g)\|u\|^{2}_{L^{\frac{2n}{n-2} } }+\left(1+\frac{1}{n-p}-\frac{4p(n-p)}{n(n-2)}\right)\|\na |u|\|^{2}_{L^{2}}.  
	\end{split}
	\end{equation*}
where the last inequality comes from the repeated use of the conformal  Sobolev inequality \eqref{confSobo} together with the  curvature condition 
	$$\|\mathcal{Z}_{g}\|_{L^{ \frac{n}{2} }}<\frac{\la(g)}{n(n-1)C(N,k)}.$$
	
By Lemma \ref{L5}, the coefficients of the terms $\|u\|^{2}_{L^{\frac{2n}{n-2} } }$ and $\|\na |u|\|^{2}_{L^{2}}$ are strictly positive for any $1\leq p\leq\min \{n-k,\frac{n}{2}-2\}$. Therefore, we conclude that $u=0$, i.e., $b_{p}(M)=0$. Hence,
\begin{itemize}
	\item[(1)] For $1\leq k\leq \lceil\frac{n}{2}\rceil$, the Betti numbers satisfy
	$b_{p}(M)=0 $ for all $1\leq p\leq \lceil\frac{n}{2}\rceil-2$.
 	\item[(2)] For $ \lceil\frac{n}{2}\rceil+2\leq k\leq n-1$, the Betti numbers satisfy
 	$b_{p}(M)=0 $ for all $1\leq p\leq\min\{n-k,\lceil\frac{n}{2}\rceil-2\}$.
\end{itemize}
\end{proof}
\begin{proof}[\textbf{Proof of Corollary \ref{C3}}]
For any harmonic $p$-form $u$ with $1\leq p\leq \min\{n-k,\frac{n}{2}\} $, the following inequality can be derived from the proof of Theorem \ref{T11}: 
	\begin{equation*}
	\begin{split}
	0=&\|\na u\|^{2}_{L^{2}}+\int_{M}g(\mathfrak{Ric}(u),u)\\	
	\geq &\|\na |u|\|^{2}_{L^{2}}\bigg{(}(1+\frac{1}{n-p}-p(n-p)C(N,k)\|\mathcal{Z}_{g}\|_{L^{ \frac{n}{2} }}\frac{1}{\la(g)}\frac{4(n-1)}{n-2}\bigg{)}\\
	&+\int_{M}R_{g}|u|^{2}\bigg{(}\frac{p(n-p)}{n(n-1)}-p(n-p)C(N,k)\|\mathcal{Z}_{g}\|_{L^{ \frac{n}{2} }}\frac{1}{\la(g)} \bigg{)}\\
	\end{split}
	\end{equation*}	
By $R_{g}>0$ and the given curvature conditions, the coefficients of both terms are strictly positive for all $p\leq \min\{n-k,\frac{n}{2}\}$. Therefore, we conclude that $u=0$, i.e., $b_{p}(M)=0$. 
\end{proof}
\subsection{Harmonic Weyl Curvature and integral gap}
For a closed Riemannian manifold with harmonic Weyl curvature, Tran \cite{Tra17} derived an integral gap result in dimensions higher than $5$. He used new Bochner-Weitzenb\"{o}ck-Lichnerowicz type formulas for the Weyl tensor, which are generalizations of identities in dimension $4$. Based on the  method of controlling the curvature term of the Lichnerowicz Laplacian on tensors as in \cite{PW21a,PW21b,PW22}, we also  show an integral gap result for harmonic Weyl curvature in this section.

	A Riemannian manifold $(M,g)$ is said to have harmonic Weyl curvature if its Weyl tensor satisfies the divergence-free condition:
	$$\delta\mathcal{W}_{g}=0.$$
	Since the divergence-free Weyl tensors satisfy the second Bianchi identity, Petersen and Wink \cite{PW22} established a Weitenb\"{o}ck formula for  harmonic Weyl curvature as follows.
\begin{proposition}(\cite[Proposition 2.2]{PW22})\label{P7}
	Let $(M,g)$ be a Riemannian manifold. If the Weyl curvature $\mathcal{W}_{g}$ is divergence free, then $\mathcal{W}_{g}$ satisfies the second Bianchi identity and
	$$\na^{\ast}\na\mathcal{W}_{g}+\frac{1}{2}\mathfrak{Ric}(\mathcal{W}_{g})=0.$$
\end{proposition}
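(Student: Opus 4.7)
The plan is to establish the two conclusions in sequence: first, that $\mathcal{W}_g$ satisfies the differential second Bianchi identity $\nabla_{[a}\mathcal{W}_{bc]de}=0$; and second, that this identity, combined with the divergence-free hypothesis, yields the Weitzenb\"ock formula via a contract-and-commute argument.

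For the first part, I would start from the decomposition (\ref{E1}), $Rm = S_g\odot g + \mathcal{W}_g$, and differentiate. The Riemann tensor always obeys the second Bianchi identity $\nabla_{[a}Rm_{bc]de}=0$, so it suffices to show $\nabla_{[a}(S_g\odot g)_{bc]de}=0$. Expanding the Kulkarni--Nomizu product, the cyclic antisymmetrization in $(a,b,c)$ collapses onto the Cotton tensor $C_{abc}:=\nabla_a S_{bc}-\nabla_b S_{ac}$ of the Schouten tensor. The classical identity $(\delta\mathcal{W})_{bcd}=(3-n)C_{dbc}$, itself obtained by contracting $\nabla_{[a}Rm_{bc]de}=0$ and plugging in the decomposition, translates the hypothesis $\delta\mathcal{W}_g=0$ into $C\equiv 0$, i.e. $S_g$ is a Codazzi tensor. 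This makes the cyclic expression vanish, and therefore $\nabla_{[a}\mathcal{W}_{bc]de}=0$.

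For the second part, I would apply $\nabla^a$ to the identity $\nabla_a\mathcal{W}_{bcde}+\nabla_b\mathcal{W}_{cade}+\nabla_c\mathcal{W}_{abde}=0$. The first term yields the rough Laplacian $\nabla^*\nabla\mathcal{W}_{bcde}$ (up to sign). In the other two terms I would commute derivatives by the Ricci identity $[\nabla^a,\nabla_b]T$, separating each into (a) leftover factors of the form $\nabla_b(\nabla^a\mathcal{W}_{cade})$, which vanish by the divergence-free hypothesis together with the antisymmetries $\mathcal{W}_{abcd}=-\mathcal{W}_{bacd}=-\mathcal{W}_{abdc}$, and (b) curvature commutators contracted against each of the four slots of $\mathcal{W}$. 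Using the pair-symmetry $\mathcal{W}_{abcd}=\mathcal{W}_{cdab}$ and the first Bianchi identity to match these contributions, the surviving curvature terms assemble into exactly $\tfrac12\,\mathfrak{Ric}(\mathcal{W}_g)$ for the operator $\mathfrak{Ric}$ introduced at the start of Section 2.2. Rearranging gives the stated identity.

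The main obstacle will be the combinatorics of the second step: the four free slots of $\mathcal{W}$ produce a priori eight commutator curvature terms, and it is only through careful use of the algebraic curvature symmetries of $\mathcal{W}$ (in particular pair symmetry and the first Bianchi identity) that they collapse to the single term $\tfrac12\,\mathfrak{Ric}(\mathcal{W}_g)$. The factor $\tfrac12$ reflects the double counting of the leading index pair built into the definition of $\mathfrak{Ric}$ on $(0,4)$-tensors with curvature-type symmetries. The first step is comparatively routine, its only subtle input being the standard identification of $\delta\mathcal{W}$ with $(3-n)$ times the Cotton tensor, which converts the analytic divergence-free condition into the algebraic Codazzi property of $S_g$ that makes the Kulkarni--Nomizu piece disappear from the cyclic derivative.
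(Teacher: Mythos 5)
The paper does not actually prove this proposition: it is imported verbatim as a citation to \cite[Proposition 2.2]{PW22}, so there is no in-paper proof to compare against. What you have produced is a reconstruction of the argument one would give, and it is the right one in outline. Your first step is airtight as stated: the decomposition $Rm = S_g\odot g + \mathcal{W}_g$ together with the universal second Bianchi identity for $Rm$ reduces the claim to showing that the cyclic derivative of the Kulkarni--Nomizu part vanishes; expanding $\nabla_{[a}(S_g\odot g)_{bc]de}$ and grouping by metric factors shows it is a linear combination of Cotton tensor components $C_{\cdot\cdot\cdot}$, so it vanishes once $C\equiv 0$; and the identity $\delta\mathcal{W}_g = (n-3)C$ (sign depending on conventions) converts the hypothesis into exactly that, for $n\geq 4$ (for $n=3$, $\mathcal{W}_g\equiv 0$ and the statement is vacuous). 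Your second step --- contract the Bianchi identity in its first index, kill the $\nabla_b(\operatorname{div}\mathcal{W}_g)$ leftovers using $\delta\mathcal{W}_g=0$ and the antisymmetries, and match the Ricci-identity commutator terms against the definition of $\mathfrak{Ric}$ --- is also the standard mechanism. The one place your write-up is genuinely thin, as you yourself flag, is the bookkeeping that produces the precise coefficient $\tfrac12$ in front of $\mathfrak{Ric}(\mathcal{W}_g)$; this is not automatic and depends on the exact normalization of $\mathfrak{Ric}$ from Definition \ref{D1} and Section 2.2 (each of the two surviving commutator terms expands into four curvature contractions per slot, and only after using the full algebraic curvature symmetries does this match $\tfrac12$ of the double sum defining $\mathfrak{Ric}$). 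The cited source \cite{PW22} obtains the same formula through their general Lichnerowicz-Laplacian calculus for tensors obeying a Bianchi-type identity rather than by a direct index chase, which buys cleaner bookkeeping of that constant; your approach is more elementary but would require carrying out the full commutator computation to close the gap on the $\tfrac12$.
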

\begin{lemma}\label{L3}
	Let $(M,g)$ be a Riemannian manifold. Then the Weyl curvature obeys
	$$g(\mathfrak{Ric}(\mathcal{W}_{g}),\mathcal{W}_{g})=g(\mathfrak{R}(\mathcal{W}^{\mathfrak{so}}_{g}),\mathcal{W}^{\mathfrak{so}}_{g}),$$
	where $\mathcal{W}^{\mathfrak{so}}_{g}$ is defined as in Definition \ref{D1}.
\end{lemma}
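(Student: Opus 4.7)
The plan is to prove the identity at a point $p \in M$ by exploiting the natural identification $\La^2 T_pM \cong \mathfrak{so}(T_pM)$, under which a bivector $X \wedge Y$ corresponds to the skew endomorphism $\phi(X \wedge Y): Z \mapsto g(X,Z)Y - g(Y,Z)X$. With the induced inner product on $\La^2 T_pM$ one has $g(\phi(\eta) Z, W) = g(\eta, Z \wedge W)$, and the defining relation of $\mathfrak{R}$ becomes $Rm(X,Y) = \phi(\mathfrak{R}(X \wedge Y)) \in \mathfrak{so}(T_pM)$. This reduces Lemma \ref{L3} to a purely algebraic statement about the curvature term of the Lichnerowicz Laplacian acting on the $(0,4)$-tensor $\mathcal{W}_g$, which is the $(0,4)$-case of the general identity of Petersen--Wink \cite{PW21a}.

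First I would pick an orthonormal eigenbasis $\{L_\alpha\}_{\alpha=1}^{N}$ of $\La^2 T_pM \cong \mathfrak{so}(T_pM)$ with $\mathfrak{R}(L_\alpha) = \mu_\alpha L_\alpha$. Applying Definition \ref{D1} slot by slot in this basis gives the expansion
$$\mathcal{W}_g^{\mathfrak{so}}(X_1,\ldots,X_4) = \sum_\alpha (L_\alpha \mathcal{W}_g)(X_1,\ldots,X_4)\, L_\alpha.$$
Pairing with $\mathfrak{R}$ and summing over all orthonormal quadruples $X_\bullet$ immediately yields
$$g(\mathfrak{R}(\mathcal{W}_g^{\mathfrak{so}}), \mathcal{W}_g^{\mathfrak{so}}) = \sum_\alpha \mu_\alpha \, |L_\alpha \mathcal{W}_g|^2,$$
where $|L_\alpha \mathcal{W}_g|^2$ is the Frobenius square of the $(0,4)$-tensor obtained by the derivation action.

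For the left-hand side, I would decompose $Rm(X_i, e_j) = \phi(\mathfrak{R}(X_i \wedge e_j)) = \sum_\alpha \mu_\alpha\, g(X_i \wedge e_j, L_\alpha)\, \phi(L_\alpha)$, use the linearity of the derivation action, and invoke the closure identity $\sum_j g(X_i \wedge e_j, L_\alpha) e_j = L_\alpha X_i$ (valid because $\phi$ matches the inner products as above). This collapses the $j$-sum in the definition of $\mathfrak{Ric}$ to
$$\mathfrak{Ric}(\mathcal{W}_g)(X_\bullet) = \sum_{i,\alpha} \mu_\alpha \, (L_\alpha \mathcal{W}_g)(X_1,\ldots,L_\alpha X_i,\ldots,X_4).$$
Contracting with $\mathcal{W}_g(X_\bullet)$, summing over all $X_\bullet$, and running the derivation formula $(L_\alpha \mathcal{W}_g)(X_\bullet) = -\sum_i \mathcal{W}_g(X_1,\ldots,L_\alpha X_i,\ldots,X_4)$ in reverse on the $i$-sum, the remaining expression reassembles as $\sum_\alpha \mu_\alpha |L_\alpha \mathcal{W}_g|^2$, matching the right-hand side.

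The main obstacle is purely bookkeeping: keeping track of sign conventions in the identification $\phi$, the linearity of the derivation action in $L \in \mathfrak{so}$, and recognizing the Frobenius norm $|L_\alpha \mathcal{W}_g|^2$ after running the derivation formula backwards. The algebraic symmetries of $\mathcal{W}_g$ (skew in the first pair, skew in the second pair, pair-symmetry) play no special role beyond what is already used to view $\mathfrak{W}$ as a symmetric operator on $\La^2 T_pM$; the identity in fact holds for any $(0,k)$-tensor $T$ with $\mathcal{W}_g$ replaced by $T$, as established in \cite{PW21a}.
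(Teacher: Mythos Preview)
Your proposal is correct and is essentially the same approach as the paper's: the paper's proof is a one-line citation of \cite[Proposition 1.6]{PW21b}, and what you have written is precisely the computation underlying that proposition, specialized to the $(0,4)$-tensor $\mathcal{W}_g$. The only minor imprecision is in your last step: after reaching $\sum_{i,\alpha}\mu_\alpha\,(L_\alpha\mathcal{W}_g)(X_1,\ldots,L_\alpha X_i,\ldots,X_4)\,\mathcal{W}_g(X_\bullet)$, you cannot literally ``run the derivation formula in reverse on the $i$-sum'' since the $L_\alpha$ in the $i$-th slot sits inside $L_\alpha\mathcal{W}_g$, not inside $\mathcal{W}_g$; one first uses the skew-adjointness of $L_\alpha$ in the $i$-th slot to move it onto $\mathcal{W}_g$, and then the $i$-sum reassembles as $-(L_\alpha\mathcal{W}_g)(X_\bullet)$, giving $\sum_\alpha\mu_\alpha|L_\alpha\mathcal{W}_g|^2$ as claimed.
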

\begin{proof}
	This identity follows immediately from  \cite[Proposition 1.6]{PW21b}.
\end{proof}
Now we can prove the Theorem \ref{T1}. The proof of Case I is similar to Theorem \ref{T6}, and the proof of Case II is similar to Theorem \ref{T11}.
\begin{proof}[\textbf{Proof of Theorem \ref{T1}}]
	The key ingredients come from \cite[Lemma 2.2 and Proposition 2.5]{PW21a}, which yield the estimate:
	$$|L\mathcal{W}_{g}|^{2}\leq 8|\mathcal{W}_{g}|^{2}|L|^{2}=\frac{2}{n-1}|\mathcal{W}^{\mathfrak{so}}_{g}||L^{2}|$$
	for all $L\in\mathfrak{so}(TM)$ (or see \cite[Corollary 2.3]{PW22}). \\	
	\textbf{Case I}:	Under our curvature assumptions, we obtain a lower bound for the average of the smallest $\lfloor\frac{n-1}{2}\rfloor$ eigenvalues of the curvature operator
	$$\frac{1}{k}(ak-\sqrt{\frac{k(N-k)}{N}}|\mathcal{W}_{g}|).$$
	Applying Lemma \ref{L3} and \cite[Lemma 1.8]{PW21b}), we derive the inequality:
	\begin{equation*}
	\begin{split}
	g(\mathfrak{Ric}(\mathcal{W}_{g}),\mathcal{W}_{g})&=g(\mathfrak{R}(\mathcal{W}^{\mathfrak{so}}_{g}),\mathcal{W}^{\mathfrak{so}}_{g} )\\
	&\geq (a-\sqrt{\frac{1}{k}-\frac{1}{N} }|\mathcal{W}_{g}|)|\mathcal{W}^{\mathfrak{so}}_{g}|^{2}\\
	&=(a-\sqrt{\frac{1}{k}-\frac{1}{N} }|\mathcal{W}_{g}|)4(n-1)|\mathcal{W}_{g}|^{2}.
	\end{split}
	\end{equation*}
	Following Proposition \ref{P7}, we obtain the estimate:
	\begin{equation*}
	\begin{split}
	0&=\|\na\mathcal{W}_{g}\|^{2}_{L^{2}}+\frac{1}{2}\int_{M}g\left(\mathfrak{Ric}(\mathcal{W}_{g}),\mathcal{W}_{g}\right)\\
	&\geq \|\na \mathcal{W}_{g}\|^{2}_{L^{2}}+2(n-1)a\|\mathcal{W}_{g}\|^{2}_{L^{2}}-2(n-1)\sqrt{\frac{1}{k}-\frac{1}{N} }\int_{M}|\mathcal{W}_{g}|\cdot|\mathcal{W}_{g}|^{2}\\
	&\geq \|\na \mathcal{W}_{g}\|^{2}_{L^{2}}+2(n-1)a\|\mathcal{W}_{g}\|^{2}_{L^{2}}-2(n-1)\sqrt{\frac{1}{k}-\frac{1}{N} }\|\mathcal{W}_{g}\|_{L^{\frac{n}{2}}}\cdot\|\mathcal{W}_{g}\|^{2}_{L^{\frac{2n}{n-2}}}\\
	&\geq \|\na \mathcal{W}_{g}\|^{2}_{L^{2}}\left(1-\sqrt{\frac{N-k}{Nk} }\frac{8(n-1)\|\mathcal{W}_{g}\|_{L^{\frac{n}{2}}}}{n(n-2)a[Vol(g)]^{\frac{2}{n}}} \right)+2(n-1)\|\mathcal{W}_{g}\|^{2}_{L^{2}}\left(a-\sqrt{\frac{N-k}{Nk}}\frac{\|\mathcal{W}_{g}\|_{L^{\frac{n}{2}}}}{[Vol(g)]^{\frac{2}{n}}}\right).
	\end{split}
	\end{equation*}
	Consequently, if the Weyl curvature satisfies $$\bigg{(}\frac{\int_{M}|\mathcal{W}_{g}|^{\frac{n}{2}}}{Vol(g)}\bigg{)}^{\frac{2}{n}}<\min\{\sqrt{\frac{Nk}{N-k}}a,\frac{n(n-2)}{8(n-1)}\sqrt{\frac{Nk}{N-k}}a \},$$
	then we conclude that $\mathcal{W}_{g} \equiv 0$.\\
		\textbf{Case II}: By Proposition \ref{P8}, the average of the smallest $k := \lfloor \frac{n-1}{2} \rfloor$ eigenvalues of the curvature operator satisfies
		$$\frac{1}{k}\sum_{i=1}^{k}\mu_{i}\geq \frac{R_{g}}{n(n-1)}-\sqrt{\frac{k(N-k)}{N}}\left(\sqrt{\frac{ 1}{n-2}}|\overset{\circ}{Ric}_{g}|+|\mathcal{W}_{g}|\right).$$
		Applying Lemma \ref{L3} and \cite[Lemma 1.8]{PW21b}), we derive the inequality:
		\begin{equation*}
		\begin{split}
		g(\mathfrak{Ric}(\mathcal{W}_{g}),\mathcal{W}_{g})&=g(\mathfrak{R}(\mathcal{W}^{\mathfrak{so}}_{g}),\mathcal{W}^{\mathfrak{so}}_{g} )\\
		&\geq \left(\frac{R_{g}}{n(n-1)}-\sqrt{\frac{k(N-k)}{N}}\left(\sqrt{\frac{ 1}{n-2}}|\overset{\circ}{Ric}_{g}|+|\mathcal{W}_{g}|\right)\right)|\mathcal{W}^{\mathfrak{so}}_{g}|^{2}\\
		&=\bigg{(}\frac{R_{g}}{n(n-1)}-C(N,k)\mathcal{Z}_{g}\bigg{)}4(n-1)|\mathcal{W}_{g}|^{2}.
		\end{split}
		\end{equation*}
		Using the H\"older inequality and the conformal  Sobolev inequality \eqref{confSobo} for $|\mathcal{W}_{g}|$, we can get
		\begin{align*}
		\frac{1}{2}\int_{M}g(\mathfrak{Ric}(\mathcal{W}_{g}),\mathcal{W}_{g})=&2(n-1)\int_M\left(\frac{R_{g}}{n(n-1)}-C(N,k)\mathcal{Z}_{g}\right)|\mathcal{W}_{g}|^{2}.\\
		\geq& -2(n-1)C(N,k)\|\mathcal{Z}_{g}\|_{L^{ \frac{n}{2} }}\cdot\|\mathcal{W}_{g}\|^{2}_{L^{\frac{2n}{n-2}}}+\frac{2}{n}\int_{M}R_{g}|\mathcal{W}_{g}|^{2}\\
		\geq&-2(n-1)\frac{C(N,k)}{\la(g)}\|\mathcal{Z}_{g}\|_{L^{ \frac{n}{2} }}\left(\frac{4(n-1)}{n-2}\|\na|\mathcal{W}_{g}|\|^{2}_{L^{2}}+\int_{M}R_{g}|\mathcal{W}_{g}|^{2}\right)\\
		&+\frac{2}{n}\int_{M}R_{g}|\mathcal{W}_{g}|^{2}.
		\end{align*}
		For harmonic Weyl tensor, there is an improved Kato's inequality (cf.\cite{Bra00,Tra17})
		$$|\na\mathcal{W}_{g}|^{2}\geq\frac{n+1}{n-1}|\na|\mathcal{W}_{g}||^{2}.$$	
		Following Proposition \ref{P7}, we obtain the estimate:
		\begin{equation*}
		\begin{split}
		0=&\|\na\mathcal{W}_{g}\|^{2}_{L^{2}}+\frac{1}{2}\int_{M}g(\mathfrak{Ric}(\mathcal{W}_{g}),\mathcal{W}_{g})\\
		\geq&\frac{n+1}{n-1}\|\na |\mathcal{W}_{g}|\|^{2}_{L^{2}}+\frac{2}{n}\int_{M}R_{g}|\mathcal{W}_{g}|^{2}\\
		&-2(n-1)\frac{C(N,k)}{\la(g)}\|\mathcal{Z}_{g}\|_{L^{ \frac{n}{2} }}\left(\frac{4(n-1)}{n-2 }\|\na|\mathcal{W}_{g}|\|^{2}_{L^{2}}+\int_{M}R_{g}|\mathcal{W}_{g}|^{2}\right)\\
		\geq &\bigg{(}\frac{2}{n}-\frac{2(n-1)}{\la(g)}C(N,k)\|\mathcal{Z}_{g}\|_{L^{ \frac{n}{2} }}\bigg{)}\bigg{(}\frac{4(n-1)}{n-2}\|\na|\mathcal{W}_{g}|\|^{2}_{L^{2}}+\int_{M}R_{g}|\mathcal{W}_{g}|^{2}\bigg{)}\\
		&+\bigg{(}\frac{n+1}{n-1}-\frac{8(n-1)}{n(n-2)}\bigg{)}\|\na|\mathcal{W}_{g}|\|^{2}_{L^{2}}\\
		\geq &\bigg{(}\frac{2}{n}-\frac{2(n-1)}{\la(g)}C(N,k)\|\mathcal{Z}_{g}\|_{L^{ \frac{n}{2} }}\bigg{)}\|\mathcal{W}_{g}\|^{2}_{L^{\frac{2n}{n-2} }}+\bigg{(}\frac{n+1}{n-1}-\frac{8(n-1)}{n(n-2)}\bigg{)}\|\na|\mathcal{W}|_{g}\|^{2}_{L^{2}}\\
		\end{split}
		\end{equation*}
		where the last inequality comes from the repeated use of the conformal  Sobolev inequality \eqref{confSobo} together with the  curvature condition  $$ \bigg{(}\int_{M}(\sqrt{\frac{ 1}{n-2}}|\overset{\circ}{Ric}_{g}|+|\mathcal{W}_{g}|)^{\frac{n}{2}}\bigg{)}^{\frac{2}{n} }<c_{n}\la(g)$$
		with
		$$c(n)=\frac{1}{\sqrt{2n}(n-1)},$$
			Noting that
		\begin{equation*}
		\frac{n+1}{n-1}-\frac{8(n-1)}{n(n-2)}=\frac{n(n-2)(n-7)-8 }{n(n-1)(n-2)} .
		\end{equation*}	
		For any $n\geq8$, we have
		\begin{equation*}
		\frac{n+1}{n-1}>\frac{8(n-1)}{n(n-2)}.
		\end{equation*}	
		Therefore, we conclude that the coefficients of the terms $\|\mathcal{W}_{g}\|^{2}_{L^{\frac{2n}{n-2} } }$ and $\|\na |\mathcal{W}_{g}|\|^{2}_{L^{2}}$ are  strictly positive. 	Consequently, $\mathcal{W}_g \equiv 0$.

\end{proof}

\section*{Acknowledgements}
This work is supported by the National Natural Science Foundation of China No. 12271496 (Huang) and the Youth Innovation Promotion Association CAS, the Fundamental Research Funds of the Central Universities, the USTC Research Funds of the Double First-Class Initiative. 

\noindent\textbf{Data availability} {This manuscript has no associated data.}
\section*{Declarations}
\noindent\textbf{Conflict of interest} The author states that there is no conflict of interest.

\bigskip
\footnotesize

\end{document}